\numberwithin{equation}{section}
\theoremstyle{plain} 
\newtheorem{lem}{Lemma}[section]
\newtheorem{prop}[lem]{Proposition}
\newtheorem{thm}[lem]{Theorem}
\newtheorem{cor}[lem]{Corollary}
\newtheorem{question}[lem]{Question}
\newtheorem{remark}[lem]{Remark}
\theoremstyle{definition}
\newtheorem{defn}[lem]{Definition}
\newtheoremstyle{mytheorem}
{}
{}
{\it}
{\parindent}
{\bf}
{.}
{ }
{\thmnumber{#2.~}\thmname{#1}\thmnote{~\rm#3}}
\newtheoremstyle{myremark}
{}
{}
{\rm}
{\parindent}
{\bf}
{.}
{ }
{\thmnumber{#2.~}\thmname{#1}\thmnote{~\rm#3}}
\newtheoremstyle{myparagraph}
{}
{}
{\rm}
{\parindent}
{\bf}
{.}
{ }
{\thmnumber{#2.~}\thmname{#1}\thmnote{#3}}
\def\@secnumfont{\sc}
\def\section{\@startsection{section}{1}%
\z@{1.5\linespacing\@plus .2\linespacing}{.7\linespacing}%
{\normalfont\sc\centering}}
\def\ps@headings{\ps@empty
 \def\@evenhead{%
  \setTrue{runhead}%
  \normalfont\footnotesize
  \rlap{\thepage}\hfil
  \def\thanks{\protect\thanks@warning}%
  \leftmark{}{}\hfil}%
 \def\@oddhead{%
  \setTrue{runhead}%
  \normalfont\footnotesize\hfil
  \def\thanks{\protect\thanks@warning}%
  \rightmark{}{}\hfil \llap{\thepage}}%
\let\@mkboth\markboth}
\renewenvironment{proof}[1][\proofname]{\par
  \pushQED{\qed}%
  \normalfont \topsep6\p@\@plus6\p@\relax
  \trivlist
  \itemindent\normalparindent
  \item[\hskip\labelsep
    \bfseries
    #1\@addpunct{.}]\ignorespaces
}{%
  \popQED\endtrivlist\@endpefalse
}
\providecommand{\proofname}{Proof}
\newcommand{\R}{\mathbb{R}}
\newcommand{\N}{\mathbb{N}}
\newcommand{\Leb}{\mathscr{L}}
\newcommand{\Lip}{\mathrm{Lip}}
\newcommand{\Tan}{\mathrm{Tan}}
\newcommand{\Int}{\mathrm{Int}}
\newcommand{\dist}{\mathrm{dist}}
\newcommand{\dV}{d_V\kern-1pt}
\newcommand{\trait}[3]{\vrule width #1ex height #2ex depth #3ex}
\newcommand{\trace}{\mathchoice%
  {\mathbin{\trait{.12}{1.2}{.03}\trait{.8}{0.09}{0.03}}}
  {\mathbin{\trait{.12}{1.2}{.03}\trait{.8}{0.09}{0.03}}}
  {\mathbin{\hskip.15ex\trait{.09}{.84}{0.02}\trait{.56}{.07}{.02}}\hskip.15ex}
  {\mathbin{\trait{.07}{.6}{.01}\trait{.4}{.06}{.01}}}}
\def\glip#1.{{\bf L}(#1)} 
\def\glipdec#1,#2.{{\bf L}_{#2}(#1)}
\def\tang#1.{\setbox1=\hbox{$#1$\unskip}
{\normalfont\text{T}}_{\ifdim\wd1>0pt #1\else x,r_a\fi}}
\def\pcreatedst#1#2{\expandafter\def\csname #1dst\endcsname##1,##2.{#2(##1,##2)}
\expandafter\def\csname #1dstp\endcsname##1.{{#2}_{##1}}
\expandafter\def\csname #1dname\endcsname{#2}} 
\def\pcreatenrm#1#2#3{\expandafter\def\csname
  #1nrm\endcsname##1.{\left #3 ##1\right #3_{#2}} \expandafter\def\csname
  #1nrmname\endcsname{\left #3\,\cdot\,\right #3_{#2}}} 
\def\pcreateasymnrm#1#2#3#4{\expandafter\def\csname
  #1nrm\endcsname##1.{\left #3 ##1\right #4_{#2}} \expandafter\def\csname
  #1nrmname\endcsname{\left #3\,\cdot\,\right #4_{#2}}} 
\def\wder#1.{{\mathscr{X}}({#1})}
\def\wform#1.{{\mathscr{E}}({#1})}
\def\zahlen{{\mathbb Z}}
\def\real{{\mathbb{R}}}
\def\ball#1,#2.{B(#1,#2)} 
\def\clball#1,#2.{\bar B(#1,#2)} 
\def\sball#1,#2,#3.{B_{#1}(#2,#3)} 
\def\scball#1,#2,#3.{\bar B_{#1}(#2,#3)} 
\def\glip#1.{{\bf L}(#1)} 
\def\glipdec#1,#2.{{\bf L}_{#2}(#1)}
\def\pcreatedst#1#2{\expandafter\def\csname #1dst\endcsname##1,##2.{#2(##1,##2)}
\expandafter\def\csname #1dstp\endcsname##1.{{#2}_{##1}}
\expandafter\def\csname #1dname\endcsname{#2}} 
\def\pcreatenrm#1#2#3{\expandafter\def\csname
  #1nrm\endcsname##1.{\left #3 ##1\right #3_{#2}} \expandafter\def\csname
  #1nrmname\endcsname{\left #3\,\cdot\,\right #3_{#2}}} 
\def\pcreateasymnrm#1#2#3#4{\expandafter\def\csname
  #1nrm\endcsname##1.{\left #3 ##1\right #4_{#2}} \expandafter\def\csname
  #1nrmname\endcsname{\left #3\,\cdot\,\right #4_{#2}}} 
\def\wder#1.{{\mathscr{X}}({#1})}
\def\wform#1.{{\mathscr{E}}({#1})}
\def\mpush#1.{{#1}_{\sharp}} 
\def\lipalg#1.{{\rm Lip}_{\text{\normalfont b}}(#1)} 
\def\albrep#1.{{\mathcal A}_{#1}} 
\def\ccompact#1.{{C_c(#1)}}
\newcommand{\on}{\:\mbox{\rule{0.1ex}{1.2ex}\rule{1.1ex}{0.1ex}}\:}
\DeclareMathOperator\frags{Frag} 
\DeclareMathOperator\cl{cl} 
\DeclareMathOperator\dom{dom} 
\def\lebmeas#1.{\setbox1=\hbox{$#1$\unskip}{\mathcal L}^{\ifdim\wd1>0pt
    #1 \else 1 \fi}} 
\DeclareMathOperator\spt{spt} 
\def\locnorm#1,#2.{\left|{#1}\right|_{{#2},\text{\normalfont loc}}}
\def\lipfun#1.{{\rm Lip}(#1)} 
\def\makestep#1#2{\par\noindent\texttt{Step#1: #2.}}
\begin{document}

	%
\pagestyle{empty}
\pagestyle{myheadings}
\markboth%
{\underline{\centerline{\hfill\footnotesize%
\textsc{Andrea Marchese, Andrea Schioppa}%
\vphantom{,}\hfill}}}%
{\underline{\centerline{\hfill\footnotesize%
\textsc{Prescribed blowups}%
\vphantom{,}\hfill}}}

	%
\thispagestyle{empty}

~\vskip -1.1 cm

	%

\vspace{1.7 cm}

	%
{\Large\sl\centering
Lipschitz functions with prescribed blowups at many points
\\
}
\vspace{.6 cm}

	%
\centerline{\sc Andrea Marchese and Andrea Schioppa}

\vspace{.8 cm}

{\rightskip 1 cm
\leftskip 1 cm
\parindent 0 pt
\footnotesize

	%
{\sc Abstract.}
In this paper we prove generalizations of Lusin-type theorems for
gradients due to Giovanni Alberti, where we replace the Lebesgue
measure with any Radon measure $\mu$. We apply this to go beyond the
known result on the existence of Lipschitz functions which are
non-differentiable at $\mu$-almost every point $x$ in any direction
which is not contained in the decomposability bundle $V(\mu,x)$,
recently introduced by Alberti and the first author. More
precisely, we prove that it is possible to construct a Lipschitz
function which attains any prescribed \emph{admissible blowup} at
every point except for a closed set of points of arbitrarily small
measure. Here a function is an admissible blowup at a point $x$ if it
is null at the origin and it is the sum of a linear function on
$V(\mu,x)$ and a Lipschitz function on $V(\mu,x)^{\perp}$.
\par
\medskip\noindent
{\sc Keywords: } Lipschitz function, Radon measure, blowup, Lusin type approximation.

\par
\medskip\noindent
{\sc MSC (2010):} 26B05, 30L99, 41A30.
\par
}

%
%

\section{Introduction}

In \cite{alberti-lusin}, Alberti proved a ``Lusin type theorem for gradients'':
roughly speaking, given any Borel vectorfield $f$ on the Euclidean
space $\R^N$ one can find a $C^1$ function $g$ whose gradient
coincides with $f$ up to an exceptional open set of arbitrarily
small Lebesgue measure. In other words one can prescribe at many
points the (unique) blowup of a $C^1$ function in an arbitrary
(measurable) way. Rademacher Theorem, which states that Lipschitz
functions are differentiable almost everywhere, implies that, even if
one weakens the assumptions on $g$, requiring it only to be locally Lipschitz, Alberti's result is still the best possible: no other blowups than the linear ones can be prescribed on a set of points of positive measure; moreover, one cannot get rid of the small exceptional set without any further assumption on $f$, i.e.~in general one cannot find a Lipschitz function $g$ such that the measure of the set $\{Dg\neq f\}$ is zero. However a continuous function $g$ with such property can be found (see \cite{moonens-lusin}).

In the present paper, we prove a generalization of Alberti's result,
where the Lebesgue measure is replaced by any Radon measure. Since
Rademacher Theorem does not hold in general with respect to a Radon
measure and in particular it fails with respect to any singular
measure (see Theorem 1.14 of \cite{rindler-afree}), then the following
vague question is very natural in our setting. Given a measure $\mu$
on $\R^N$, which blowups is it possible to prescribe for a Lipschitz
function, at many points with respect to $\mu$, besides the linear ones?\\

Let us introduce some basic notations to make the question more precise. We denote by $B(x,r)$ the ball with center $x\in\R^N$ and radius $r>0$. We simply write $B$ for the unit ball centred at the origin.

\begin{defn}[Blowups of a Lipschitz function]\label{def_bu}
Given a Lipschitz function $g$ defined on an open subset $\Omega\subset\R^N$, and a point $x\in \Omega$, we denote by $\Tan(g,x)$ the set of all the possible limits, with respect to the uniform convergence,
$$\lim_{j\to\infty}\tang {x,r_j}.f,$$
where $r_j\searrow 0$ and for every $r\leq \dist(x,\Omega^c), \tang {x,r}.f:B\to\R$ is defined by
$$\tang {x,r}.f(y):=r^{-1}(f(x+ry)-f(x)), \quad\text{for every }y\in B.$$

\end{defn}

\begin{defn}[Prescribing blowups]\label{def_presc}
Let $\mu$ be a positive Radon measure on an open set $\Omega\subset \R^N$. Denote by ${\rm{Lip}}(B,0)$ the space of Lipschitz functions on $B$ which vanish at the origin, endowed with the supremum distance, and let $f:\Omega\subset\R^N\to {\rm{Lip}}(B,0)$ be a Borel function. We say that $f$ {\it{prescribes the blowups}} of a Lipschitz function with respect to $\mu$:
\begin{itemize}
\item[(i)] {\it{Weakly}}, if there exists a Lipschitz function $g:\Omega\to\R$ such that $f(x)\in \Tan(g,x)$ for $\mu$-a.e. $x\in\Omega$;
\item[(ii)] {\it{Weakly in the Lusin sense}}, if for every $\varepsilon>0$ there exists a Lipschitz function $g:\Omega\to\R$ such that $$\mu(\{x\in\Omega:f(x)\not\in \Tan(g,x)\})<\varepsilon;$$
\item[(iii)] {\it{Strongly in the Lusin sense}}, if for every $\varepsilon>0$ there exists a Lipschitz function $g:\Omega\to\R$ such that $$\mu(\{x\in\Omega:\{f(x)\}\neq \Tan(g,x)\})<\varepsilon.$$
\end{itemize}  
\end{defn}

In this paper we mainly address the following question.
\begin{question}\label{q:1}
Given a positive Radon measure $\mu$ on an open set $\Omega\subset\R^N$ with $\mu(\Omega)<\infty$, for which choice of $f$ is it possible to say that $f$ prescribes the blowups of a Lipschitz function wrt $\mu$ weakly/strongly/in the Lusin sense?
\end{question}

As we already observed, when $\mu$ is the Lebesgue measure, Rademacher Theorem is a constraint on the possible choices of a function $f$ for which Question \ref{q:1} may have a positive answer. Namely, in this case, for a.e. point $x$, the corresponding function $f(x)$ must be linear (more precisely, the restriction to $B$ of a linear function). Using the notation that we have introduced above, the content of Alberti's result, or at least part of it, can be rephrased as follows: if $\Omega\subset \R^N$ is an open set with finite Lebesgue measure, then every Borel function $f:\Omega\to{\rm{Lip}}(B,0)$ whose values are (restrictions to $B$ of) linear functions almost everywhere, prescribes the blowups of a Lipschitz function wrt the Lebesgue measure strongly in the Lusin sense.

For a general measure $\mu$ Rademacher Theorem does not hold. In
particular De Philippis and Rindler in \cite{rindler-afree} completed the proof, also based on other works, that there are Lipschitz functions which are non-differentiable at $\mu_{\text{sing}}$-a.e. point, where $\mu_{\text{sing}}$ is the singular part of $\mu$ wrt Lebesgue. Nevertheless a suitable weaker version of Rademacher Theorem holds. Indeed, letting ${\rm{Gr}}(\R^N)$ denote the union of the Grasmannians of all vector subspaces of $\R^N$, in \cite{alberti-marchese} it is proved that to every Radon measure $\mu$ on $\R^N$ it is possible to associate a Borel function $V(\mu,\cdot):\R^N\to{\rm{Gr}}(\R^N)$ called the \emph{decomposability bundle} of $\mu$, with the property that for every Lipschitz function $g$, the restriction of $g$ to the affine subspace $x+V(\mu,x)$ is differentiable at $\mu$-a.e. point $x$ and moreover the bundle is maximal with respect to this property, meaning that there exists a Lipschitz function which is non-differentiable at $\mu$-a.e. point $x$ along any direction which is not in $V(\mu,x)$. In virtue of \cite[Theorem 1.1 (ii)]{alberti-marchese}, the proof that Rademacher Theorem does not hold for singular measures reduces to proving that every singular measure on $\R^N$ has decomposability bundle of dimension at most $N-1$. This is achieved in \cite{rindler-afree} using a characterization of the decomposability bundle given in \cite[Theorem 6.4]{alberti-marchese}.

Clearly the main result of \cite{alberti-marchese} is also a constraint on the possible choices of a function $f$ for which one can expect a positive answer to Question \ref{q:1}, indeed one should at least require that $f(x)$ is linear on $V(\mu,x)$ for $\mu$-a.e. point $x$. This observation partially motivates the introduction of the following subset of ${\rm{Lip}}(B,0)$. Given a vector subspace $V$ of $\R^N$ and a point $y\in\R^N$ we denote respectively $y_V$ and $y_{V^\perp}$ the projections on $V$ and on its orthogonal complement $V^\perp$. Finally we denote the class of \emph{admissible blowups} by
\begin{equation}\label{e:admissible}
C(\mu,x):=\{h\in{\rm{Lip}}(B,0):h(y)=L(y_{V(\mu,x)})+m(y_{V(\mu,x)^\perp})\},
\end{equation}
where $L$ is a linear function on $V(\mu,x)$ and $m$ is a Lipschitz
function on $V(\mu,x)^\perp$. 

Now we are ready to state the main results of the paper.

\begin{thm}\label{main}
Let $\mu$ be a Radon measure on $\R^N$, let $\Omega\subset\R^N$ be an open set with $\mu(\Omega)<\infty$, and let $f$ be as in Definition \ref{def_presc}. Then the following statements hold:
\begin{itemize}
\item[(I)] if $f(x)=L(x)$ at $\mu$-a.e $x$, where $L(x)$ is the restriction to $B$ of a linear function, then $f$ prescribes the blowups of a Lipschitz function with respect to $\mu$ strongly in the Lusin sense;
\item[(II)] if $f(x)\in C(\mu,x)$ for $\mu$-a.e. $x$, then $f$ prescribes the blowups of a Lipschitz function with respect to $\mu$ weakly in the Lusin sense.
\end{itemize}
\end{thm}
In Section \ref{s6} we exhibit a measure $\mu$ for which one cannot prescribe more blowups that those contained in the class $
C(\mu,\cdot)$, proving the sharpness of (II). For $N=1$ we can prove a stronger statement. In particular we don't need the restriction that $\Omega$ has finite measure. Firstly we show that the only measures for which one can prescribe strongly some non-linear blowups are the atomic ones. Secondly we prove that any blowup can be prescribed weakly wrt a singular measure $\mu$. More precisely, for the typical $1$-Lipschitz function $g$ (in the sense of Baire categories), $\Tan(g,x)$ coincides with the set of all $1$-Lipschitz functions in ${\rm{Lip}}(B,0)$, at $\mu$-a.e. point $x$.\\ 

Given a Borel set $E$, we denote by $\mu\trace E$ the measure defined by
$$\mu\trace E(A):=\mu(A\cap E),$$
for every Borel set $A$.

\begin{thm}\label{main1}
Let $\Omega\subset\R$ be an open set, and let $\mu$ and $f$ be as in Definition \ref{def_presc}. Then the following statements hold:
\begin{itemize}
\item[(I)] $f$ prescribes the blowups of a Lipschitz function with respect to $\mu$ strongly in the Lusin sense, if and only if $f(x)$ is the restriction to $B$ of a positively homogeneous function, for $\mu$-a.e. $x$ and $\mu\trace NL$ is atomic, where $$NL:=\{x\in\Omega: f(x)\; \mbox{is not the restriction to}\; B\; \mbox{of a linear function}\};$$
\item[(II)] if $\mu$ is singular, then $f$ prescribes the blowups of a Lipschitz function with respect to $\mu$ weakly;
\end{itemize}
\end{thm}

\begin{remark}\label{rmk1}\rm{
\begin{itemize}
\item[(i)] Statement (I) in Theorem \ref{main} is a generalization of
  Theorem 1 of \cite{alberti-lusin}. In Section \ref{s3} we prove a
  more precise version of this statement, including the possibility to
  choose the Lipschitz function $g$ in point (iii) of Definition
  \ref{def_presc} of class $C^1$, with arbitrarily small $L^{\infty}$
  norm and with $L^p$ estimates on its gradient for every
  $p\in[1,\infty]$. A similar result was recently proved by David in
  \cite{david-lusin} in the setting of PI spaces: a class of metric
  measure spaces which admit a differentiable structure. We point out
  that statement (I) can also be extended to doubling metric measure
  spaces, where the differentiable structure is defined using
  operators called derivations: we will not pursue this issue in the present paper.
\item[(ii)] The difference between statement (II) of Theorem \ref{main1} and statement (II) of Theorem \ref{main} is twofold. Firstly, in the 1-dimensional case there is no restriction on the function $f$, due to the fact that the decomposability bundle of a singular measure in $\R$ is always trivial. Secondly, we remark that the blowups in the 1-dimensional case are prescribed weakly, while in the general case are prescribed only weakly in the Lusin sense. More precisely, in dimension $N=1$ we are able to prove that residually many 1-Lipschitz functions attain, in a set of full measure, every 1-Lipschitz function in ${\rm{Lip}}(B,0)$ as a blowup. 
\item[(iii)] Statement (I) of Theorem \ref{main1} is a simple observation, which is already contained in Proposition 4.2 of \cite{Ma}. The restriction to the family of positively homogeneous functions is clearly necessary in order to prescribe blowups strongly, because if $f\in \Tan(g,x)$ and $h\in \Tan(f,0)$, then it also holds $h\in \Tan(g,x)$. Presumably, also in dimension larger than 1 the possibility to prescribe strongly some non-linear blowups in the Lusin sense should depend on some property of the measure and intuitively it should fail when the measure is ``very diffused''. On the other side, it sounds reasonable that if the measure $\mu$ is supported on a $k$-rectifiable set $E$ in $\R^N$ ($k<N$) then any Borel function $f$ which is $\mu$-a.e. positively homogeneous and linear along the tangent bundle to $E$ prescribes the blowups of a Lipschitz function strongly in the Lusin sense. However we do not pursue this issue in this paper.
\end{itemize}}
\end{remark}

\subsection*{On the structure of the paper} 
The proof of Theorem \ref{main} is split in Section \ref{s3} for statement (I) and Section \ref{s5}, for statement (II). In Section \ref{s6} we provide an example of a measure $\mu$ for which every blowup of a Lipschitz function is the sum of a linear function on $V(\mu,x)$ and a Lipschitz function on its orthogonal, at $\mu$-a.e. point $x$, in order to justify the choice of the class $C(\mu,\cdot)$ appearing in statement (II) of Theorem \ref{main}. Finally, in Section \ref{s4}, we prove Theorem \ref{main1}. 

\subsection*{Acknowledgements} 
The authors would like to thank Giovanni Alberti for several discussions. A. M. was supported by the ERC-grant ``Regularity of
area-minimizing currents'' (306247). A.S. was supported
by the ``ETH Zurich Postdoctoral Fellowship Program and the Marie
Curie Actions for People COFUND Program''.

\section{Proof of Theorem \ref{main}(I)}\label{s3}
In this section we prove statement (I) of Theorem \ref{main}. As anticipated in Remark \ref{rmk1} (i) we will actually prove a stronger statement including some gradient estimates. In particular $L^{\infty}$ estimates on the function $g$ of Definition \ref{def_presc} and its gradient are necessary to prove part (II) of Theorem \ref{main}. The proof of statement (I) is very similar to the one presented in \cite{alberti-lusin}. The new main technical ingredient is Corollary \ref{cor:neg_frames}.
\begin{thm}\label{mainI}
Let $\mu$ be a Radon measure on $\R^N$. Let $\Omega\subset\R^N$ open with $\mu(\Omega)<\infty$. Then for every Borel map $f:\Omega\to \R^N$ and for every
  $\varepsilon, \zeta>0$ there exist a compact set $K$ and a function $g\in C^1_c(\Omega)$ with $\|g\|_{\infty}\leq\zeta$ such
  that
\begin{equation}\label{e_tmain1}
\mu(\Omega\setminus K)<\varepsilon\mu(\Omega).
\end{equation}
\begin{equation}\label{e_tmain2}
Dg(x)=f(x), \quad\text{for every } x\in K.
\end{equation}
Moreover, there exists $C=C(N)$ such that
\begin{equation}\label{e_tmain3}
\|Dg\|_p\leq C\varepsilon^{1/p-1}\|f\|_p,\quad\text{for every } p\in[1,\infty],
\end{equation}
where $\|\cdot\|_p$ denotes the usual norm in $L^p(\Omega,\mu)$.
\end{thm}
\def\cubes#1.{{\text{\normalfont
      B}}_{\zahlen}\setbox1=\hbox{$#1$\unskip}
\ifdim\wd1>0pt (#1)\fi}
\def\bxset#1,#2.{\text{\normalfont Bx}\left(
    \setbox1=\hbox{$#1$\unskip}\setbox2=\hbox{$#2$\unskip}
    \ifdim\wd1>0pt #1 \else x\fi,
\ifdim\wd2>0pt#2 \else r\fi\right)
} 
\def\fxset#1,#2,#3.{\text{\normalfont Fr}\left(
    \setbox1=\hbox{$#1$\unskip}\setbox2=\hbox{$#2$\unskip}
    \setbox3=\hbox{$#3$\unskip}
    \ifdim\wd1>0pt #1 \else x\fi,
\ifdim\wd2>0pt#2 \else r\fi,\ifdim\wd3>0pt#3 \else \varepsilon\fi\right)
} 
Let $\cubes.$ denote the collection of boxes in $\real^N$ of the form
$\prod_{i=1}^N[2n_i-1, 2n_i+1]$ for $(n_i)_{i=1}^N\in\zahlen^N$. For
$r>0$ let $\cubes r.$ denote the transform of $\cubes.$ when the
dilation $x\mapsto rx$ is applied to $\real^N$.
For $x\in\real^N$ and $r>0$ we denote the box with center $x$ and edge length $2r$ by
\begin{equation}
  \label{eq:bxdef}
  \bxset ,.:=\left\{y\in\real^N:\max_{1\le i\le N}|x_i-y_i|\le r\right\},
\end{equation}
and for $0<\varepsilon<1$ we define the ``frame'':
\begin{equation}
  \label{eq:fxdef}
  \fxset ,,.:=\left\{ y\in\bxset,.:\text{
      $|x_i-y_i|\ge(1-\varepsilon)r$ for some $i\in\{1,\cdots,N\}$}
\right\}.
\end{equation}
Finally we consider the probability measure $P:=\frac{1}{(2r)^N}\lebmeas N.\on\bxset 0,.$ and for $\omega\in\bxset 0,r.$ we let $\cubes r,\omega.:=\cubes
r.+\omega$.
\def\gdec{_{\varepsilon,\omega}^{\normalfont\text{good}}}
\def\bdec{_{\varepsilon,\omega}^{\normalfont\text{bad}}}
\def\bodec{_{\normalfont\text{bad}}}
\begin{lem}[Existence of boxes with negligible frames]
  \label{lem:neg_frames}
  Let $K$ be compact with $\mu(K)>0$ and $U\supset K$ open with
  $\mu(U)\le\frac{3}{2}\mu(K)$. Assume that $r>0$ is such that for
  each $\bxset ,.$ which intersects $K$ one has $\bxset ,.\subset
  U$. For $\varepsilon>0$ and $\omega\in\bxset 0,r.$ define
  \begin{equation}
    \label{eq:neg_frames_s1}
\begin{split}
    B\gdec := \biggl\{
        \bxset ,.\in&\cubes r,\omega.: \bxset ,.\cap K\ne\emptyset\\
&\text{and $\mu(\fxset ,,.) \le 16N2^N\varepsilon\mu(\bxset ,.)$}
\biggr\}.
\end{split}
\end{equation}
Then for some $\omega\in\bxset 0,r.$ one has:
\begin{equation}
  \label{eq:neg_frames_s2}
  \mu(\bigcup B\gdec\cap K)\ge\frac{3}{4}\mu(K).
\end{equation}
\end{lem}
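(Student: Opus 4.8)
The plan is to run a first-moment (averaging) argument over the translation parameter $\omega\in\bxset 0,r.$ against $P$, as in the classical Lebesgue case. Alongside $B\gdec$, introduce the family $B\bdec$ of cubes $Q=\bxset x,r.\in\cubes r,\omega.$ that meet $K$ but whose frame is \emph{not} small, i.e.\ $\mu(\fxset x,r,\varepsilon.)>16N2^N\varepsilon\,\mu(\bxset x,r.)$; thus every cube of $\cubes r,\omega.$ meeting $K$ lies in exactly one of $B\gdec$, $B\bdec$. Since $\cubes r,\omega.$ covers $\real^N$, every point of $K$ lies in some cube of $\cubes r,\omega.$, which necessarily meets $K$, so $K\subseteq(\bigcup B\gdec)\cup(\bigcup B\bdec)$ and $\mu(K)\le\mu(\bigcup B\gdec\cap K)+\mu(\bigcup B\bdec\cap K)$. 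Hence \eqref{eq:neg_frames_s2} will follow once we produce one $\omega$ with $\mu(\bigcup B\bdec\cap K)\le\tfrac14\mu(K)$. (We may assume $\varepsilon<1$; if $\varepsilon\ge1$ each frame is the whole cube and $16N2^N\varepsilon\ge1$, so every cube is good and there is nothing to prove.)

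\emph{Step 1: a pointwise estimate for fixed $\omega$.} For $Q=\bxset x,r.\in\cubes r,\omega.$ write $\Phi(Q):=\fxset x,r,\varepsilon.$ for its frame and put $\mathrm{Fr}_\omega:=\bigcup_{Q\in\cubes r,\omega.}\Phi(Q)$. Each $Q\in B\bdec$ meets $K$, so by the hypothesis on $r$ it --- and hence $\Phi(Q)$ --- lies in $U$; thus $\bigcup_{Q\in B\bdec}\Phi(Q)\subseteq\mathrm{Fr}_\omega\cap U$. A point of $\real^N$ belongs to at most $2^N$ cubes of $\cubes r,\omega.$, hence to at most $2^N$ of the $\Phi(Q)$, so $\sum_{Q\in B\bdec}\mathbf{1}_{\Phi(Q)}\le 2^N\,\mathbf{1}_{\mathrm{Fr}_\omega\cap U}$ pointwise; integrating against $\mu$ and using countable subadditivity together with the defining inequality of $B\bdec$ gives
\[
\mu\Bigl(\bigcup B\bdec\cap K\Bigr)\le\sum_{Q\in B\bdec}\mu(Q)\le\frac{1}{16N2^N\varepsilon}\sum_{Q\in B\bdec}\mu(\Phi(Q))\le\frac{1}{16N\varepsilon}\,\mu(\mathrm{Fr}_\omega\cap U).
\]

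\emph{Step 2: averaging and conclusion.} Fix $y\in\real^N$. As $\omega$ ranges over $\bxset 0,r.=[-r,r]^N$ with law $P$, the displacement of $y$ from the centre of the cube of $\cubes r,\omega.$ containing it equals, in the $i$-th coordinate, $2r$ times the reduction of $(y_i-\omega_i)/(2r)$ modulo $1$ into $[-\tfrac12,\tfrac12)$, and the $N$ coordinates are independent; hence this displacement is uniformly distributed on $[-r,r)^N$ and $P(\{\omega:y\in\mathrm{Fr}_\omega\})=1-(1-\varepsilon)^N\le N\varepsilon$. Integrating over $y\in U$ against $\mu$ (Tonelli) and using $\mu(U)\le\tfrac32\mu(K)$ yields $\int_{\bxset 0,r.}\mu(\mathrm{Fr}_\omega\cap U)\,dP(\omega)\le N\varepsilon\,\mu(U)\le\tfrac32 N\varepsilon\,\mu(K)<\infty$, so some $\omega\in\bxset 0,r.$ has $\mu(\mathrm{Fr}_\omega\cap U)\le\tfrac32 N\varepsilon\,\mu(K)$. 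For that $\omega$, Step 1 gives $\mu(\bigcup B\bdec\cap K)\le\tfrac{3}{32}\mu(K)<\tfrac14\mu(K)$, and therefore $\mu(\bigcup B\gdec\cap K)\ge\tfrac{29}{32}\mu(K)\ge\tfrac34\mu(K)$, which is \eqref{eq:neg_frames_s2}.

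Two routine points must be checked: the joint measurability of $(\omega,y)\mapsto\mathbf{1}_{\mathrm{Fr}_\omega}(y)$ (it is Borel, being assembled from fractional parts, so Tonelli applies), and the finiteness $\mu(U)\le\tfrac32\mu(K)<\infty$ (immediate since $K$ is compact and $\mu$ is Radon). The one genuinely delicate issue --- and the main obstacle to a naive argument --- is the overlap of the frames of neighbouring cubes along their shared faces: these faces are $\lebmeas N.$-null but may carry positive $\mu$-mass, so $\sum_{Q}\mu(\Phi(Q))$ cannot be bounded by $\mu(\mathrm{Fr}_\omega\cap U)$ directly and one has to pay the multiplicity factor $2^N$ --- which is precisely why $2^N$ appears in the threshold $16N2^N\varepsilon$ defining $B\gdec$. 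Apart from this bookkeeping, the scheme is the standard Alberti-type averaging.
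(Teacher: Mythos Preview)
Your proof is correct and rests on the same averaging-over-translations idea as the paper. The paper organizes the Fubini argument differently: it introduces the set $A_{\mathrm{bad}}=\{\omega:\mu(\bigcup B_\omega^{\mathrm{bad}})\ge\tfrac16\mu(U)\}$ and shows $P(A_{\mathrm{bad}})<1$ by estimating the double integral $\int\chi_{\bigcup\partial_\varepsilon B_\omega^{\mathrm{bad}}}(x)\,\chi_{A_{\mathrm{bad}}}(\omega)\,d\mu(x)\,dP(\omega)$ from above and below, then picks any $\omega\notin A_{\mathrm{bad}}$. You instead first bound $\mu(\bigcup B_\omega^{\mathrm{bad}}\cap K)\le(16N\varepsilon)^{-1}\mu(\mathrm{Fr}_\omega\cap U)$ pointwise in $\omega$ --- absorbing the $2^N$ overlap factor at this step --- and only then average the single quantity $\mu(\mathrm{Fr}_\omega\cap U)$, using the exact value $P(y\in\mathrm{Fr}_\omega)=1-(1-\varepsilon)^N\le N\varepsilon$ in place of the paper's cruder bound $2N2^N\varepsilon$. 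Your route is a bit more direct, avoids the auxiliary set $A_{\mathrm{bad}}$, and delivers the sharper constant $\tfrac{29}{32}$ in~\eqref{eq:neg_frames_s2}.
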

\begin{proof}
  We define the $\varepsilon$-boundaries of a family of boxes $G$
  \begin{equation}
    \label{eq:neg_frames_p1}
    \partial_\varepsilon G := \left\{
\fxset,,.:\bxset,.\in G
\right\},
\end{equation}
the set of bad boxes
\begin{equation}    \label{eq:neg_frames_p2}
\begin{split}
    B\bdec := \biggl\{
        \bxset ,.\in&\cubes r,\omega.: \bxset ,.\cap K\ne\emptyset\\
&\text{and $\mu(\fxset ,,.) > 16N2^N\varepsilon\mu(\bxset ,.)$}
\biggr\},
\end{split}
\end{equation}
and the set of bad $\omega$'s:
\begin{equation}
    \label{eq:neg_frames_p3}
A\bodec := \left\{\omega\in\bxset 0,r.: \mu(\bigcup B\bdec)\ge\frac{1}{6}\mu(U)\right\}.
\end{equation}  
The Lemma is proven by showing that $P(A\bodec)<1$. Define:
\begin{equation}
  \label{eq:neg_frames_p4}
  I := \int \chi_{\bigcup\partial_\varepsilon B\bdec}(x)\chi_{A\bodec}(\omega)\,d\mu(x)\,dP(\omega),
\end{equation}
where $\chi_E$ denotes the characteristic function of the set $E$, with values $0$ and $1$. We estimate $I$ from below integrating first in $d\mu(x)$:
\begin{equation}
\begin{split}
  \label{eq:neg_frames_p5}
  I &= \int\mu(\bigcup\partial_\varepsilon
  B\bdec)\chi_{A\bodec}(\omega)dP(\omega)\\
  &\ge 16N2^N\varepsilon\int\mu(\bigcup
  B\bdec)\chi_{A\bodec}(\omega)dP(\omega)\\
  &\ge\frac{16N2^N\varepsilon}{6}\mu(U)P(A\bodec).
\end{split}
\end{equation}
We estimate $I$ from above integrating first in $dP(\omega)$:
\begin{equation}
  \label{eq:neg_frames_6}
  I = \int P(\omega\in A\bodec: x\in\bigcup\partial_\varepsilon
  B\bdec)\,d\mu(x).
\end{equation}
For fixed $x$ the set $\{\omega:x\in\bigcup\partial_\varepsilon
  B\bdec\}$ has positive $P$-measure only if $x\in U$ and
  the one must also have $\omega\in\bigcup_{j=1}^{2^N}\fxset \tilde x_j,,.$ where the $\{\tilde x_j\}$ depend
  only on $x$. As the Lebesgue measure on ${\fxset \tilde
    x_j,r,\varepsilon.}$ is at most $2N(2r)^N\varepsilon$
  we get
  \begin{equation}
    \label{eq:neg_frames_p7}
    I \le \varepsilon(2N)2^N\mu(U)
  \end{equation}
and so $P(A\bodec)\le\frac{12}{16}<1$. Finally for $\omega\in A\bodec^c$ we
observe:
\begin{equation}
  \label{eq:neg_frames_p8}
  \mu(K\cap\bigcup B\gdec)\ge \mu(K)-\mu(\bigcup B\bdec)\ge\frac{3}{4}\mu(K).
\end{equation}
\end{proof}

By a standard covering argument, we deduce the following

\begin{cor}[Covering by good boxes]
\label{cor:neg_frames}
Let $\varepsilon>0$ and $r_0>0$; then for every open set $U\subset\R^N$ there is a sequence of
disjoint boxes $\{\bxset z_\lambda,r_\lambda.\}_\lambda$ contained in $U$ such that:
\begin{align}
  \label{eq:neg_frames_cs1}
  r_\lambda&\le r_0\\  \label{eq:neg_frames_cs2}
 \mu(\fxset z_\lambda,r_\lambda,\varepsilon.)&\le 16N2^N\varepsilon\mu(\bxset
  z_\lambda,r_\lambda.)\\  \label{eq:neg_frames_cs3}
 \mu(U\setminus\bigcup_\lambda\bxset z_\lambda,r_\lambda.)&=0.
\end{align}
\end{cor}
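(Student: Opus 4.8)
The plan is to derive Corollary \ref{cor:neg_frames} from Lemma \ref{lem:neg_frames} by a Vitali-type exhaustion argument, repeatedly harvesting a fixed positive fraction of the remaining mass at each stage. First I would reduce to the case $\mu(U)<\infty$ by writing $U$ as a countable disjoint union of bounded open pieces (e.g.\ intersecting with dyadic annuli), since the three conclusions \eqref{eq:neg_frames_cs1}--\eqref{eq:neg_frames_cs3} can be verified on each piece separately and the resulting families of cubes concatenated. So assume $\mu(U)<\infty$; I may also assume $\mu(U)>0$, the trivial case being empty.

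The core of the argument is a single extraction step. Given a compact $K\subset U$ with $\mu(K)>0$, I would first choose, using outer regularity of $\mu$, an open set $U'$ with $K\subset U'\subset U$ and $\mu(U')\le\frac32\mu(K)$; then pick $r>0$ small enough (at most $r_0$, and small enough that every cube $\bxset{},{}.$ of sidelength $2r$ meeting $K$ lies inside $U'$, which is possible since $\dist(K,\partial U')>0$). Lemma \ref{lem:neg_frames} now produces a shift $\omega\in\bxset0,r.$ such that the good cubes $B\gdec$ — which are pairwise disjoint, are contained in $U'\subset U$, have sidelength $2r\le 2r_0$, and satisfy the frame estimate \eqref{eq:neg_frames_cs2} — cover at least $\frac34\mu(K)$ of $K$. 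Finitely many of these good cubes already capture, say, more than $\frac12\mu(K)$ of $K$; call this finite disjoint family $\mathcal{F}_1$ and set $S_1=\bigcup\mathcal{F}_1$, so $\mu(S_1)\ge\frac12\mu(K)\ge\frac12\mu(U)$ if we start from a compact $K$ with $\mu(K)\ge(1-\delta)\mu(U)$ via inner regularity.

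I would then iterate: having chosen disjoint good cubes $\mathcal{F}_1,\dots,\mathcal{F}_n$ with $S_n=\bigcup_{i\le n}\bigcup\mathcal{F}_i$, apply the above extraction step inside the open set $U\setminus\overline{S_n}$ (note the cubes in $\mathcal{F}_i$ are closed, so their union is compact and the complement in $U$ is open) to a compact subset carrying at least $(1-\delta)$ of the mass $\mu(U\setminus\overline{S_n})$, obtaining a new finite disjoint family $\mathcal{F}_{n+1}$ of good cubes, contained in $U\setminus\overline{S_n}$ hence disjoint from all previous cubes, with $\mu(\bigcup\mathcal{F}_{n+1})\ge\tfrac12\mu(U\setminus\overline{S_n})$. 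Writing $m_n=\mu(U\setminus S_n)$, each step gives $m_{n+1}\le m_n-\tfrac12(1-\delta)m_n = \theta\, m_n$ with $\theta=1-\tfrac12(1-\delta)<1$ (using $\mu(\overline{S_n})=\mu(S_n)$ since the cubes' topological boundaries are covered by their frames, which — being Lebesgue-null in the relevant lower dimension, or simply by choosing $r$ generically — carry no extra mass; alternatively one just works with the open interiors throughout). Hence $m_n\to0$. Taking $\{\bxset z_\lambda,r_\lambda.\}_\lambda=\bigcup_n\mathcal{F}_n$ gives a countable disjoint family satisfying \eqref{eq:neg_frames_cs1} and \eqref{eq:neg_frames_cs2} by construction, and $\mu(U\setminus\bigcup_\lambda\bxset z_\lambda,r_\lambda.)\le\lim_n m_n=0$, which is \eqref{eq:neg_frames_cs3}.

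The step I expect to require the most care is the bookkeeping around the boundaries of the already-chosen cubes: to run the induction I need the region still to be filled to be \emph{open} (so that Lemma \ref{lem:neg_frames} applies and so that small cubes meeting a compact subset stay inside it), yet I also need to ensure that discarding the boundaries of previously selected cubes costs no mass — otherwise the geometric decay $m_{n+1}\le\theta m_n$ could stall. This is handled by observing that at each stage only countably many cube sidelengths have been used, so for all but countably many choices of the new scale $r$ the grid hyperplanes avoid any atoms sitting on old boundaries; more robustly, one simply notes $\partial(\bxset z,r.)\subset\fxset z,r,\varepsilon.$ and absorbs the frame mass into the error, which is already controlled by \eqref{eq:neg_frames_cs2}. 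Everything else is the standard disjointification and countable concatenation, which I would not belabor.
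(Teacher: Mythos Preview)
Your proposal is correct and is precisely the kind of ``standard covering argument'' the paper invokes without details: iterated application of Lemma~\ref{lem:neg_frames} to exhaust the mass of $U$ geometrically, with inner/outer regularity and a small-scale choice of $r$ to keep cubes inside the current open remainder. The boundary bookkeeping you flag is in fact a non-issue here, since each $S_n$ is a finite union of \emph{closed} cubes, hence already closed, so $U\setminus S_n$ is open and no mass is lost; the only residual subtlety is that ``disjoint'' in the statement should be read as ``with pairwise disjoint interiors'' (grid cubes from the same stage may share faces), which is the intended meaning throughout the paper.
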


To prove Theorem \ref{mainI} it is sufficient to perform a straightforward iteration of Lemma \ref{lemma_base} below. For the proof of the lemma, after we have established Corollary \ref{cor:neg_frames}, we can easily adapt the proof given in \cite{alberti-lusin}.

\begin{lem}\label{lemma_base}
Let $\Omega$ be an open subset of $\R^N$ with finite measure $\mu$. Let $f:\Omega\to\R^N$ be a bounded and continuous function. Then for every $\xi,\eta, \zeta>0$ there exists a compact set $K\subset\Omega$ and a function $g\in C^1_c(\Omega)$ such that
\begin{equation}\label{e_lem1}
\mu(\Omega\setminus \Int(K))<\xi\mu(\Omega),
\end{equation}
\begin{equation}\label{e_lem1bis}
\|g\|_\infty\leq \zeta,
\end{equation}
\begin{equation}\label{e_lem2}
|f(x)-Dg(x)|\leq\eta, {\rm{for\;every\;}} x\in K.
\end{equation}
Moreover there exists $C=C(N)$ such that
\begin{equation}\label{e_lem3}
\|Dg\|_p\leq C\xi^{1/p-1}\|f\chi_{\spt(g)}\|_p,\;{\rm{for\;every\;}}p\in[1,\infty],
\end{equation}
where $\spt(g)$ is the support of the function $g$.
\end{lem}
\begin{proof}
Suppose $\xi<1$. Let $K'$ be a compact subset of $\Omega$ such that 

\begin{equation}\label{e_k_1}
\mu(\Omega\setminus K')<\mu(\Omega)\xi/3.
\end{equation}
Let $d':=\dist(K',\R^N\setminus\Omega)$ and $d:=\min\{1,d'/2\}$. Denote by $K''$ the compact set
$$K'':=\{x\in\Omega:\dist(x,K')\}\leq d.$$
Since $f$ is uniformly continuous in $K''$, there exists $0<\delta<d$ such that for all $x\in K''$, $y\in\Omega$ it holds
\begin{equation}\label{e_unif_cont}
|x-y|<\delta\implies|f(x)-f(y)|<\eta.
\end{equation}

Consider the family of boxes $\{\bxset x_i,r_i.\}_i$ obtained applying
Corollary \ref{cor:neg_frames}, with $U=\Omega$ and the choice of parameters
\begin{equation}\label{e:choice_r_0}
r_0=\min\left\{\frac{\delta}{2N};\frac{\zeta}{N\|f\|_{\infty}}\right\} \quad\mbox{and}\quad
\varepsilon=\frac{\xi}{48N2^N}.
\end{equation}
Let 
$$\{\bxset x_1,r_1.,\ldots,\bxset x_M,r_M.\}$$ 
be a finite subfamily such that $\bxset x_i,r_i.\cap K'\neq\emptyset$ for $i=1\ldots,M$ and
\begin{equation}\label{e_k_2}
\mu\left(K'\setminus\bigcup_{i=1}^M\bxset x_i,r_i.\right)<\mu(\Omega)\xi/3.
\end{equation}
For $i=1,\ldots, M$, let $\phi_i\in C^1(\Omega)$ such that $0\leq\phi_i\leq 1$, $\phi_i\equiv 1$ in the set $\bxset x_i,r_i.\setminus \fxset x_i,r_i,\varepsilon.$ , $\phi_i\equiv 0$ outside $\bxset x_i,r_i.$ and

\begin{equation}\label{est_grad}
\|D\phi_i\|_{\infty}\leq \frac{2}{r_i\varepsilon}.
\end{equation}
Denoting 
$$a_i:=\frac{\int_{\bxset x_i,r_i.}f\;d\mu}{\mu(\bxset x_i,r_i.)},$$ we set, for all $x\in\Omega$
$$g(x):=\sum_i\phi_i(x)\langle a_i,x-x_i\rangle.$$
We finally set 
$$K:=\bigcup_{i=1}^M\cl(\bxset x_i,r_i.\setminus \fxset x_i,r_i,\varepsilon.).$$
It is easy to see that $g\in C^1(\Omega)$ and $\|g\|_{\infty}\leq Nr_0\|f\|_{\infty}$, hence property \eqref{e_lem1bis} follows from the choice of $r_0$ in \eqref{e:choice_r_0}. Property \eqref{e_lem1} follows from the inequality
$$\mu(\Omega\setminus \Int(K))\leq \mu(\Omega\setminus K')+\mu(K'\setminus (\bigcup_{i=1}^M\bxset x_i,r_i.))+\mu(( \bigcup_{i=1}^M\bxset x_i,r_i.)\setminus \Int(K))$$
by applying \eqref{e_k_1}, \eqref{e_k_2} and \eqref{eq:neg_frames_cs2}, paired with the choice of $\varepsilon$ in \eqref{e:choice_r_0}. Property \eqref{e_lem2} follows from \eqref{e_unif_cont} by the choice of $r_0$ in \eqref{e:choice_r_0} and that of $a_i$. To prove \eqref{e_lem3}, in the case $p\in[1,\infty)$, we compute, using \eqref{est_grad} and the definition of $g$,
$$\|Dg\|_p^p\leq\sum_i\int_{\bxset x_i,r_i.\setminus\fxset x_i,r_i,\varepsilon.}|a_i|^pd\mu+\int_{\fxset x_i,r_i,\varepsilon.}(2N|a_i|r_i)^p(2/(r_i\varepsilon))^pd\mu.$$
Combining with \eqref{eq:neg_frames_cs2} we have
$$\|Dg\|_p^p\leq\sum_i\mu({\bxset x_i,r_i.})(|a_i|^p+16N2^N\varepsilon^{1-p}(2N|a_i|)^p)$$
and by the definition of $a_i$, this implies
$$\|Dg\|_p^p\leq C\varepsilon^{1-p}\sum_i(\int_{\bxset x_i,r_i.}|f|\;d\mu)^p.$$
Finally, by Jensen's inequality, we get \eqref{e_lem3}. The case $p=\infty$ follows immediately from \eqref{est_grad}.
\end{proof}
\begin{proof}[Proof of Theorem \ref{mainI}]
Suppose that $\varepsilon<1$ and $f$ is not $\mu$-almost everywhere $0$. 

{\bf{First case.}} $f$ is continuous and bounded.
For every $n\geq 1$, set $$\eta_n:=a\varepsilon^22^{-2(n+1)},$$
where 
$$0<a:=\inf_{p\in[1,\infty]}\mu(\Omega)^{-1/p}\|f\|_p.$$ 
We define iteratively a sequence $(\Omega_n,g_n,K_n,f_n)_{n\in\N}$ as follows. Set $\Omega_0:=\Omega, g_0:=0, K_0:=\emptyset, f_0:=f$. Let $n>0$ and assume 
$\Omega_{n-1}, g_{n-1}, K_{n-1}, f_{n-1}$ are given. Apply Lemma \ref{lemma_base}, to obtain compact set $K_n\subset\Omega_{n-1}$ and a function $g_n\in C^1_c(\Omega_{n-1})$ such that
\begin{equation}\label{e_lem1_1}
\mu(\Omega_{n-1}\setminus \Int(K_n))<2^{-n-1}\varepsilon\mu(\Omega_{n-1}),
\end{equation}
\begin{equation}\label{e_lem1_1bis}
\|g_i\|_{\infty}\leq2^{-n}\zeta,
\end{equation}
\begin{equation}\label{e_lem2_2}
|f_{n-1}(x)-Dg_n(x)|\leq\eta_n,\; {\rm{for\;every\;}} x\in K_n.
\end{equation}
\begin{equation}\label{e_lem3_3}
\|Dg_n\|_p\leq C(2^{-n-1}\varepsilon)^{1/p-1}\|f_{n-1}\chi_{\spt(g_n)}\|_p,\;{\rm{for\;every\;}}p\in[1,\infty].
\end{equation}
Finally set $\Omega_n:={\rm{int}}(K_n)$. Define $f_n$ on $\Omega_n$ as
$f_n:=f_{n-1}-Dg_n$. We set $K:=\bigcap_{n>0}K_n$ and
$g:=\sum_{n>0}g_n$. The bound \eqref{e_lem1bis} is an immediate consequence of \eqref{e_lem1_1bis}. We prove now that the set $K$ and the function $g$
satisfy \eqref{e_tmain1}, \eqref{e_tmain2} and \eqref{e_tmain3}. To
prove \eqref{e_tmain1}, notice that by \eqref{e_lem1_1} and
(\ref{eq:neg_frames_cs2}) it holds
$$\mu(\Omega\setminus K)=\sum_{n\geq 0}\mu(\Omega_n\setminus \Int(K_{n+1}))\leq \varepsilon\mu(\Omega).$$
Since, for $n\geq 1$, $\spt({g_{n+1}})\subset{K_n}$, combining \eqref{e_lem2_2} and \eqref{e_lem3_3} with $p=\infty$, we get
$$\|Dg_{n+1}\|_{\infty}\leq C(2^{-(n+1)}\varepsilon)^{-1}\|f_n\chi_{\spt(g_{n+1})}\|_{\infty}\leq C(2^{-(n+1)}\varepsilon)^{-1}\eta_n=C(2^{-(n+1)}\varepsilon)a.$$
This implies that $(\sum_{i=1}^n Dg_i)_{n\in\N}$ (and hence also $(\sum_{i=1}^ng_i)_{n\in\N}$) converges uniformly, therefore $g\in C^1_c(\Omega)$.
Since for $n\geq 1$ it holds
$f=f_{n-1}+\sum_{i=0}^{n-1}Dg_i$ on $K_n$, then \eqref{e_tmain2} follows immediately from \eqref{e_lem2_2}.
To prove \eqref{e_tmain3}, we compute, using \eqref{e_lem2_2} and \eqref{e_lem3_3}
$$\|Dg\|_p\leq\|Dg\chi_K\|_p+\|Dg\chi_{K^c}\|_p\leq\|f\chi_K\|_p+\sum_{n\geq 0}\|Dg\chi_{\Omega_n\setminus\Omega_{n+1}}\|_p$$
$$\leq\|f\|_p+\sum_{n\geq 0}\|Dg_{n+1}\chi_{\Omega_n\setminus\Omega_{n+1}}\|_p\leq\|f\|_p+C2^{n+1}\varepsilon^{1/p-1}\sum_{n\geq 0}\|f_n\chi_{\Omega_n}\|_p$$
$$\leq(1+2C\varepsilon^{1/p-1})\|f\|_p+C2^{n+1}\varepsilon^{1/p-1}\sum_{n\geq 1}\|(f_{n-1}-Dg_n)\chi_{\Omega_n}\|_p$$
$$\leq(1+2C\varepsilon^{1/p-1})\|f\|_p+C2^{n+1}\varepsilon^{1/p-1}\mu(\Omega)^{1/p}\sum_{n\geq 1}\eta_n\leq(1+3C\varepsilon^{1/p-1})\|f\|_p$$
\end{proof}

{\bf{Second case.}} $f$ is Borel.
Fix $\varepsilon>0$. There exists $r>0$ such that 
$$\alpha:=\mu(\{x\in\Omega: |f(x)|>r\})\leq\varepsilon/4.$$ By Lusin's theorem there exists a continuous function $f_1:\Omega\to\R^N$ which agrees with $f$ outside a set of measure $\mu$ less than $\alpha$. The function 
$$f_2(x) =
\bigg \{
\begin{array}{rl}
f_1(x) & {\rm{if }}\; |f_1(x)| \leq r \\
rf_1(x)/|f_1(x)| & {\rm{if }}\; |f_1(x)|> r \\
\end{array}
$$
is continuous and bounded and $\mu(\{x:f(x)\neq f_2(x)\})\leq \varepsilon/2$. Moreover, for every $p\in[1,\infty]$, it holds $\|f_2\|_p\leq 2\|f\|_p$. The theorem follows easily by applying the previous case to the function $f_2$.

\section{Proof of Theorem \ref{main}(II)}\label{s5}
The proof of part (II) of Theorem \ref{main} is quite involved. The reader might find helpful to read Section \ref{s4} before proceeding: although the result in dimension 1 is stronger, the construction presented there requires a considerably smaller amount of technicalities. In the sequel $\glip f.$ denotes the Lipschitz constant of the function $f$.
\subsection{Preliminary results}
\begin{defn}[Local behaviour of a Lipschitz function]
  \label{defn:local-lip-beha}
  Let $S$ be a set and let $\alpha\ge0$ and $r_0>0$. A real-valued
  Lipschitz function $f$ whose domain contains $S$ is said to be
  \textbf{$\alpha$-Lipschitz on $S$ below scale $r_0$} if whenever
  $x,y\in X$ are such that $\dist(x,S),\dist(y,S)\le r_0$
   and $d(x,y)\le r_0$ one has:
  \begin{equation}
    \label{eq:mina1}
    \left| f(x) - f(y) \right| \le\alpha d(x,y).
  \end{equation}
The Lipschitz function $f$ is said to be \textbf{asymptotically flat on $S$} if
for each $\varepsilon>0$ there is an $r_\varepsilon>0$ such that $f$ is
$\varepsilon$-Lipschitz on $S$ below scale $r_\varepsilon$.
\end{defn}
\pcreatenrm{wfx}{\wform\mu;X.}{|}
\pcreatenrm{wf}{\wform\mu.}{|}
\pcreatenrm{linf}{\infty}{\|}
\par Before moving on we need to recall something about the general
differentiability theory for real-valued Lipschitz functions in the metric setting
developed in~\cite{deralb} and the differentiability theory,
wrt singular Radon measures, for real-valued Lipschitz functions
defined on Euclidean spaces studied in~\cite{alberti-marchese}. We do
not want to dispirit the reader: both theories can essentially be
treated as black-boxes to understand the results here, as they only
intervene through the Localized Approximation Scheme,
Theorem~\ref{thm:local-appx}. 
\begin{defn}[Alberti representations]
  \label{defn:alb-reps}
  Let $\mu$ be a Radon measure on a metric space $X$ and let
  $\frags(X)$ denote the set of $1$-Lipschitz maps
  $\gamma:\dom\gamma\to X$ where $\dom\gamma$ is a compact subset of
  $\real$. We topologize $\frags(X)$ with the Hausdorff distance
  between graphs. An Alberti representation of $\mu$ is a pair $(Q,w)$
  where $Q$ is a Radon measure on $\frags(X)$ and $w$ is a locally bounded
  Borel map $w:X\to[0,\infty)$ such that:
  \begin{equation}
    \label{eq:alb-reps-1}
    \mu = \int_{\frags(X)}w\gamma_{\#}(\lebmeas 1.\on\dom\gamma)\,dQ(\gamma),
  \end{equation}
where $\gamma_{\#}(\lebmeas 1.\on\dom\gamma)$ denotes the
push-forward, using $\gamma$, of the $1$-dimensional Lebesgue measure
on $\dom\gamma$. More precisely,~(\ref{eq:alb-reps-1}) should be
understood as follows: for each $g:X\to\real$ continuous and in
$L^1(\mu)$ one has:
\begin{equation}
  \label{eq:alb-reps-2}
  \int_Xg\,d\mu = \int_{\frags(X)}dQ(\gamma)\int_{\dom\gamma}w\circ\gamma(t)g\circ\gamma(t)\,dt.
\end{equation}
\end{defn}
\begin{defn}[The norm of the Weaver differential]
  \label{defn:weav-diff}
  Let $\mu$ be a Radon measure on $X$ and $f:X\to\real$ Lipschitz. We
  denote by $\wfnrm df.$ the local norm of $df$~\cite[Defn.~2.101 \&
  2.123]{deralb}, which is an $L^\infty(\mu)$-function which is $\ge0$
  $\mu$-a.e. For this paper we do not need the explicit definition of
  $\wfnrm df.$ but the following
  characterization~\cite[Sec.~3.3]{deralb}: $\wfnrm df.\ge\alpha$ on a
  Borel set $S\subset X$ if and only if for each $\varepsilon>0$ the
  measure $\mu\on S$ has an Alberti representation
  $(Q_\varepsilon,w_\varepsilon)$ such that for
  $Q_\varepsilon$-a.e.~$\gamma$ for $\lebmeas
  1.$-a.e.~$t\in\dom\gamma$ one has
  $(f\circ\gamma)'(t)\ge\alpha-\varepsilon$. From the definition of
  the decomposability bundle in~\cite{alberti-marchese} in terms of
  the Alberti representations we see that if $f:\real^N\to\real$ is
  Lipschitz for $\mu$-a.e.~$x$ one has:
  \begin{equation}
    \label{eq:weav-diff-1}
    \wfnrm df.(x) = \|d_{V(x,\mu)}f\|_2,
  \end{equation}
where $d_{V(x,\mu)}f$ is the derivative of $f$ at $x$ in the direction
of $V(x,\mu)$.
\end{defn}
\begin{thm}[Localized Approximation Scheme]
  \label{thm:local-appx}
  Let $(X,\mu)$ be a locally compact metric measure space ($\mu$ being
  Radon). Let $f$ be a real-valued Lipschitz function defined on $X$
  and $K\subset X$ a compact subset on which $\wfnrm df.\le\alpha$ for
  some $\alpha>0$. Then for each $\varepsilon>0$ there are an
  $r_\varepsilon>0$, a compact $K_\varepsilon\subset K$ and a real-valued
  Lipschitz function $f_\varepsilon$ defined on $X$ such that:
  \begin{description}
  \item[(Apx1)] For any open set $U\subset X$ containing $K$
    the Lipschitz constant
    $\glip f_\varepsilon|U.$ of the restriction $f_\varepsilon|U$ is
    at most the Lipschitz constant $\glip f|U.$ of the restriction
    $f|U$. In particular, taking $U=X$, $\glip f_\varepsilon.\le\glip
    f.$.
  \item[(Apx2)] $\linfnrm f_\varepsilon - f.\le\varepsilon$ and $f$ is
    $\alpha$-Lipschitz on $K_\varepsilon$ below scale $r_\varepsilon$.
  \item[(Apx3)] $\mu(K\setminus K_\varepsilon)\le\varepsilon$.
  \end{description}
\end{thm}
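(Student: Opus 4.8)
The plan is to prove the statement in three stages: first translate the analytic bound $\wfnrm df.\le\alpha$ on $K$ into a geometric flatness of $f$ along the curves that carry $\mu\on K$; then make that flatness uniform, at a single scale, on a compact subset of almost full measure, by Egorov's theorem; and finally truncate $f$ against a flat model built near that subset, in such a way that no localized Lipschitz constant is increased.

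\emph{Stage 1 (unpacking the hypothesis).} Since $\wfnrm d(-f).=\wfnrm df.$, the characterization recalled after Definition~\ref{defn:weav-diff} says that no positive-measure Borel subset of $K$ carries an Alberti representation along whose curves $f$, or $-f$, has a one-sided derivative exceeding $\alpha$. Combined with the maximality of the decomposability bundle from \cite{alberti-marchese} in the Euclidean setting — respectively the analogous maximality for Weaver derivations from \cite{deralb} in the metric one — this upgrades to the statement that \emph{every} Alberti representation $(Q,w)$ of $\mu\on K$ satisfies $|(f\circ\gamma)'(t)|\le\alpha$ for $Q$-a.e.\ $\gamma$ and $\lebmeas 1.$-a.e.\ $t\in\dom\gamma$: indeed the velocity of $\gamma$ lies $\mu$-a.e.\ in $V(\mu,\cdot)$, and along $V(\mu,x)$ one has $|d_{V(\mu,x)}f|=\wfnrm df.\le\alpha$ by \eqref{eq:weav-diff-1}. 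I would then split $\mu\on K=\mu_{\mathrm{at}}+\mu_{\mathrm{diff}}$ into its atomic and non-atomic parts: since $\mu(K)<\infty$ the atomic part is a countable family of isolated points, to be handled at the very end by an elementary truncation near each atom, while for $\mu_{\mathrm{diff}}\on K$ I would fix finitely many Alberti representations whose carrying curves cover $\mu_{\mathrm{diff}}$-a.e.\ point of $K$.

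\emph{Stage 2 (uniformization).} Along $\mu$-a.e.\ carrying curve, $t\mapsto f\circ\gamma(t)$ is $\alpha$-Lipschitz and $\gamma$ is metrically differentiable with $\metdiff\gamma=1$ a.e.; in $\R^N$, at $\mu$-a.e.\ of its points the curve is tangent to $V(\mu,\cdot)$, along which $f$ is classically differentiable with gradient of norm $\le\alpha$. By Egorov's theorem I would pass to a compact $K_\varepsilon\subseteq K$ with $\mu(K\setminus K_\varepsilon)\le\varepsilon$ and to a scale $r_\varepsilon>0$ on which all of the above holds \emph{uniformly} — in particular the bundle $V(\mu,\cdot)$ is nearly constant at scale $r_\varepsilon$ along $K_\varepsilon$ — so that $f$ is $(\alpha+\varepsilon)$-Lipschitz below scale $r_\varepsilon$ on the union $S_\varepsilon$ of $K_\varepsilon$ with the carrying-curve fragments through it, in the sense of Definition~\ref{defn:local-lip-beha}. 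This is genuinely weaker than $f|_{K_\varepsilon}$ being $(\alpha+\varepsilon)$-Lipschitz at small scales: the point of discarding a set of measure up to $\varepsilon$ is precisely to guarantee that nearby points of $K_\varepsilon$ are joined, through the $\mu$-carrying structure, by fragments along which $f$ has slope at most $\alpha+\varepsilon$, so that $f$ is flat at scale $r_\varepsilon$ \emph{along the $\mu$-relevant directions}, even though $f|_{K_\varepsilon}$ need only be $\glip f.$-Lipschitz.

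\emph{Stage 3 (construction of $f_\varepsilon$).} Proving the theorem for all small $\varepsilon$ suffices; moreover, if the asymptotic Lipschitz constant $L_0:=\inf_{\rho>0}\glip f|B(K,\rho).$ does not exceed $\alpha$ then $f$ is already $\alpha$-Lipschitz below a small scale on $K$ and one simply takes $f_\varepsilon=f$, $K_\varepsilon=K$; so assume $L_0>\alpha$ and $\varepsilon<L_0-\alpha$. Using Stage 2 I would build a \emph{flat model}: a Lipschitz function $h$ that is $(\alpha+\varepsilon)$-Lipschitz below scale $r_\varepsilon$ near $K_\varepsilon$ and is $O(\glip f.\,d(\cdot,K_\varepsilon))$-close to $f$ there, obtained by patching, over a fine cover of $K_\varepsilon$, the $\mu$-directional first-order expansions $z\mapsto f(x_i)+\langle d_{V(\mu,x_i)}f(x_i),\,\pi_{V(\mu,x_i)}(z-x_i)\rangle$ furnished by the carrying fragments (the near-constancy of $V(\mu,\cdot)$ from Stage 2 being what keeps the patching error under control). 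Then I would set
\[
  f_\varepsilon:=\mathrm{med}\bigl(f,\ h-c,\ h+c\bigr),
\]
where $c$ is a nonnegative cutoff vanishing on a neighbourhood of $K_\varepsilon$, capped at a value $\le\varepsilon$, with $\glip c.\le L_0-(\alpha+\varepsilon)$, arranged so that $c\ge|f-h|$ wherever $f_\varepsilon$ ought to coincide with $f$. On the neighbourhood of $K_\varepsilon$ where $c\equiv0$ one has $\mathrm{med}(f,h,h)=h$, so $f_\varepsilon=h$ is $(\alpha+\varepsilon)$-Lipschitz below scale $r_\varepsilon$ on $K_\varepsilon$, which gives the flatness half of (Apx2) after relabelling $\varepsilon$; everywhere $|f_\varepsilon-f|\le c\le\varepsilon$, which is the $L^\infty$-half of (Apx2); $\mu(K\setminus K_\varepsilon)\le\varepsilon$ is (Apx3); and (Apx1) holds because $f_\varepsilon$ is a median of $f$ with the two functions $h\pm c$, each of Lipschitz constant at most $(\alpha+\varepsilon)+\glip c.\le L_0$, while for every open $U\supseteq K$ one has $\glip f|U.\ge\glip f|B(K,\rho).\ge L_0$ for the $\rho$ with $B(K,\rho)\subseteq U$. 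The atomic part of $\mu\on K$ is incorporated by the same median truncation on disjoint small balls about the isolated atoms, with constant flat models, which again cannot enlarge any $\glip f|U.$.

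\emph{Main obstacle.} The heart of the argument is Stage 2 together with the construction of the flat model in Stage 3: one must exploit the fine structure of Alberti representations and of the decomposability bundle to pass from the pointwise, $\mu$-directional bound $|d_{V(\mu,x)}f|\le\alpha$ to a genuine metric flatness of $f$ at a uniform scale on a set of almost full measure, and then extend this flatness off the carrying curves — including across points where a curve comes close to itself — without inflating the Lipschitz constant. The bookkeeping behind this last point, in particular securing the ``for every open $U\supseteq K$'' clause of (Apx1) rather than only the global bound, is exactly what forces the truncation to be built from the \emph{asymptotic} constant $L_0$ and from median/$\min$/$\max$ operations, with the slope budget of the cutoff $c$ chosen so that $\glip h. +\glip c. \le L_0$.
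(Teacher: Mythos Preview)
Your median construction in Stage~3 does not deliver the $L^\infty$-half of (Apx2). Take $X=\R^2$, $\mu$ Lebesgue measure on the $x$-axis, $f(x,y)=y$; then $V(\mu,\cdot)=\R\times\{0\}$, $d_Vf=0$, so $\wfnrm df.=0$ and $\alpha>0$ may be taken arbitrarily small. Your flat model, patched from $V$-directional first-order expansions, is $h\equiv 0$, and $|f-h|=|y|$ is unbounded. With $c$ capped at $\varepsilon$, at any point with $|y|>2\varepsilon$ one has $f>h+c$, hence $f_\varepsilon=\mathrm{med}(f,h-c,h+c)=h+c$ and $|f_\varepsilon-f|=|y|-c\ge|y|-\varepsilon$; so the claim ``everywhere $|f_\varepsilon-f|\le c\le\varepsilon$'' is simply false. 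The four constraints you place on $c$ --- vanishing near $K_\varepsilon$, capped at $\varepsilon$, $\glip c.\le L_0-(\alpha+\varepsilon)$, and $c\ge|f-h|$ where $f_\varepsilon$ ought to equal $f$ --- are mutually incompatible, because $|f-h|$ grows at rate $\approx\glip f.$ in directions transverse to $V(\mu,\cdot)$ (there $h$ is constant while $f$ need not be), forcing $c$ to grow at that same rate in violation of both the cap and the slope bound. There is a second, related gap: the patched model $h$ need not be $(\alpha+\varepsilon)$-Lipschitz at all, since nearby centres $x_i,x_j\in K_\varepsilon$ may satisfy $|f(x_i)-f(x_j)|$ as large as $\glip f.\,d(x_i,x_j)$ --- the hypothesis $\wfnrm df.\le\alpha$ controls $f$ only \emph{along} the bundle, not on $K_\varepsilon$ itself --- so the partition-of-unity gluing inherits a gradient of order $\glip f.$, not $\alpha$. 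Finally, your $h$ is built from orthogonal projections $\pi_{V(\mu,x_i)}$ and linear expansions, which have no meaning in a general locally compact metric space, whereas the statement is metric.

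The paper takes a genuinely different route, designed precisely to flatten $f$ in \emph{all} directions at once without inflating any localized Lipschitz constant. One isometrically embeds $X$ into a convex metric space $Z$, forms the cylinder $Z\times\R$ with the $\ell^\infty$-metric $\max(|t_1-t_2|,d_Z(z_1,z_2))$, and identifies $X$ with the \emph{graph} of $f$ via $x\mapsto(i(x),f(x))$. The projection $\tau(z,t)=t$ then extends $f$, and one approximates $\tau$ by excising thin horizontal strips $\mathcal{T}_\varepsilon$ covering $\mu$-almost all of (the image of) $K$:
\[
  \tau_\varepsilon(z,t)=a+\int_a^t\chi_{\mathcal{T}_\varepsilon^c}(z,s)\,ds.
\]
Because the modification takes place in the $t$-coordinate --- i.e.\ in the \emph{value} of $f$ --- the resulting $f_\varepsilon=\tau_\varepsilon|_X$ is automatically flat near $K_\varepsilon$ in every metric direction, while $\|\tau_\varepsilon-\tau\|_\infty$ is bounded by the total strip thickness, hence $\le\varepsilon$. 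The key Lipschitz estimate is that $\tau_\varepsilon$ is $1$-Lipschitz for the anisotropic metric $D_\alpha((z_1,t_1),(z_2,t_2))=\max(|t_1-t_2|,\alpha\,d_Z(z_1,z_2))$, and since $D_\alpha\le\glip f|U.\,d_X$ on the graph over any open $U\supset K$, one gets (Apx1) for every such $U$. There is no analogue of this ``work on the graph'' trick in your proposal, and it is exactly what your transverse-direction obstruction is missing.
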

\begin{proof}
  The proof of this result is rather technical and corresponds to
  Theorem 3.66 of~\cite{deralb}, proved in Section 5.1 of~\cite{deralb}, in the special case where $q=1$
  (i.e.~without discussing cones). However, here we need two slight
  modifications of that result: that $X$ is locally compact and
  \textbf{(Apx1)}. We will refer to the notation and proof in Section
  5.1. of~\cite{deralb}.
  \par That in Theorem 3.66 of~\cite{deralb} one can take
  $X$ locally compact is not surprising because in the argument only
  the compactness of $K$ is directly used. 
  \par On the other hand, to obtain \textbf{(Apx1)} we must inspect
  the construction more carefully. We have first constructed a convex
  metric space $Z$ (i.e.~any pair of points is joined by a geodesic)
  and obtained an isometric embedding $i:X\hookrightarrow Z$. Without
  loss of generality we have assumed $\glip f.=1$, considered the
  cylinder $\text{Cyl}=Z\times\real$ (here we use $\real$ instead of a
  finite interval because $X$ is only known to be locally compact)
  with metric:
  \begin{equation}
    \label{eq:local-apx-p1}
    d_{\text{Cyl}}((z_1,t_1),(z_2,t_2)) := \max(|t_1-t_2|, d_Z(z_1,z_2)).
  \end{equation}
  We now identify $X$ with a subset of $\text{Cyl}$ via $x\mapsto
  (i(x),f(x))$. Note that the projection
  \begin{equation}
    \label{eq:local-apx-p2}
    \begin{aligned}
      \tau &:\text{Cyl}\to\real\\
      (z,t) &\mapsto t,
    \end{aligned}
  \end{equation}
extends $f$ as $\tau|X=f$. The goal has then become to approximate
$\tau$, and this has been accomplished by covering $\mu$-a.e. point of $K$
(thus in \textbf{(Apx3)} we pass to a subset $K_\varepsilon$) by
strips whose union is $\mathcal{T}_\varepsilon$ (see the definition of
$\mathcal{T}_n$ above equation (5.41) in~\cite{deralb}). As $K$ is compact
$\mathcal{T}_\varepsilon$ lies in $Z\times[a,b]$ for some $a$, $b$ and
the approximation $\tau_\varepsilon$ is obtained by setting
$\tau_\varepsilon= \tau$ on $Z\times[-\infty,a)$ and
\begin{equation}
  \label{eq:local-apx-p3}
  \tau_\varepsilon(z,t) := a + \int_a^t\chi_{\mathcal{T}^c_\varepsilon}(z,s)\,ds\quad\text{elsewhere.}
\end{equation}
In (5.48) of~\cite{deralb} we have proved that
$\tau_\varepsilon$ is $1$-Lipschitz with respect to the distance:
\begin{equation}
  \label{eq:local-apx-p4}
  D_\alpha((z_1,t_1), (z_2,t_2)) := \max(|t_1-t_2|, \alpha d_Z(z_1,z_2)).
\end{equation}
In particular, as $K\subset U$, $\alpha\le\glip f|U.$. Now pick
$(z_i,t_i)\in U$ for $i\in\{1,2\}$ such that $z_i=i(x_i)$ and
$t_i=f(x_i)$; then:
\begin{equation}
  \label{eq:local-apx-p5}
  D_\alpha((z_1,t_1), (z_2,t_2)) \le \glip f|U. d_X(x_1,x_2),
\end{equation}
which proves the theorem.
\end{proof}
\def\gdec{^{\normalfont\text{good}}}
\pcreatenrm{linfball}{\infty,B}{\|}
\def\msdec{_{\normalfont\text{m}}} 
\def\szdec{_{\normalfont\text{s}}} 
\def\spdec{_{\normalfont\text{p}}} 
\begin{lem}[Step 1 of Construction]
  \label{lem:step-1}
  Let $K\subset\R^N$ be a compact set and assume that the decomposability bundle of 
  $\mu\on K$ has constant dimension $N_0$ and let $\pi_x$ denote its fibre at $x$ and
  $\pi_x^\perp$ its orthogonal complement. Assume that for some
  $N_0$-dimensional hyperplane $\pi$ one has $\linfnrm
  \pi_x-\pi.\le\varepsilon_0$ and let $h:\pi^\perp\to\real$ be
  $1$-Lipschitz. Then there is a constant $C=C(N,N_0)$ (indep.~of $h$)
  such that for each choice of parameters $(\varepsilon\szdec,
  \varepsilon\msdec, \sigma, r_0)\in(0,1/2)^4$ there are a
  $\sqrt{3}$-Lipschitz function $g:\R^N\to\real$, compact subsets
  $J\gdec\subset J\subset K$ and a
  scale $r>0$ such that:  
  \begin{enumerate}[label={\normalfont(\alph*)}]
  \item $\mu(K\setminus J)\le\varepsilon\msdec\mu(K)$,
    $\linfnrm g.\le\varepsilon\szdec$ and $g$ is
    $C\varepsilon_0$-Lipschitz on $J$ below scale
    $r$.
  \item $\mu(J\gdec)\ge
    C^{-1}\sigma^{N-N_0}\mu(J)$.
  \item One can decompose $J\gdec$ as a finite disjoint
    union $\bigcup_{a=1}^{M}C_a$ such that for each $a\in\{1,\cdots,M\}$ there is an
  $0<r_a\le r_0$ such that whenever
    $x\in C_a$ one has:
    \begin{equation}
      \label{eq:mina2}
      \linfballnrm {\tang.g-h}.\le C(\varepsilon\szdec+\sigma),
      \end{equation}
    where in~(\ref{eq:mina2}) $\tang.$ is the map introduced in Definition \ref{def_bu} and we have implicitly extended $h$ as a map
    $h:\real^N=\pi\oplus \pi^\perp\to\real$ by letting $h(y,\tilde
    y)=h(\tilde y)$.
  \end{enumerate}
\end{lem}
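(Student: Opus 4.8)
The plan is to build $g$ as a small perturbation that, on a controlled portion of $K$, first replaces the genuinely non-trivial part of the behaviour of a Lipschitz function along the decomposability bundle by something that looks like $h$ after rescaling, using the Localized Approximation Scheme to make the bundle directions ``asymptotically flat''. More precisely, I would start from the observation that $h$ depends only on the $\pi^\perp$-coordinates, so on $\real^N=\pi\oplus\pi^\perp$ the map $h$ is $1$-Lipschitz and has vanishing derivative in the $\pi$-directions. The idea is to approximate $h$ (composed with the orthogonal projection $y\mapsto \tilde y$) at a single small scale $r$ by a piecewise-linear/locally-affine function $g$, chosen so that on a grid of small boxes $C_a$ of radius $r_a\le r_0$ the rescaled function $\tang{x,r_a}.g$ is uniformly close to $h$; the error $\sigma$ is the oscillation of the gradient of the piecewise-affine approximant over a box, and $\varepsilon\szdec$ enters because we also cut $g$ off to keep $\|g\|_\infty\le\varepsilon\szdec$, which forces the construction to take place inside thin slabs of width $\sim\varepsilon\szdec$ transverse to $\pi^\perp$.

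First I would invoke Theorem~\ref{thm:local-appx} with $\alpha\sim\varepsilon_0$: since $\linfnrm\pi_x-\pi.\le\varepsilon_0$ and $\wfnrm d\ell.(x)=\|d_{\pi_x}\ell\|_2$ for the linear coordinate functions $\ell$ on $\pi$, the coordinate functions adapted to $\pi$ have small Weaver differential on $K$, so after passing to a compact subset $K_{\varepsilon}$ (absorbed into $J$) and below a scale $r$ every such coordinate function — and hence, by a triangle inequality, the function $g$ we build from them — is $C\varepsilon_0$-Lipschitz on $J$ below scale $r$; this yields item (a) together with the trivial $L^\infty$ bound and the measure bound from (Apx3). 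Next, for item (b)–(c), I would decompose $J$ using a Vitali/net argument at the chosen scale $r_0$: cover $\mu$-a.e.\ point of $J$ by disjoint boxes on which $\pi_x$ stays within $\varepsilon_0$ of $\pi$, and inside each such box select a sub-box $C_a$ of radius $r_a$ aligned with $\pi\oplus\pi^\perp$ on which the rescaling $\tang{x,r_a}.g$ agrees with the affine piece of $g$ up to $\sigma$. The factor $\sigma^{N-N_0}$ in (b) is the price of restricting to the slab of thickness $\sim\sigma$ (in units of $r_a$) in the $N-N_0$ directions of $\pi^\perp$ complementary to where $h$ is being realized as a graph — only a fixed proportion $\sim\sigma^{N-N_0}$ of each parent box lies in a region where the single-scale affine approximation of $h$ is within $\sigma$ of $h$, because $h$ is merely Lipschitz, not $C^1$, in the $\pi^\perp$-directions.

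Concretely, to get \eqref{eq:mina2} I would fix a scale $\rho$, partition $\pi^\perp$ into cubes of side $\rho$, and on each such cube replace $h$ by the affine function interpolating its values at the center — the interpolation error is $\le\rho\cdot\mathrm{Lip}(h)=\rho$ on the cube, so choosing $\rho\sim\sigma r_a$ gives, after rescaling by $r_a^{-1}$, an $L^\infty$ error $\le\sigma$ on $B(0,1)$; adding $g$'s cutoff contribution, which is $O(\varepsilon\szdec)$ in sup-norm after rescaling, yields the bound $C(\varepsilon\szdec+\sigma)$. The good set $C_a$ is then the preimage (intersected with $J$) of the sub-cubes on which this local affine piece is actually used without being spoiled by the cutoff; the slabs near the cube boundaries and near the cutoff region are discarded, and a direct volume count shows the retained fraction is $\ge C^{-1}\sigma^{N-N_0}$, with the extra $\sigma^{N-N_0}$ (rather than $\sigma^{N}$) because along the $N_0=\dim\pi$ directions tangent to where $h$ is flat no thinning is needed. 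The main obstacle, and the part requiring genuine care, is reconciling the three competing scales: the cutoff needed to enforce $\|g\|_\infty\le\varepsilon\szdec$ wants $g$ supported in thin slabs, the approximation of $h$ by affine pieces wants boxes small enough that the interpolation error beats $\sigma$, and the Localized Approximation Scheme only gives asymptotic flatness of the $\pi$-coordinates below an \emph{unknown} scale $r_\varepsilon$; one must choose $r_0$, then $r$, then the grid scale $\rho$ in the right dependent order, and verify that the cutoff's gradient (of size $\sim 1/(\text{slab width})$, which can be large) does not destroy either the $\sqrt3$-Lipschitz bound on $g$ globally or the $C\varepsilon_0$-Lipschitz bound on $J$ below scale $r$ — this last point is what forces the cutoff region to be placed \emph{outside} $J$ and dictates the geometry of $J$ versus $J\gdec$.
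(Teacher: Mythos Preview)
Your proposal has the right high-level shape --- build a function that looks like $h$ after rescaling on small pieces, then use the Localized Approximation Scheme to flatten it along the bundle --- but there is a genuine gap in how you obtain the measure bound (b). You write that ``a direct volume count shows the retained fraction is $\ge C^{-1}\sigma^{N-N_0}$''. This would be fine if $\mu$ were Lebesgue measure, but $\mu$ is an \emph{arbitrary} Radon measure: there is no reason whatsoever that a slab of relative width $\sigma$ in the $\pi^\perp$-directions should carry a $\sigma^{N-N_0}$-fraction of the $\mu$-mass of its parent box; $\mu$ could concentrate entirely outside every such slab at a fixed scale. The paper handles this with a separate ``good rectangle'' lemma (Lemma~\ref{lem:good_rect} and Corollary~\ref{cor:good_rect}): by an averaging argument over translates one shows that at \emph{some} centre and scale the thin core of relative width $\sigma$ does carry at least $c\,\sigma^{N-N_0}$ of the $\mu$-mass of the ambient rectangle, and then Besicovitch gives the covering. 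Without this step, (b) simply does not follow for a general $\mu$.

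A secondary issue is the order of operations and the role of the affine approximation. You propose to apply Theorem~\ref{thm:local-appx} to the $\pi$-coordinate functions and then assemble $g$ from them; but the conclusion you need is that $g$ \emph{itself} is $C\varepsilon_0$-Lipschitz on $J$ below scale $r$, and this does not follow from the coordinate functions being so, since the assembly involves $h$, cutoffs, and rescalings which need not preserve small local Lipschitz constants. The paper instead first builds the full function $f$ as a sum of rescaled copies of $\varphi(|y|)\psi(|\tilde y|)h(\tilde y)$ placed on the good rectangles, observes directly that its Weaver differential is $\le C\varepsilon_0$ (because $\psi$ and $h$ depend only on the $\pi^\perp$-variable, contributing $O(\varepsilon_0)$, while the longitudinal cutoff $\varphi$ has gradient $O(1/L)$ with $L$ chosen large), and only \emph{then} applies Theorem~\ref{thm:local-appx} to $f$ to produce $g$. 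Finally, no piecewise-affine approximation of $h$ is used at all: the $\sigma$-error in~\eqref{eq:mina2} arises simply because points of the core lie within $\sigma r_i$ of the centre line and $f$ is Lipschitz.
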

Lemma~\ref{lem:step-1} is proven using the following intermediate
results.
\def\eexset#1,#2.{\text{\normalfont E}\left(
    \setbox1=\hbox{$#1$\unskip}\setbox2=\hbox{$#2$\unskip}
    \ifdim\wd1>0pt #1 \else x\fi,
\ifdim\wd2>0pt#2 \else r\fi\right)
} 
\def\ssxset#1,#2.{\text{\normalfont S}\left(
    \setbox1=\hbox{$#1$\unskip}\setbox2=\hbox{$#2$\unskip}
    \ifdim\wd1>0pt #1 \else x\fi,
\ifdim\wd2>0pt#2 \else r\fi\right)
} 
\def\bdec{_\omega^{\normalfont\text{bad}}}
\def\badset{{\normalfont\text{Bad}}(K,r)}
\begin{lem}[A good rectangle]
  \label{lem:good_rect}
  Let $\mu$ be a Radon measure on $\real^N$ and $r_0>0$. Fix parameters $(L,\sigma)\in
  [8,\infty)\times(0,1/2)$ and define for $0<r\leq r_0$ the following sets:
  \begin{align}
    \label{eq:good_rect_1}
    \eexset ,. &:= x + \left[
                        -\frac{L^2r}{2}, \frac{L^2r}{2}
                      \right]^{N_0} \times
                      \left[
                        -2r,2r
                      \right]^{N-N_0},\\
    \ssxset ,. &:= x + \left[
                        -\frac{L^2r}{2} + \frac{Lr}{2}, \frac{L^2r}{2}
                        - \frac{Lr}{2}
                      \right]^{N_0} \times
                      \left[
                        -2\sigma r,2\sigma r
                      \right]^{N-N_0}.
  \end{align}
Given a compact set $K\subset\R^N$ with $\mu(K)>0$, define the bad set:
      \begin{equation}
    \label{eq:yyja_2}
\begin{split}
    \badset := \biggl\{x\in K:
        0<\mu(\ssxset ,.)\le c\mu(\eexset x,r.)
\biggr\},
\end{split}
\end{equation}
where $c:=\sigma^{N-N_0}2^{-2N-N_0-1}(1+1/6)^{-1}$. 
Then there exists $r<r_0$ (possibly depending on $K$) such that $\mu(\badset)\leq\tfrac{1}{2}\mu(K)$.
\end{lem}
Heuristically, $\eexset ,.$ is a rectangle at $x$ at scale $r$ which
is $L^2$-times bigger in the direction of the first $N_0$ coordinates,
while $\ssxset ,.$ is a \textit{core} of $\eexset ,.$ which is much
smaller (generally $\sigma \ll 1$) in the transverse direction of the
last $N-N_0$ coordinates. Even though $\mu$ is not the Lebesgue
measure, Lemma~\ref{lem:good_rect} says that we can find a \textit{good
  rectangle} $\eexset ,.$ such that the ratio $ \mu(\ssxset ,.)/\mu(\eexset ,.)$
is up to a constant at least the same ratio that one would have for Lebesgue measure.
Besicovitch Covering Theorem implies that:
\begin{cor}[Covering by good rectangles]
  \label{cor:good_rect}
  Let $\mu$ be a Radon measure on $\real^N$ and $K$ compact with
  $\mu(K)>0$ and $r_0>0$. Let $L$, $\sigma$, be as above. Then for
  any $\varepsilon>0$ there are finitely many pairwise disjoint $\{\eexset x_i,r_i.\}_i$
  such that:
  \begin{align}
    \label{eq:good_rect_c1}
          0< r_i&\le r_0,\\
    \label{eq:good_rect_c2}
          \mu(\ssxset x_i,r_i.\cap K) &\ge \sigma^{N-N_0}
    2^{-2N-N_0-1}(1+1/6)^{-1}\mu(\eexset x_i,r_i. \cap K)>0,\\
    \label{eq:good_rect_c3}
          \mu(K\setminus\bigcup_i\eexset x_i,r_i.) &\le\varepsilon\mu(K).
  \end{align}
\end{cor}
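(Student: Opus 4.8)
The plan is to deduce the statement from Lemma~\ref{lem:good_rect} together with Besicovitch's covering theorem, applied to the restricted measure $\mu\on K$. First I would put $\nu:=\mu\on K$, a finite Radon measure with $\nu(\real^N)=\mu(K)>0$ and satisfying $\nu(\eexset x,r.)=\mu(\eexset x,r.\cap K)$ and $\nu(\ssxset x,r.)=\mu(\ssxset x,r.\cap K)$ for all $x$ and $r$. Writing $c:=\textfrac{1}{3}\,\sigma^{N-N_0}\,2^{-2N-N_0-1}(1+\textfrac16)^{-1}$ for the constant on the right of~\eqref{eq:good_rect_2}, I call $\eexset x,r.$, with $0<r\le r_0$, a \emph{good rectangle} if $\nu(\ssxset x,r.)\ge c\,\nu(\eexset x,r.)>0$; by the identities just recorded this is exactly condition~\eqref{eq:good_rect_c2}. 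The key geometric observation is that $\eexset x,r.$ is the closed ball of centre $x$ and radius $r$ for the norm on $\real^N=\real^{N_0}\times\real^{N-N_0}$ given by $\|(u,v)\|:=\max\{\textfrac{2}{L^2}\|u\|_\infty,\ \textfrac12\|v\|_\infty\}$, which depends only on the fixed parameter $L$. Hence it is enough to exhibit a countable family of \emph{pairwise disjoint} good rectangles whose union has full $\nu$-measure in $K$: a finite subfamily then leaves uncovered a set of $\nu$-mass $<\varepsilon\nu(K)=\varepsilon\mu(K)$, because $\nu(K)<\infty$, and~\eqref{eq:good_rect_c1}--\eqref{eq:good_rect_c3} all hold for it.

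The crux of the argument — and essentially its only non-routine step — is to check that good rectangles form a fine cover of $\nu$-a.e.\ point of $K$: for $\nu$-a.e.\ $z\in K$ and every $\rho>0$ there should be a good rectangle $\eexset x,r.$ with $r\le\rho$ and $z\in\eexset x,r.$. I would argue by contradiction. For $n\ge1$ the set $G_n:=\bigcup\{\eexset x,r.:\eexset x,r.\ \text{good},\ r\le 1/n\}$ is analytic — it is the projection onto $\real^N$ of a Borel subset of $\real^N\times\real^N\times(0,1/n]$, using that the maps $(x,r)\mapsto\nu(\eexset x,r.)$ and $(x,r)\mapsto\nu(\ssxset x,r.)$ are Borel — hence $\nu$-measurable, so $B_n:=K\setminus G_n$ is $\nu$-measurable and the exceptional set for the fine cover property equals $\bigcup_n B_n$. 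If $\nu(B_n)>0$ for some $n$, inner regularity yields a compact $K'\subset B_n$ with $\nu(K')>0$; applying Lemma~\ref{lem:good_rect} to the Radon measure $\nu$, the compact set $K'$, the scale parameter $r_0:=1/n$ and the same $L,\sigma$ produces $x\in K'$ and $0<r\le 1/n$ with $\nu(\ssxset x,r.)\ge c\,\nu(\eexset x,r.)>0$, that is, a good rectangle of radius $\le 1/n$; but $x\in\eexset x,r.$ while $x\in K'\subset B_n$, contradicting the definition of $B_n$. Thus $\nu(B_n)=0$ for every $n$, which proves the claim.

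Finally I would invoke Besicovitch's covering theorem, in the version valid for balls of a fixed norm on $\real^N$ (the counting constant depending only on $N$ and $L$), applied to the family of all good rectangles and the $\nu$-full subset $A\subset K$ on which the fine cover property holds. This yields boundedly many subfamilies, each consisting of pairwise disjoint good rectangles (hence of radius $\le r_0$), whose union contains $A$; one of them covers at least a fixed fraction $\theta=\theta(N,L)>0$ of $\nu(A)=\nu(K)$, and keeping only finitely many of its members still covers at least $\theta\nu(K)/2$ of $K$ by finitely many disjoint good rectangles. Deleting these finitely many closed rectangles leaves an open set on which the fine cover property persists — for good rectangles of sufficiently small radius, which are then automatically disjoint from the deleted ones — so I iterate the construction there, each round erasing a fixed proportion of the remaining $\nu$-mass. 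The union of all rectangles selected in this process is a countable, pairwise disjoint family covering $\nu$-a.e.\ point of $K$, and the finite truncation described above concludes the proof. I do not expect a genuine obstacle at this last stage: it is the standard passage from Besicovitch's bounded-overlap covering to a disjoint almost-cover, just as the remark preceding the corollary indicates.
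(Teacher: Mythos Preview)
Your proposal is correct and is exactly the approach the paper indicates (it says only ``Application of Besicovitch's covering Lemma implies'' before stating the corollary). One technical point needs tightening, however. Besicovitch's covering theorem requires, for each point $z$ of the set to be covered, a ball \emph{centred at $z$}; your fine-cover claim only asserts that $z$ \emph{lies inside} some good rectangle $\eexset x,r.$, with $x$ possibly different from $z$, and that is not enough to invoke Besicovitch. The fix is immediate and uses nothing beyond what you already wrote: Lemma~\ref{lem:good_rect} in fact produces a point $x\in K'$ which is the \emph{centre} of a good rectangle, so if you run your contradiction argument with
\[
  C_n:=\{z\in K:\ \text{$z$ is the centre of some good rectangle of radius $\le 1/n$}\}
\]
in place of $G_n$, you get $\nu(K\setminus C_n)=0$ for every $n$ by the very same reasoning (the measurability discussion carries over verbatim). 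Besicovitch then applies directly to the centred family $\{\eexset z,r(z).\}_{z\in A}$ over the $\nu$-full set $A=\bigcap_n C_n$, and your iteration goes through unchanged.
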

\begin{proof}[Proof of Lemma~\ref{lem:good_rect}]
Choose an open set $U\supset K$
    such that
    \begin{equation}
      \label{eq:yyja_1}
      \mu(U)\le\left(1+\frac{1}{6}\right)\mu(K).
    \end{equation}
    Then choose $r\le r_0$ such that $\eexset ,4r.\cap K\ne\emptyset$
    implies that $\eexset ,4r.\subset U$. 
Then let $I$ denote the integral:
\begin{equation}
  \label{eq:yyja_3}
  I := \int\chi_{\badset}(x_1)\chi_{\ssxset
    x_1,r/2.}(x_2)\,d\mu(x_1)d\lebmeas N.(x_2);
\end{equation}
if we integrate first in $x_2$ we get:
\begin{equation}
  \label{eq:yyja_4}
  I = r^N\sigma^{N-N_0}(L^2-L)^{N_0}2^{N-2N_0}\mu(\badset).
\end{equation}
If we integrate first in $x_1$ we get:
\begin{equation}
  \label{eq:yyja_5}
  I = \int \mu\left(\badset \cap\ssxset x_2,r/2.\right)\,d\lebmeas N.(x_2).
\end{equation}
Choose $x_2$ such that $\badset \cap\ssxset x_2,r/2.\ne\emptyset$ and
let $x_1\in K$ denote a point in this non-empty intersection. Then for
$i\in\{1,\cdots, N_0\}$ we get:
\begin{equation}
  \label{eq:yyja_6}
  |x_1^i-x_2^i|\le\left(\frac{L^2}{2}-\frac{L}{2}\right)r;
\end{equation}
for $i>N_0$ one has $|x_1^i-x_2^i|\le\sigma r$. In particular,
$\badset \cap\ssxset x_2,r/2.\subset\ssxset x_1,r.$; but as
$x_1\in\badset$ one has $\mu(\ssxset x_1,r.)\le c\mu(\eexset
x_1,r.)$. Using the triangle inequality we observe $\eexset
x_1,r.\subset \eexset x_2,2r.$ from which we conclude $\eexset
x_2,2r.\subset\eexset x_1,4r.\subset U$. We thus obtain the upper
bound:
\begin{equation}
  \label{eq:yyja_7}
  \begin{split}
    I& \le c\int \mu\left(U\cap\eexset x_2,2r.\right)\,d\lebmeas
    N.(x_2)\\
    &=c\int\chi_U(x_1)\chi_{\eexset x_2,2r.}(x_1)\,d\lebmeas
    N.(x_2)d\mu(x_1)\\
    &=c\int\chi_U(x_1)\chi_{\eexset x_1,2r.}(x_2)\,d\lebmeas
    N.(x_2)d\mu(x_1)\\
    &=cr^N(L^2)^{N_0}2^{3N-2N_0}\mu(U)\le cr^N(L^2)^{N_0}2^{3N-2N_0}\left(1+\frac{1}{6}\right)\mu(K).
  \end{split}
\end{equation}
The proof is completed combining~(\ref{eq:yyja_7})
with~(\ref{eq:yyja_4}) and the choice of $c$ which gives $\mu(\badset)\leq\tfrac{1}{2}\mu(K)$.

\end{proof}
\begin{proof}[Proof of Lemma~\ref{lem:step-1}]
  \makestep{1}{Construction of auxiliary functions}
  \par Without loss of generality we will assume that $\pi_0$ is the
  plane $\real^{N_0}\times\{0\}$.
  \par Recall that the $1$-Lipschitz retraction of $\real^{N_0}$ onto
  $B\subset\real^{N_0}$ is given by:
  \begin{equation}
    \label{eq:jjss_1}
    J(x) :=
    \begin{cases}
      x&\text{if $|x|\le1$}\\
      \frac{x}{|x|}&\text{otherwise}.
    \end{cases}
  \end{equation}
  Fix the parameter $L\gg 1$; we define a $\frac{4}{L}$-Lipschitz
  cut-off function on $\real$:
  \begin{equation}
    \label{eq:jjss_2}
    \varphi(r) := 
    \begin{cases}
      1&\text{if $|r|\in[0, L^2/2-L/4]$,}\\
      1 - \frac{4}{L}(|r|-L^2/2+L/4)&\text{if $|r|\in(L^2/2-L/4,
        L^2/2]$,}\\
      0&\text{otherwise.}
    \end{cases}
  \end{equation}
  We also define the $1$-Lipschitz cut-off function on $\real$:
  \begin{equation}
    \label{eq:jjss_3}
    \psi(r) := 
    \begin{cases}
      1&\text{if $|r|\le1$,}\\
      2-|r|&\text{if $|r|\in(1,2]$,}\\
      0&\text{otherwise.}
    \end{cases}
  \end{equation}
  We now replace $h$ by $h\circ J$ so that we can assume $\|h\|_\infty\le
  1$ and ${\partial_r}h=0$ on $\overline{B}^c$.
  \par We define the building block of our construction:
  \begin{equation}
    \label{eq:jjss_4}
    F(y,\tilde y) := \varphi(|y|)\psi(|\tilde y|)h(\tilde y).
  \end{equation}
  We now collect some properties of $F$:
  \begin{description}
  \item[(F1)] $F$ is $(4/L+\sqrt{2})$-Lipschitz (note that on
    $\overline{B}^c$ $\nabla\psi$ and $\nabla h$ give orthogonal
    contributions to the derivative of $F$).
  \item[(F2)] $F = 0$ outside of $B = \left[-\frac{L^2}{2},
      \frac{L^2}{2}\right]^{N_0} \times\left[-2, 2\right]^{N-N_0}$.
  \item[(F3)] $F=h$ on the \textit{core} $S = \left[-\frac{L^2}{2} +
      \frac{L}{4}, \frac{L^2}{2} -
      \frac{L}{4}\right]^{N_0}\times[-1,1]^{N-N_0}$.
  \item[(F4)] $\|F\|_\infty\le \|h\|_\infty\le 1$.
  \item[(F5)] We can assume $\wfnrm df.\le C\varepsilon_0$,
    where $C$ depends possibly only on $N$ and $N_0$.
  \item[(F6)] $F$ decays linearly to $0$ when approaching the boundary of $B$:
    $|F(y,\tilde y)|\le d((y,\tilde y),\partial B)$.
  \end{description}
Only \textbf{(F5)} and \textbf{(F6)} require justification. For
\textbf{(F5)} observe that by the Leibniz rule $df = \psi hd\varphi + \varphi h d\psi
    + \varphi\psi dh$; observe also that $d\psi$ and $dh$ give an $O(\varepsilon_0)$
    contribution to the Weaver differential (we take $\mu$ as the
    reference measure) as
    $\|\pi_x-\pi_0\|_\infty\le\varepsilon_0$. Thus, $\wfnrm
    df.\le\frac{4}{L}+C\varepsilon_0$, and, choosing $L$ large enough
    and inflating $C$, we get \textbf{(F5)}. For \textbf{(F6)} we have
    three cases. The first: $|y|\ge L^2/2-L/4$ so that:
    \begin{equation}
      \label{eq:jjss_5}
      \begin{split}
        |F(y,\tilde y)|&\le|\varphi(y)|\le
        1-\frac{4}{L}\left(|y|-\frac{L^2}{2}+\frac{L}{4}\right)\\
        &=\frac{4}{L}\left(\frac{L^2}{2}-|y|\right)\\
        &\le\frac{4}{L}d((y,\tilde y),\partial B),
      \end{split}
    \end{equation}
and \textbf{(F6)} holds as long as $L\ge 4$. The second: $|y|\le
L^2/2-L/4$ and $|\tilde y|\ge 1$:
\begin{equation}
  \label{eq:jjss_6}
  |F(y,\tilde y)|\le\psi(\tilde y)=2-|\tilde y|\le d((y,\tilde y),\partial B). 
\end{equation}
The third: $|y|\le
L^2/2-L/4$ and $| \tilde y|\le 1$: $d((y,\tilde y),\partial B)\ge1$ and
so we conclude by \textbf{(F4)}.
\makestep{2}{Covering $K$ by good boxes}
\par We apply Corollary~\ref{cor:good_rect} and find finitely many
pairwise disjoint
$\{\eexset x_i,r_i.\}_{i=1}^M$ such that:
  \begin{align}
    \label{eq:jjss_7}
          0< r_i&\le \min(r_0, \varepsilon\szdec/2),\\
    \label{eq:jjss_8}
          \mu(\ssxset x_i,r_i.\cap K) &\ge C^{-1}\sigma^{N-N_0}
          \mu(\eexset x_i,r_i. \cap K)>0,\\
    \label{eq:jjss_9}
          \mu(K\setminus\bigcup_i\eexset x_i,r_i.) &\le\frac{\varepsilon\msdec}{2}\mu(K).
  \end{align}
We let $\hat J := K\cap\bigcup_i\eexset x_i,r_i.$ and $\hat J\gdec :=
K\cap\bigcup_i\eexset x_i,r_i.$ which give (b) and the first
inequality in (a) if we replace $J$ with $\hat J$: the set $J$ will be
chosen later to be a subset of $\hat J$ and $J\gdec$ will be set to be
$J\cap\hat J\gdec$.
\par Write $x_i=(y_i,\tilde y_i)$ and define the
$(4/L+\sqrt{2})$-Lipschitz function $F_i$ supported on $\eexset
x_i,r_i.$:
\begin{equation}
  \label{eq:jjss_10}
  F_i(y,\tilde y) = \sigma r_i F\left(\frac{y-y_i}{r_i}, \frac{\tilde
      y -\tilde y_i}{\sigma r_i}\right).
\end{equation}
Because of \textbf{(F6)} the function $F_i$ can be glued together to
get a $(4/L+\sqrt{2})$-Lipschitz function $f$ as in~\cite[Thm.~4.8]{dim_blow}. Note that the choice of
the $r_i$'s implies $\|f\|_\infty\le\varepsilon\szdec/2$.
\par If $x\in\ssxset x_i,r_i.$ lies on the core center, i.e.~$x$ is of
the form $x=(y,\tilde y_i)$, then:
    \begin{equation}
      \label{eq:jjss_11}
      \linfballnrm {\tang x, \sigma r_i.f-h}.=0.
    \end{equation}
Thus, as $f$ is $(4/L+\sqrt{2})$-Lipschitz, for all $x\in\ssxset
x_i,r_i.$ we have:
    \begin{equation}
      \label{eq:jjss_12}
      \linfballnrm {\tang x, \sigma r_i.f-h}.\le C\sigma.
    \end{equation}
\makestep{3}{Applying the approximation scheme}
\par Note that \textbf{(F5)} implies that $\wfnrm df.\le
C\varepsilon_0$ and applying Theorem~\ref{thm:local-appx} we can find
a $(4/L+\sqrt{2})$-Lipschitz function $g$, a compact set $J\subset\hat
J$ and $r>0$ such that:
\begin{align}
  \label{eq:jjss_13}
  \|g-f\|_\infty &\le\frac{\varepsilon\szdec}{2}\min\left(\frac{1}{2},
    \min_{1\le i\le M}(\sigma r_i)\right),\\
  \label{eq:jjss_14}
  \mu(\hat J\setminus J)&\le\frac{\varepsilon\msdec}{2}\mu(K),\\
  \label{eq:jjss_15}
  \mu(J\cap\hat
  J\gdec)&\le\frac{C^{-1}\varepsilon\msdec}{16}\sigma^{N-N_0}\mu(\hat J),
\end{align}
and $g$ is $(C\varepsilon_0)$-Lipschitz on $J$ below scale $r$. Thus,
if we let $J\gdec := \hat J\gdec \cap J$, then (a) and (b) follow. For
(c) we just combine~(\ref{eq:jjss_12}) with:
\begin{equation}
  \label{eq:jjss_16}
  \frac{\|f-g\|_\infty}{\sigma r_i}\le \frac{\varepsilon\szdec}{2}.
\end{equation}
We finally choose $L$ large enough so that \textbf{(F6)} holds and
$\frac{4}{L}\le \varepsilon\szdec$.
\end{proof}
\begin{lem}[Step 2 of Construction]
  \label{lem:step-2}
Let $K\subset\real^N$ be a compact subset and $\alpha>0$ be such that
$\wfnrm df.\le\alpha$ on $K$. Then there is a constant $C=C(N)$
(indep.~of $f$) such that for any choice of parameters
$(\varepsilon\szdec, \varepsilon\msdec)\in(0,1/2)^2$ there are a
Lipschitz function $\hat f$ and a compact set $\hat K$ such that:
\begin{enumerate}[label={\normalfont(\alph*)}]
\item $\linfnrm \hat f-f.\le\varepsilon\szdec$ and $\mu(K\setminus
  \hat K)\le\varepsilon\msdec\mu(K)$.
\item $\hat f$ is asymptotically flat on $\hat K$.
\item $\hat f$ is $(\glip f. +C\frac{\alpha}{\varepsilon\msdec})$-Lipschitz.
\end{enumerate}
\end{lem}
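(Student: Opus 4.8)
The plan is to use Theorem~\ref{thm:local-appx} as a black box to replace $f$ by an approximation that is genuinely $C(N)\alpha$‑Lipschitz at small scales near a large compact set, and then to make it exactly flat there by subtracting off its local means on a family of cubes produced by Corollary~\ref{cor:neg_frames}. We may assume $\mu(K)>0$ (otherwise take $\hat f=f$, $\hat K=K$). First I would apply Theorem~\ref{thm:local-appx} to $f$ on $K$ with a parameter $\eta\le\min\{\varepsilon\szdec/2,\ \varepsilon\msdec\mu(K)/3\}$, obtaining a Lipschitz function $f_1$, a compact $K_1\subset K$ and a scale $R>0$ with $\linfnrm f_1-f.\le\eta$, $\mu(K\setminus K_1)\le\eta$, $\glip f_1.\le\glip f.$ (this is \textbf{(Apx1)} with $U=\real^N$), and — the crucial point, which is the content of \textbf{(Apx2)} for the approximating function — $f_1$ being $C_0\alpha$‑Lipschitz on $K_1$ below scale $R$ for some $C_0=C_0(N)$. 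Note that $f$ itself need not enjoy such a property on any set of positive $\mu$‑measure, since $\wfnrm df.\le\alpha$ only controls the derivative of $f$ along the decomposability bundle, not in transverse directions.

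Next, fix a frame parameter $\varepsilon'$ of the form $\varepsilon\msdec/(C N2^N)$ (the constant being dictated by the estimates below), put $r_0:=\min\{R/(4\sqrt N),\ \varepsilon\szdec/(4\sqrt N C_0\alpha)\}$, and apply Corollary~\ref{cor:neg_frames} with parameters $3\varepsilon'$ and $r_0$ to a bounded open $U\supset K_1$ with $\mu(U)\le 2\mu(K)$; this gives pairwise disjoint cubes $\bxset z_\lambda,r_\lambda.\subset U$ with $r_\lambda\le r_0$, with $\mu(\fxset z_\lambda,r_\lambda,3\varepsilon'.)\le C N2^N\varepsilon'\mu(\bxset z_\lambda,r_\lambda.)$, and covering $\mu$‑almost all of $K_1$. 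Pass to a finite subfamily $\bxset z_1,r_1.,\dots,\bxset z_M,r_M.$, each meeting $K_1$, with $\mu\bigl(K_1\setminus\bigcup_{j\le M}\bxset z_j,r_j.\bigr)\le\varepsilon\msdec\mu(K)/3$; choose $c_j\in\bxset z_j,r_j.\cap K_1$ and a $(C/(\varepsilon'r_j))$‑Lipschitz cut‑off $\phi_j$ with $0\le\phi_j\le1$, $\phi_j\equiv1$ on $\bxset z_j,(1-2\varepsilon')r_j.$ and $\phi_j\equiv0$ off $\bxset z_j,r_j.$, and set
\[
\hat f:=f_1-\sum_{j=1}^{M}\phi_j\bigl(f_1-f_1(c_j)\bigr),\qquad
\hat K:=\bigcup_{j=1}^{M}\bigl(K_1\cap\bxset z_j,(1-3\varepsilon')r_j.\bigr).
\]
By disjointness of the cubes, $\hat f\equiv f_1(c_j)$ on $\bxset z_j,(1-2\varepsilon')r_j.$, $\hat f=f_1$ off $\bigcup_j\bxset z_j,r_j.$, and on the intermediate frames $\nabla\hat f=(1-\phi_j)\nabla f_1-(f_1-f_1(c_j))\nabla\phi_j$. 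Since $\diam\bxset z_j,r_j.\le2\sqrt N r_j\le R$ and $c_j\in K_1$, the small‑scale Lipschitz bound for $f_1$ gives $|f_1-f_1(c_j)|\le2\sqrt N C_0\alpha\,r_j$ on $\bxset z_j,r_j.$; combined with $|\nabla\phi_j|\le C/(\varepsilon'r_j)$ this yields $\glip\hat f.\le\glip f_1.+C(N)\alpha/\varepsilon\msdec\le\glip f.+C(N)\alpha/\varepsilon\msdec$, which is (c), and $\linfnrm\hat f-f_1.\le2\sqrt N C_0\alpha\,r_0\le\varepsilon\szdec/2$, hence $\linfnrm\hat f-f.\le\varepsilon\szdec$. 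For the measure bound in (a), $K\setminus\hat K$ is contained in $(K\setminus K_1)\cup\bigl(K_1\setminus\bigcup_j\bxset z_j,r_j.\bigr)\cup\bigcup_j\bigl(\fxset z_j,r_j,3\varepsilon'.\cap K_1\bigr)$, of $\mu$‑measure $\le\eta+\varepsilon\msdec\mu(K)/3+C N2^N\varepsilon'\mu(K)\le\varepsilon\msdec\mu(K)$ once $\varepsilon'$ is chosen small enough; and $\hat K$ is compact as a finite union of compact sets.

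Finally I would verify (b). The reason for defining $\hat K$ with the strictly smaller cubes $\bxset z_j,(1-3\varepsilon')r_j.$ instead of the ``cores'' $\{\phi_j=1\}$ is that $\hat f$ is then constant on the entire $\varepsilon'r_j$‑neighbourhood of $K_1\cap\bxset z_j,(1-3\varepsilon')r_j.$. Since the cubes are pairwise disjoint, distinct sets $\bxset z_j,(1-2\varepsilon')r_j.$ lie at mutual distance $>\rho:=\varepsilon'\min_{j\le M}r_j/2>0$; hence whenever $x,y$ satisfy $\dist(x,\hat K),\dist(y,\hat K)\le\rho$ and $|x-y|\le\rho$ they both lie in one and the same $\bxset z_j,(1-2\varepsilon')r_j.$, so $\hat f(x)=\hat f(y)=f_1(c_j)$. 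Thus $\hat f$ is $0$‑Lipschitz on $\hat K$ below scale $\rho$, and in particular asymptotically flat on $\hat K$ in the sense of Definition~\ref{defn:local-lip-beha}. I expect the only genuinely delicate points to be the two just flagged: extracting from the black‑box Theorem~\ref{thm:local-appx} the small‑scale Lipschitz control of the \emph{approximating} function near a large good set (a priori unavailable for $f$ itself), and choosing the single frame‑parameter $\varepsilon'\sim\varepsilon\msdec$ so that the frames supplied by Corollary~\ref{cor:neg_frames} simultaneously carry $\mu$‑mass $\le\varepsilon\msdec\mu(K)$ and keep the cut‑off gradients $\le C(N)\alpha/\varepsilon\msdec$.
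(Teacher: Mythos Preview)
Your proof is correct and takes a genuinely different route from the paper's. The paper argues by an infinite iteration: at stage $j$ it applies Theorem~\ref{mainI} to subtract a $C^1$ function whose differential matches that of the current approximation on a large compact (so the Weaver differential of the new function vanishes there), and then applies Theorem~\ref{thm:local-appx} to make the result $\alpha_j$-Lipschitz on a slightly smaller compact below some scale $r_j$. Choosing $\varepsilon\msdec\jonedec=\sqrt{\alpha_j}$ keeps the cumulative Lipschitz constant bounded by $\glip f.+C\alpha/\varepsilon\msdec$, and letting $\alpha_j\to0$ forces the limit $\hat f$ to be asymptotically flat on $\hat K=\bigcap_jH_j$.

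You instead invoke Theorem~\ref{thm:local-appx} \emph{once} to obtain $f_1$ which is $\alpha$-Lipschitz on $K_1$ below a fixed scale $R$, and then exploit Corollary~\ref{cor:neg_frames} to flatten $f_1$ by cutoffs on a finite family of cubes of diameter $\le R$: since $|f_1-f_1(c_j)|\lesssim\alpha r_j$ on each cube, the cutoff costs only $C(N)\alpha/\varepsilon'\simeq C(N)\alpha/\varepsilon\msdec$ in Lipschitz constant, and the resulting $\hat f$ is literally constant on a uniform neighbourhood of $\hat K$, so it is $0$-Lipschitz on $\hat K$ below a fixed positive scale --- a conclusion strictly stronger than asymptotic flatness. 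Your approach bypasses Theorem~\ref{mainI} entirely for this lemma and avoids any iteration; the paper's argument stays within the abstract metric framework of~\cite{deralb} and would transfer more directly to spaces lacking a convenient cube decomposition. One minor point: your reading of \textbf{(Apx2)} as giving small-scale Lipschitz control of the \emph{approximating} function $f_\varepsilon$ (with $C_0=1$), rather than of $f$ itself, is the intended one --- the paper uses it this way in both Lemmas~\ref{lem:step-1} and~\ref{lem:step-2}.
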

\def\onedec{^{(1)}}
\def\twodec{^{(2)}}
\def\jjdec{^{(j)}}
\def\jonedec{^{(j+1)}}
\def\lldec{^{(l)}}
\def\lonedec{^{(l+1)}}
\def\kkdec{^{(k)}}
\def\konedec{^{(k+1)}}
\begin{proof}
  \makestep{1}{Killing the gradient of $f$}
  \par We apply Theorem~\ref{mainI}
  with parameters $\varepsilon:=\varepsilon\msdec\onedec>0$,
  $\zeta:=\varepsilon\szdec\onedec>0$ to find a
  $\frac{C\alpha}{\varepsilon\onedec\msdec}$-Lipschitz function $f_1$
  and a compact $K_1\subset K$ such that
  $\|f_1\|_\infty\le\varepsilon\szdec\onedec$, $df_1 = df$ on $K_1$
  and 
  \begin{equation}
    \label{eq:uujj_1}
    \mu(K\setminus K_1)\le\varepsilon\msdec\onedec\mu(K).
  \end{equation}
  We let $g_1:=f-f_1$ and observe that $f_1$ is $(\glip
  f.+\frac{C\alpha}{\varepsilon\onedec\msdec})$-Lipschitz with
  $dg_1=0$ on $K_1$. We fix the parameters
  $(\alpha_1,\eta\szdec\onedec,\eta\msdec\onedec)\in(0,1)^3$ and use
  Theorem~\ref{thm:local-appx} to find an $(\glip
  f.+\frac{C\alpha}{\varepsilon\onedec\msdec})$-Lipschitz function
  $\hat g_1$ and a compact $H_1\subset K_1$ and a scale $r_1>0$ such that:
  \begin{align}
    \label{eq:uujj_2}
    \|\hat g_1 - g_1\|_\infty&\le\eta\szdec\onedec\\
    \label{eq:uujj_3}
    \mu(K_1\setminus H_1)&\le\eta\msdec\onedec\mu(K_1),
  \end{align}
and $\hat g_1$ is $\alpha_1$-Lipschitz on $H_1$ below scale $r_1$.
\makestep{2}{The general iteration}
\par We apply Theorem~\ref{mainI}
  with parameters $\varepsilon:=\varepsilon\msdec\jonedec>0$,
  $\zeta:=\varepsilon\szdec\jonedec>0$ to find a
  $\frac{C\alpha_j}{\varepsilon\msdec\jonedec}$-Lipschitz function
  $f_{j+1}$ and a compact $K_{j+1}\subset H_j$ such that:
  \begin{align}
    \label{eq:uujj_4}
    \|f_{j+1}\|_\infty&\le\varepsilon\szdec\jonedec\\
    \label{eq:uujj_5}
    df_{j+1}&=d\hat g_j\quad\text{on $K_{j+1}$}\\
    \label{eq:uujj_6}
    \mu(H_j\setminus K_{j+1})&\le\varepsilon\msdec\jonedec\mu(H_j).
  \end{align}
  We let $g_{j+1}:=\hat g_j-f_{j+1}$ and observe that $g_{j+1}$
  satisfies:
  \begin{equation}
    \label{eq:uujj_7}
    \glip g_{j+1}.\le \glip f. +
    \frac{C\alpha}{\varepsilon\msdec\onedec} +\sum_{l\le j}
    \frac{C\alpha_l}{\varepsilon\msdec\lonedec},
  \end{equation}
  and satisfies $dg_{j+1}$ on $K_{j+1}$. Moreover, by the inductive
  step and because of \textbf{(Apx1)} in Theorem~\ref{thm:local-appx}
  we can assume that for $l\le j$ the function $g_{j+1}$ is
  $(\alpha_l +
  \sum_{k=l}^j\frac{C\alpha_k}{\varepsilon\msdec\konedec})$-Lipschitz on $H_l$
  below scale $r_l$.
  \par We fix the parameters
  $(\alpha_{j+1},\eta\szdec\jonedec,\eta\msdec\jonedec)\in(0,1)^3$ and use
  Theorem~\ref{thm:local-appx} to find an $(\glip
  f.+\frac{C\alpha}{\varepsilon\onedec\msdec}+\sum_{l\le j}
    \frac{C\alpha_l}{\varepsilon\msdec\lonedec})$-Lipschitz function
  $\hat g_{j+1}$ and a compact $H_{j+1}\subset K_{j+1}$ and a scale $r_{j+1}\in(0,r_j)$ such that:
  \begin{align}
    \label{eq:uujj_8}
    \|\hat g_{j+1} - g_{j+1}\|_\infty&\le\eta\szdec\jonedec\\
    \label{eq:uujj_9}
    \mu(K_{j+1}\setminus H_{j+1})&\le\eta\msdec\jonedec\mu(K_{j+1}),
  \end{align}
and $\hat g_{j+1}$ is $\alpha_{j+1}$-Lipschitz on $H_{j+1}$ below
scale $r_{j+1}$. Also by \textbf{(Apx1)} in
Theorem~\ref{thm:local-appx} we see that $\hat g_{j+1}$ is $(\alpha_l +
  \sum_{k=l}^j\frac{C\alpha_k}{\varepsilon\msdec\konedec})$-Lipschitz on $H_l$
  below scale $r_l$.
  \makestep{3}{Choice of parameters}
  \par The parameters $\alpha_j$, $\varepsilon\szdec\jjdec$,
  $\eta\szdec\jjdec$ and $\eta\msdec\jjdec$ can be chosen arbitrarily
  small at each stage, while with the parameters
  $\varepsilon\msdec\jjdec$ we must be careful otherwise the Lipschitz
  constants of the functions involved at each stage will diverge to
  $\infty$. For $j\ge 1$ we thus let:
  \begin{equation}
    \label{eq:uujj_10}
    \varepsilon\msdec\jonedec:=\sqrt{\alpha_j}.
  \end{equation}
  We have the bound:
  \begin{equation}
    \label{eq:uujj_11}
    \begin{split}
      \mu(K\setminus H_j)&\le\mu(K\setminus K_1)+\sum_{l\le
        j}\mu(K_l\setminus H_l) + \sum_{k=1}^{j-1}\mu(H_l\setminus
      K_{l+1})\\
      &\le\varepsilon\msdec\onedec\mu(K) + \sum_{l\le
        j}\eta\msdec\lldec\mu(K_l) +
      \sum_{k=1}^{j-1}\varepsilon\msdec\lonedec\mu(H_l)\\
      &\le\left(\varepsilon\msdec\onedec + \sum_{l\le
        j}\eta\msdec\lldec + \sum_{k=1}^{j-1}\sqrt{\alpha_l}\right)\mu(K).
    \end{split}
  \end{equation}
  We let $H_\infty = \bigcap_jH_j$ and want this set to contain a
  significant part of the measure of $K$. Thus, if we choose
  $\varepsilon\msdec\onedec := \varepsilon\msdec/3$ and the
  $\eta\msdec\lldec$, $\alpha_l$ so that:
  \begin{align}
    \label{eq:uujj_12}
    \sum_l\eta\msdec\lldec&\le\frac{\varepsilon\msdec}{3}\\
    \label{eq:uujj_13}
    \sum_l\sqrt{\alpha_l}&\le\frac{\varepsilon\msdec}{3},
  \end{align}
  we get $\mu(K\setminus H_\infty)\le\varepsilon\msdec\mu(K)$ and can
  finally let $\hat K := H_\infty$.
  \par We now want to guarantee convergence of the sequence $\{\hat
  g_j\}$. First note:
  \begin{equation}
    \label{eq:uujj_14}
    \hat g_j - f = \sum_{l\le j}(\hat g_l - g_l) - \sum_{l\le j}f_l,
  \end{equation}
  from which we deduce:
  \begin{equation}
    \label{eq:uujj_15}
    \|\hat g_j - f\|_\infty\le\sum_{l\le j}(\eta\szdec\lldec+\varepsilon\szdec\lldec);
  \end{equation}
  we will choose the parameters $\eta\szdec\lldec$,
  $\varepsilon\szdec\lldec$ so that:
  \begin{equation}
    \label{eq:uujj_16}
    \sum_{l}(\eta\szdec\lldec+\varepsilon\szdec\lldec)\le\varepsilon\szdec.
  \end{equation}
  Now $\hat g_{j+1}-\hat g_j=\hat g_{j+1} - g_{j+1}-f_j$ and thus:
  \begin{equation}
    \label{eq:uujj_17}
    \|\hat g_{j+1}-\hat g_j\|_\infty\le \eta\szdec\jonedec+\varepsilon\szdec\jjdec.
  \end{equation}
  Therefore for $j\to\infty$ we have $\hat g_j\to \hat f$ uniformly,
  $\hat f$ being a continuous function; but:
  \begin{equation}
    \label{eq:uujj_18}
    \glip\hat g_j.\le \glip
    f.+\frac{3C\alpha}{\varepsilon\msdec}+C\sum_{l\le j}\sqrt{\alpha_j},
  \end{equation}
 and if we choose the $\alpha_j$ to satisfy:
 \begin{equation}
   \label{eq:uujj_19}
   \sum_j\sqrt{\alpha_j}\le\frac{\alpha}{\varepsilon\msdec}
 \end{equation}
 and then inflate $C$ we conclude that $\hat f$ is $(\glip f. +
 \frac{C\alpha}{\varepsilon\msdec})$-Lipschitz. Finally for $l\le j$
 the function $\hat g_j$ is $(\alpha_l+C\sum_{l\le k\le
   j}\sqrt{\alpha_k})$-Lipschitz on $H_l\supset H_\infty=\hat K$ below
 scale $r_l$, and thus $\hat f$ is asymptotically flat on $\hat K$.
\end{proof}
\begin{lem}[Step 3 of Construction]
  \label{lem:step-3}
  Let $K\subset\real^N$ be compact and assume that the decomposability
  bundle of $\mu\on K$ has constant dimension $N_0$. Assume also that
  for some $N_0$-dimensional hyperplane $\pi$ one has $\linfnrm
  \pi_x-\pi.\le\varepsilon_0$ for each $x\in K$ where
  $\varepsilon_0>0$. Let $h:\pi^{\perp}\to\real^N$ be
  $1$-Lipschitz with $h(0)=0$ and extend it to $\real^N$ as in
  Lemma~\ref{lem:step-1}. Then there are constants $C_0,C_1$, which depend only
  on $N$ and $N_0$, such that the following holds: for each choice of parameters
  $(\varepsilon\szdec,\varepsilon\msdec,r_0)\in(0,1/2)^3$ there are a
  $(\sqrt{3}+C_0\frac{\varepsilon_0}{\varepsilon\msdec^2})$-Lipschitz function 
  $g:\real^N\to\real$ and a compact $J\subset
  K$ such that:
  \begin{enumerate}[label={\normalfont(\alph*)}]
  \item $g$ is asymptotically flat on $J$,
    $\linfnrm g.\le C_1\varepsilon\szdec$ and $\mu(K\setminus
    J)\le C_1\varepsilon\msdec\mu(K)$.
  \item One can write $J$ as a finite disjoint union
    $J = \bigcup_{a=1}^{M}C_a$ and
    for each $a\in{1,\cdots, M}$ there is an $0<r_a\le r_0$ such that if $x\in
    C_a$ one has:
    \begin{equation}
      \label{eq:mina3}
      \linfballnrm {\tang.g-h}.\le C_1(\varepsilon_0 +\varepsilon\msdec^{1/(N-N_0)}).
    \end{equation}
  \end{enumerate}
\end{lem}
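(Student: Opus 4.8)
The plan is to iterate Lemmas~\ref{lem:step-1} and~\ref{lem:step-2}. Fix $\sigma:=\min(1/4,\varepsilon\msdec^{1/(N-N_0)})$, so $\sigma^{N-N_0}$ is comparable to $\varepsilon\msdec$, and pick an integer $n$ of order $\sigma^{-(N-N_0)}\log(1/\varepsilon\msdec)$ with $(1-c_0\sigma^{N-N_0})^n\le\tfrac12\varepsilon\msdec$, where $c_0=c_0(N,N_0)>0$ is produced below. I would build a decreasing chain of compacts $K=K_0\supset K_1\supset\cdots\supset K_n$ and Lipschitz increments $g_1,\dots,g_n\colon\R^N\to\R$, with $g:=\sum_{k=1}^n g_k$. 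At stage $k$, I would first shrink $K_{k-1}$ by a summably small amount so that, by the locality of the decomposability bundle \cite{alberti-marchese}, $V(\mu\on K_{k-1},\cdot)$ agrees with $V(\mu\on K,\cdot)$ on $K_{k-1}$ (hence has constant dimension $N_0$ and is $\varepsilon_0$-close to $\pi$); then apply Lemma~\ref{lem:step-1} to $K_{k-1}$ with parameters $(\varepsilon\szdec^{(k)},\varepsilon\msdec^{(k)},\sigma,r_0^{(k)})$ to obtain a $\sqrt3$-Lipschitz $g_k'$, a compact $J_k\subset K_{k-1}$, a ``good'' compact $D_k'\subset J_k$ with $\mu(D_k')\ge C^{-1}\sigma^{N-N_0}\mu(J_k)$, and a finite decomposition of $D_k'$ as in part (c) of Lemma~\ref{lem:step-1}, with scales $r_a^{(k)}\le r_0^{(k)}\le r_0$ and error $C(\varepsilon\szdec^{(k)}+\sigma)$; then, noting that being $C\varepsilon_0$-Lipschitz below a scale forces $\wfnrm dg_k'.\le C\varepsilon_0$ on $J_k$ (the curves of any Alberti representation of $\mu\on J_k$ stay in $\overline{J_k}$, where $g_k'$ is $C\varepsilon_0$-Lipschitz below scale $r^{(k)}$), apply Lemma~\ref{lem:step-2} to $g_k'$ with $\alpha=C\varepsilon_0$ and $\tilde\varepsilon\msdec^{(k)}:=\tfrac1{2C}\sigma^{N-N_0}$ to replace $g_k'$ by $g_k$, asymptotically flat on a compact $\hat K_k\subset J_k$ with $\mu(J_k\setminus\hat K_k)\le\tilde\varepsilon\msdec^{(k)}\mu(J_k)$ and $\glip g_k.\le\sqrt3+C\varepsilon_0/\tilde\varepsilon\msdec^{(k)}$; finally set $D_k:=D_k'\cap\hat K_k$, so $\mu(D_k)\ge c_0\sigma^{N-N_0}\mu(K_{k-1})$, and let $K_k$ be a compact subset of $\hat K_k\setminus D_k'$ losing arbitrarily little measure. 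Then $\mu(K_k)\le(1-c_0\sigma^{N-N_0})\mu(K_{k-1})+(\mathrm{leak}_k)$, so if all leaks sum to $\le\tfrac12\varepsilon\msdec\mu(K)$ one gets $\mu(K_n)\le\varepsilon\msdec\mu(K)$; with $J:=\bigcup_{k\le n}D_k$ — a finite disjoint union of compacts, since for $l<k$ we have $D_k\subset K_{k-1}\subset K_l$ disjoint from $D_l'\supset D_l$ — one gets $\mu(K\setminus J)\le\tfrac32\varepsilon\msdec\mu(K)$, and $J$ inherits a finite decomposition into pieces with scales $\le r_0$.

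The structural device I would exploit is that the scale $r_0^{(k)}$ may be taken as small as one pleases — in particular smaller than the asymptotic-flatness scales $s_1,\dots,s_{k-1}$ of the earlier increments and than $\dist(K_{k-1},\bigcup_{l<k}D_l')$ (positive, as a distance between disjoint compacts). Since Lemma~\ref{lem:step-1} and the construction underlying Lemma~\ref{lem:step-2} are local, $\spt g_k$ then lies in a neighbourhood of $K_{k-1}$ disjoint from every earlier $D_l'$ and contained in the asymptotic-flatness region of every earlier $g_l$. I would also take $\linfnrm g_k.$ summably small and moreover small compared to every already-fixed blowup scale $r_a^{(l)}$ with $l<k$, keep $\varepsilon\szdec^{(k)}\le\varepsilon\msdec^{1/(N-N_0)}$, and take $\tilde\varepsilon\szdec^{(k)}\le\varepsilon\msdec^{1/(N-N_0)}\min_a r_a^{(k)}$. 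Every one of these requirements involves only quantities determined by stage $k$, so the construction is not circular; the sup-norm bound $\linfnrm g.\le\varepsilon\szdec$ and the measure bound then follow by bookkeeping.

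The only genuinely non-routine point — the main obstacle — is the Lipschitz constant of $g$: naive summation would only give a bound of order $n(\sqrt3+C\varepsilon_0/\tilde\varepsilon\msdec^{(k)})$, which with $n\sim\varepsilon\msdec^{-1}\log(1/\varepsilon\msdec)$ is far worse than the asserted $\sqrt3+C_0\varepsilon_0/\varepsilon\msdec^2$. The nesting just described fixes this. Given $x$, let $k^\ast$ be the largest index with $x\in\spt g_{k^\ast}$ (if none, $\nabla g(x)=0$). For $k>k^\ast$ one has $\nabla g_k(x)=0$. For $k<k^\ast$, $x$ lies in $\spt g_{k^\ast}$, hence within $s_k$ of $K_{k^\ast-1}\subset\hat K_k$; prescribing the flatness parameter of $g_k$ at scale $s_k$ to be $\alpha_k:=\varepsilon_0 2^{-k}$, this gives $|\nabla g_k(x)|\le\alpha_k$. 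For $k=k^\ast$, $|\nabla g_{k^\ast}(x)|\le\glip g_{k^\ast}.\le\sqrt3+C\varepsilon_0/\tilde\varepsilon\msdec^{(k^\ast)}$. Summing,
\[
|\nabla g(x)|\;\le\;\sqrt3+\sum_k\alpha_k+\max_k\frac{C\varepsilon_0}{\tilde\varepsilon\msdec^{(k)}}\;\le\;\sqrt3+\varepsilon_0+\frac{2C^2\varepsilon_0}{\sigma^{N-N_0}}\;\le\;\sqrt3+\frac{C_0\varepsilon_0}{\varepsilon\msdec^2},
\]
with $C_0=C_0(N,N_0)$, since $\sigma^{N-N_0}\gtrsim\varepsilon\msdec$ and $\varepsilon\msdec<1/2$. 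The point is that neither the base slopes $\sqrt3$ nor the terms $C\varepsilon_0/\tilde\varepsilon\msdec^{(k)}$ accumulate, because the relevant regions of the increments are essentially nested and each increment has been flattened where the later ones act.

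It remains to verify that $g$ is asymptotically flat on $J$ and that (b) holds. For the former: given $\varepsilon'>0$, prescribe the flatness parameters $\alpha_k'$ with $\sum_k\alpha_k'\le\varepsilon'$ and let $r$ be smaller than all mutual distances of the finitely many $D_k$, than all $\dist(K_{k-1},\bigcup_{l<k}D_l')$, and than the associated flatness scales; if $y,z$ lie within $r$ of $J$ with $|y-z|\le r$, both lie within $r$ of a single $D_{k_0}\subset\hat K_{k_0}$, whence $g_k$ is $\alpha_k'$-Lipschitz near $y,z$ for $k\le k_0$ (using $D_{k_0}\subset K_{k_0-1}\subset\hat K_k$ for $k<k_0$) while $g_k\equiv0$ near $D_{k_0}$ for $k>k_0$, so $|g(y)-g(z)|\le\varepsilon'|y-z|$. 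For (b): at $x$ in a piece of $D_k$ with its scale $r_a^{(k)}$, the term $g_k$ contributes a rescaling within $2\tilde\varepsilon\szdec^{(k)}/r_a^{(k)}+C(\varepsilon\szdec^{(k)}+\sigma)=O(\varepsilon\msdec^{1/(N-N_0)})$ of $h$; each term $g_l$ with $l<k$ contributes at most $\alpha_l$ (as $x\in\hat K_l$ and $r_a^{(k)}\le s_l$), totalling $\le\varepsilon_0$; and each term $g_l$ with $l>k$ contributes at most $2\linfnrm g_l./r_a^{(k)}$, which sums to $O(\varepsilon\msdec^{1/(N-N_0)})$ by the choice of $\linfnrm g_l.$ — giving the bound $C_1(\varepsilon_0+\varepsilon\msdec^{1/(N-N_0)})$.
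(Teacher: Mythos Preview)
Your overall architecture matches the paper's: iterate Lemma~\ref{lem:step-1} and Lemma~\ref{lem:step-2} finitely many times, collect the good pieces $D_k$ into $J$, and control the Lipschitz constant by arranging that at any point only one increment carries its full slope while the earlier ones are already flat there. The choice $\sigma\sim\varepsilon\msdec^{1/(N-N_0)}$, the geometric decay of the leftover mass, and the error bookkeeping for~(b) are all as in the paper.

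There is, however, a genuine gap. Your Lipschitz argument hinges on the claim that ``Lemma~\ref{lem:step-1} and the construction underlying Lemma~\ref{lem:step-2} are local'', so that $\spt g_k$ sits in a small neighbourhood of $K_{k-1}$. That is not granted by those lemmas: both pass through Theorem~\ref{thm:local-appx}, which only guarantees $\linfnrm f_\varepsilon-f.\le\varepsilon$, not that $f_\varepsilon=f$ outside a neighbourhood of $K$. The approximant there is built on a cylinder $Z\times\real$ and may differ from the input everywhere. Consequently your $g_k$ need not vanish away from $K_{k-1}$, and then both ``$\nabla g_k(x)=0$ for $k>k^\ast$'' and ``$x\in\spt g_{k^\ast}\Rightarrow\dist(x,\hat K_k)\le s_k$ for $k<k^\ast$'' fail, and the pointwise gradient estimate collapses.

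The remedy is exactly what the paper does and what your sketch omits: after producing the $k$-th increment, multiply it by a Lipschitz cutoff which is $1$ near $K_{k-1}$ and $0$ on a neighbourhood of $\bigcup_{l<k}D_l'$. Since these are disjoint compacts at distance $\tau_{k-1}>0$ and the increment's sup-norm can be taken $\le\alpha_{k-1}\tau_{k-1}$, the cutoff costs only an additive $\alpha_{k-1}$ in the Lipschitz constant (this is the paper's construction $\hat g_{j+1}\max\bigl(0,1-\tfrac{2}{\tau_j}\dist(\cdot,(J\jonehalfdec)_{\tau_j/2})\bigr)$). With that fix in place your argument goes through; in fact, because you keep $\tilde\varepsilon\msdec^{(k)}\sim\sigma^{N-N_0}\sim\varepsilon\msdec$ uniform in $k$, your nesting device yields $\glip g.\le\sqrt3+O(\varepsilon_0/\varepsilon\msdec)$, slightly better than the stated $\varepsilon_0/\varepsilon\msdec^2$, which in the paper comes from the special choice $\hat\varepsilon\msdec^{(1)}=(\hat\varepsilon\msdec^{(2)})^2$.
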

\def\onedec{^{(1)}}
\def\twodec{^{(2)}}
\def\jjdec{^{(j)}}
\def\lldec{^{(l)}}
\def\kkdec{^{(k)}}
\def\konehalfdec{_{k\frac{1}{2}}}
\def\jonedec{^{(j+1)}}
\def\konedec{^{(k+1)}}
\def\onehalfdec{_{1\frac{1}{2}}}
\def\jonehalfdec{_{j\frac{1}{2}}}
\begin{proof}
  \makestep{1}{Applying Lemma~\ref{lem:step-1}}
  \par We let $C$ denote the maximum of the constants $C$, $C_0$ and
  $C_1$ from Lemmas~\ref{lem:step-1} and \ref{lem:step-2}. We fix
  parameters $(\varepsilon\szdec\onedec, \varepsilon\msdec\onedec,
  \sigma, r\onedec)\in(0,1)^4$ to be chosen later. For the moment we
  just remark we will need $r\onedec \le r_0$ and
  $\frac{C^{-1}\sigma^{N-N_0}}{2}>\varepsilon\msdec\onedec$. We now
  apply Lemma~\ref{lem:step-1} to obtain an $\sqrt{3}$-Lipschitz function
  $g_1$ and compact subsets $J_1\gdec\subset J_1\subset K$ and a scale $0<\rho_1\le
  r\onedec$ such that:
  \begin{description}
  \item[($g_1$:a)] $\mu(K\setminus J_1)\le\varepsilon\msdec\onedec\mu(K)$,
    $\linfnrm g_1.\le\varepsilon\szdec\onedec$ and $g_1$ is
    $C\varepsilon_0$-Lipschitz on $J_1$ below scale
    $\rho_1$.
  \item[($g_1$:b)] $\mu(J_1\gdec)\ge
    C^{-1}\sigma^{N-N_0}\mu(J_1)$.
  \item[($g_1$:c)] One can decompose $J_1\gdec$ as a finite disjoint
    union $\bigcup_{a=1}^{M_1}C_a\onedec$ such that for each $a\in\{1,\cdots,M_1\}$ there is an
  $0<r_a\onedec\le r\onedec$ such that whenever
    $x\in C_a\onedec$ one has:
    \begin{equation}
      \label{eq:moto1}
      \linfballnrm {\tang x,r_a\onedec.g_1-h}.\le C(\varepsilon\szdec\onedec+\sigma).
      \end{equation}
    \end{description}
   \makestep{2}{Applying Lemma~\ref{lem:step-2}} 
   \par We fix
  parameters $(\hat\varepsilon\szdec\onedec,
  \hat\varepsilon\msdec\onedec)\in(0,1)^2$ to be chosen later. We
  apply Lemma~\ref{lem:step-2} (to the function $g_1$ and the compact
  set $J_1$) to find a Lipschitz function $\hat g_1$ and a compact set
  $\hat J_1\subset J_1$ such that:
  \begin{description}
  \item[($\hat g_1$:a)] $\linfnrm \hat g_1- g_1.\le\hat\varepsilon\onedec\szdec$ and $\mu(J_1\setminus
  \hat J_1)\le\hat\varepsilon\msdec\onedec\mu(J_1)$.
\item[($\hat g_1$:b)] $\hat g_1$ is asymptotically flat on $\hat J_1$.
\item[($\hat g_1$:c)] $\hat g_1$ is $(\sqrt{3}
 +C^2\frac{\varepsilon_0}{\hat\varepsilon\msdec\onedec})$-Lipschitz.
\end{description}
As in \textbf{($g_1$:c)} pick $x\in C_a\onedec$ and
combine~(\ref{eq:moto1}) with \textbf{($\hat g_1$:a)} to obtain:
\begin{equation}
  \label{eq:moto2}
  \begin{split}
  \linfballnrm {\tang x,r_a\onedec.\hat g_1-h}. &\le \linfballnrm
  {\tang x,r_a\onedec.\hat g_1- \tang x,r_a\onedec. g_1}. + 
  C(\varepsilon\szdec\onedec+\sigma)\\
  &\le\frac{2\hat\varepsilon\szdec\onedec}{r_a\onedec} +  C(\varepsilon\szdec\onedec+\sigma),
\end{split}
\end{equation}
and observe that $\hat\varepsilon\szdec\onedec$ will be chosen later
to be insignificant next to $\min_{1\le a\le M_1}{r_a\onedec}$.
\makestep{3}{The construction of $g_2$, $\hat g_2$ and $G_2$}
\par We now want to follow the first two steps, but first need an
intermediate construction. Fix parameters $(\alpha_1,
\varepsilon\onedec\spdec)\in(0,1)^2$ and choose
$R_1\le\alpha_1^2\min_{1\le a\le M_1}r_a\onedec$ such that $\hat g_1$
is $\alpha_1$-Lipschitz on $\hat J_1$ below scale $R_1$. Find a
compact $J\onehalfdec\subset\hat J_1\setminus J_1\gdec$ such that:
\begin{equation}
  \label{eq:moto3}
  \mu\left(
    (\hat J_1\setminus J_1\gdec) \setminus J\onehalfdec
\right) \le \varepsilon\spdec\onedec \mu(\hat J_1\setminus J_1\gdec),
\end{equation}
and find a $\tau_1>0$ such that the $\tau_1$-neighbourhoods of
$J\onehalfdec$ and $J_1\gdec$ are disjoint.
\par We now fix parameters $(\varepsilon\szdec\twodec, \varepsilon\msdec\twodec,
  r\twodec)\in(0,1)^3$ (for
  the moment imposing the constraint that $2r\twodec < \tau_1$)
 and apply Lemma~\ref{lem:step-1} using the parameters
$(\varepsilon\szdec\twodec, \varepsilon\msdec\twodec,
  \sigma, r\twodec)$ and the compact set $J\onehalfdec$ to obtain an $\sqrt{3}$-Lipschitz function
  $g_2$ and compact subsets $J_2\gdec\subset J_2\subset J\onehalfdec$ and a scale $0<\rho_2\le
  r\twodec$ such that:
  \begin{description}
  \item[($g_2$:a)] $\mu(J\onehalfdec\setminus J_2)\le\varepsilon\msdec\twodec\mu(J\onehalfdec)$,
    $\linfnrm g_2.\le\varepsilon\szdec\twodec$ and $g_2$ is
    $C\varepsilon_0$-Lipschitz on $J_2$ below scale
    $\rho_2$.
  \item[($g_2$:b)] $\mu(J_2\gdec)\ge
    C^{-1}\sigma^{N-N_0}\mu(J_2)$.
  \item[($g_2$:c)] One can decompose $J_2\gdec$ as a finite disjoint
    union $\bigcup_{a=1}^{M_2}C_a\twodec$ such that for each $a\in\{1,\cdots,M_2\}$ there is an
  $0<r_a\twodec\le r\twodec$ such that whenever
    $x\in C_a\twodec$ one has:
    \begin{equation}
      \label{eq:moto4}
      \linfballnrm {\tang x,r_a\twodec.g_2-h}.\le C(\varepsilon\szdec\twodec+\sigma).
      \end{equation}
    \end{description}
   \par We fix
  parameters $(\hat\varepsilon\szdec\twodec,
  \hat\varepsilon\msdec\twodec)\in(0,1)^2$ to be chosen later. We
  apply Lemma~\ref{lem:step-2} (to the function $g_2$ and the compact
  set $J_2$) to find a Lipschitz function $\hat g_2$ and a compact set
  $\hat J_2\subset J_2$ such that:
  \begin{description}
  \item[($\hat g_2$:a)] $\linfnrm \hat g_2 - g_2.\le\hat\varepsilon\twodec\szdec$ and $\mu(J_2\setminus
  \hat J_2)\le\hat\varepsilon\msdec\twodec\mu(J_2)$.
\item[($\hat g_2$:b)] $\hat g_2$ is asymptotically flat on $\hat J_2$.
\item[($\hat g_2$:c)] $\hat g_2$ is $(\sqrt{3} +C^2\frac{\varepsilon_0}{\hat\varepsilon\msdec\twodec})$-Lipschitz.
\end{description}
\par We now modify $\hat g_2$ so that it vanishes on the
$(\tau_1)$-neighbourhood of $J_1\gdec$ and stays the same on the
$(\tau_1/2)$-neighbourhood of $J\onehalfdec$. This is accomplished by
replacing $\hat g_2$ with
\begin{equation}
  \label{eq:moto5}
  \hat g_2\max\left( 0, 
    1 - \frac{2}{\tau_1}\dist(\cdot, (J\onehalfdec)_{\tau_1/2})
\right),
\end{equation} where $(J\onehalfdec)_{\tau_1/2}$ denotes the
$(\tau_1/2)$-neighbourhood of $J\onehalfdec$. As $(J_1\gdec)_{\tau_1}$
and $(J\onehalfdec)_{\tau_1}$ are disjoint, $\hat g_2$ now vanishes on
$(J_1\gdec)_{\tau_1}$. One also obtains an upper bound on the
Lipschitz constant of $\hat g_2$:
\begin{equation}
  \label{eq:moto6}
  \glip\hat g_2. \le \sqrt{3} +
  C^2\frac{\varepsilon_0}{\hat\varepsilon\twodec\msdec} 
  + 2\frac{\varepsilon\szdec\twodec + \hat\varepsilon\szdec\twodec}{\tau_1}.
\end{equation}
To get the last term in~(\ref{eq:moto6}) to be $\le\alpha_1$ we impose
the restriction $2(\varepsilon\szdec\twodec +
\hat\varepsilon\szdec\twodec)\le\alpha_1\tau_1$.
\par We now let $G_2=\hat g_1 + \hat g_2$ and to get a good upper
bound on $\glip G_2.$ impose the restriction $2(\varepsilon\szdec\twodec +
\hat\varepsilon\szdec\twodec)\le\alpha_1 R_1$. In fact, we now verify
that 
\begin{equation}
  \label{eq:moto7}
  \glip G_2. \le  \sqrt{3} +
  C^2\frac{\varepsilon_0}{\min(\hat\varepsilon\onedec\msdec,
\hat\varepsilon\twodec\msdec)} 
  + 2\alpha_1;
\end{equation} the first case is when $d(x,y)\ge R_1$ in which we
have:
\begin{equation}
  \label{eq:moto8}
  \begin{split}
    \left| G_2(x) - G_2(y)\right| &\le \left|\hat g_1(x) - \hat
      g_1(y)\right| + 2\linfnrm\hat g_2.\\
    &\le\left(\sqrt{3} +
  C^2\frac{\varepsilon_0}{\hat\varepsilon\onedec\msdec}+\alpha_1\right)d(x,y),
  \end{split}
\end{equation}
and the second case is when $d(x,y)\le R_1$ in which case we have:
\begin{equation}
  \label{eq:moto9}
  \begin{split}
    \left| G_2(x) - G_2(y)\right| &\le \left|\hat g_2(x) - \hat
      g_2(y)\right| +
    \left|\hat g_1(x) - \hat
      g_1(y)\right|\\
 &\le \left(\sqrt{3} +
  C^2\frac{\varepsilon_0}{\hat\varepsilon\twodec\msdec}+2\alpha_1\right)d(x,y).
  \end{split}
\end{equation}
We now show that $G_2$ is asymptotically flat on $\hat J_2\cup
(J_1\gdec\cap\hat J_1)$. In fact, $\hat g_1$ is asymptotically flat on
$\hat J_1$ and hence on $\hat J_2\cup
(J_1\gdec\cap\hat J_1)$, and $\hat g_2$ is asymptotically flat on
$\hat J_2$ and vanishes on $(J_1\gdec)_{\tau_1}$. We finally establish
analogues of~(\ref{eq:moto2}) and (\ref{eq:moto4}). Pick $x\in
C_a\onedec$; then:
\begin{equation}
  \label{eq:moto10}
  \begin{split}
  \linfballnrm{\tang x, r_a\onedec. G_2 - h}. &\le \linfballnrm{\tang x, r_a. \hat
    g_1- h}. + \frac{2\linfnrm \hat g_2.}{r_a\onedec}\\
&\le \frac{2\hat\varepsilon\szdec\onedec}{r_a\onedec} +
C(\varepsilon\szdec\onedec+\sigma) + \alpha_1.
\end{split}
\end{equation}
Pick $x\in 
C_a\twodec$;    \pcreatenrm{linfcstball}{\infty,B(x,r_a\twodec)}{\|}then:
\begin{equation}
  \label{eq:moto11}
  \begin{split}
    \linfballnrm{\tang x, r_a\twodec. G_2 - h}.&\le 
 \frac{\linfcstballnrm\hat
      g_1 - \hat g_1(x).}{r_a\twodec}
    + \linfballnrm{\tang x, r_a\twodec. \hat
    g_2- h}.\\
&\le\alpha_1 + C(\varepsilon\szdec\twodec+\sigma) + 2\frac{\hat\varepsilon\szdec\twodec}{r_a\twodec}.
  \end{split}
\end{equation}
In connection with~(\ref{eq:moto11}) we observe that $\hat\varepsilon\szdec\twodec$ will be chosen later
to be insignificant next to $\min_{1\le a\le M_2}{r_a\twodec}$.
\makestep{4}{The construction of $G_{j+1}$, for $j\ge 2$}
\par We fix parameters $(\alpha_j,\varepsilon\jjdec\spdec)\in(0,1)^2$ 
and choose 
\begin{equation}
\label{eq:moto12}
R_j\le\alpha_j^2\min\{r_a\lldec: 1\le l\le j, 1\le a\le M_l\}
\end{equation}
such that $G_j$
is $\alpha_j$-Lipschitz on $\hat J_j$ below scale $R_j$. We then find
a compact set $J\jonehalfdec\subset\hat J_j\setminus J_j\gdec$ such that:
\begin{equation}
  \label{eq:moto13}
  \mu\left(
    (\hat J_j\setminus J_j\gdec) \setminus J\jonehalfdec
\right) \le \varepsilon\spdec\jjdec \mu(\hat J_j\setminus J_j\gdec),
\end{equation}
and find a $\tau_j>0$ such that $(J\jonehalfdec)_{\tau_j}\cap
(J_j\gdec)_{\tau_j}=\emptyset$.
\par We then construct $g_{j+1}$ and $\hat g_{j+1}$ as we did for
$g_2$ and $\hat g_2$ in \texttt{Step3}: one needs only to adjust the
indexes. We will refer to the variants of properties
\textbf{($g_2$:a)}--\textbf{($g_2$:c)} and \textbf{($\hat
  g_2$:a)}--\textbf{($\hat g_2$:c)} by \textbf{($g_{j+1}$:a)}--\textbf{($g_{j+1}$:c)} and \textbf{($\hat
  g_{j+1}$:a)}--\textbf{($\hat g_{j+1}$:c)}.
\par We then have to modify $\hat g_{j+1}$ so that it vanishes on
$(J_j\gdec)_{\tau_j}$ and stays the same on
$(J\jonehalfdec)_{\tau_j/2}$. This is accomplished by replacing $\hat
g_{j+1}$ with:
\begin{equation}
  \label{eq:moto14}
  \hat g_{j+1}\max\left(0, 1 - \frac{2}{\tau_j}\dist(\cdot, (J\jonehalfdec)_{\tau_j/2})\right);
\end{equation}
we also record the upper bound
\begin{equation}
  \label{eq:moto15}
  \glip\hat g_{j+1}. \le \sqrt{3} +
  C^2\frac{\varepsilon_0}{\hat\varepsilon\jonedec\msdec} 
  + 2\frac{\varepsilon\szdec\jonedec + \hat\varepsilon\szdec\jonedec}{\tau_j},
\end{equation}
and impose
the restriction $2(\varepsilon\szdec\jonedec +
\hat\varepsilon\szdec\jonedec)\le\alpha_j\tau_j$
to get the last term in~(\ref{eq:moto15}) to be $\le\alpha_j$.
\par We now let $G_{j+1} = G_j + \hat g_{j+1}$ and, akin to
(\ref{eq:moto7}), we obtain the upper bound:
\begin{equation}
  \label{eq:moto16}
  \glip G_{j+1}.\le \sqrt{3} + C^2\frac{\varepsilon_0}{\min_{l\le
      j+1}\hat\varepsilon\lldec\msdec} + 2\alpha_j,
\end{equation}
after imposing the restriction $2(\varepsilon\szdec\jonedec +
\hat\varepsilon\szdec\jonedec)\le\alpha_j R_j$.
\par Compared to \texttt{Step3}, the analogues of~(\ref{eq:moto10}),
(\ref{eq:moto11}) require some modifications because the errors
cumulate additively; keep also in mind that for us an empty sum like
$\sum_{j<1}\alpha_j$ defaults to $0$. For $x\in C_a\lldec$ where $l\le
j$ we obtain: \pcreatenrm{mylinfcstball}{\infty,B(x,r_a\lldec)}{\|}then:
\begin{equation}
  \label{eq:moto17}
  \begin{split}
    \linfballnrm{\tang x, r_a\lldec. G_{j+1} - h}.&\le 
 \frac{\mylinfcstballnrm G_{l-1} - G_{l-1}(x).}{r_a\lldec}
    + \linfballnrm{\tang x, r_a\lldec. \hat
    g_l- h}.\\
&+ \frac{\sum_{l < k\le j+1}\mylinfcstballnrm \hat g_k - \hat g_k(x).}{r_a\lldec}.
  \end{split}
\end{equation}
The first term in~(\ref{eq:moto17}) is bounded observing that
$G_{l-1}$ is $\alpha_{l-1}$ Lipschitz on $B(x, r_a\lldec)$ for $k<
l$. The second term is bounded using \textbf{($g_l$:c)} and
\textbf{($\hat g_l$:a)}. Finally the third term is bounded using
$\mylinfcstballnrm \hat g_k - \hat g_k(x).\le 2\|\hat g_k\|_\infty$
and minding that we imposed the restriction $2(\varepsilon\szdec\konedec +
\hat\varepsilon\szdec\konedec)\le\alpha_k R_k$. We thus get
from~(\ref{eq:moto17}):
\begin{equation}
  \label{eq:moto18}
  \begin{split}
    \linfballnrm{\tang x, r_a\lldec. G_{j+1} - h}.&\le \alpha_{l-1} +
    C(\varepsilon\szdec\lldec + \sigma) +
    2\frac{\hat\varepsilon\szdec\lldec}{r_a\lldec}
    +\sum_{l < k < j+1}\alpha_k\\
&=\sum_{\substack{l-1\le k < j+1\\ k\ne l}}\alpha_k +
C(\varepsilon\szdec\lldec + \sigma) +
2\frac{\hat\varepsilon\szdec\lldec}{r_a\lldec}.
\end{split}
\end{equation}
\par Pick $x\in C_a\jonedec$; then: \pcreatenrm{mmylinfcstball}{\infty,B(x,r_a\jonedec)}{\|}
\begin{equation}
  \label{eq:moto19}
  \begin{split}
    \linfballnrm{\tang x, r_a\jonedec. G_{j+1} - h}.&\le 
 \frac{\mmylinfcstballnrm G_{j} - G_{j}(x).}{r_a\jonedec}
    + \linfballnrm{\tang x, r_a\jonedec. \hat
    g_{j+1}- h}..
  \end{split}
\end{equation}
Using that $G_j$ is $\alpha_j$-Lipschitz on $B(x, r_a\jonedec)$ and
\textbf{($g_{j+1}$:c)}, \textbf{($\hat g_{j+1}$:a)} we finally get:
\begin{equation}
  \label{eq:moto20}
    \linfballnrm{\tang x, r_a\jonedec. G_{j+1} - h}.\le \alpha_j +
C(\varepsilon\szdec\jonedec + \sigma) +
2\frac{\hat\varepsilon\szdec\jonedec}{r_a\jonedec}.
\end{equation}

\makestep{5}{Choice of the parameters}
\par There are two kinds of parameters:
\begin{itemize}
\item Parameters that can be chosen arbitrarily small:
  $\varepsilon\szdec\lldec$, $\hat\varepsilon\szdec\lldec$,
  $\varepsilon\msdec\lldec$, $\varepsilon\spdec\lldec$ and $\alpha_l$.
\item Parameters that can't be chosen arbitrarily small:
  $\hat\varepsilon\msdec\lldec$ and $\sigma$. In particular, as
  $\hat\varepsilon\msdec\lldec\to0$ the Lipschitz constant of $G_l$ blows-up.
\end{itemize}
We thus choose:
\begin{equation}
  \label{eq:moto_21}
  \hat\varepsilon\msdec\lldec = 
  \begin{cases}
    \min\left( \frac{\varepsilon\msdec}{16}, \frac{1}{180C}
       \right) &\text{for $l\ge2$,}\\
    (\hat\varepsilon\msdec\twodec)^2&\text{for $l=1$.}
  \end{cases}
\end{equation}
We now estimate:
\begin{equation}
  \label{eq:moto_22}
  \mu(J_1\setminus J_1\gdec)\le\mu(J_1\setminus\hat J_1) + \mu(\hat
  J_1\setminus J_1\gdec)
  \le(\hat\varepsilon\msdec\onedec + 1 - C^{-1}\sigma^{N-N_0})\mu(J_1);
\end{equation}
thus, if we choose $\sigma = (1.5
C\hat\varepsilon\msdec\twodec)^{1/(N-N_0)}$ we get:
\begin{equation}
  \label{eq:moto_23}
  \mu(J_1\setminus J_1\gdec)\le\left( 1 - \frac{\hat\varepsilon\msdec\twodec}{2}
                               \right)\mu(J_1).
\end{equation}
For $l\ge 2$ the same argument for~(\ref{eq:moto_23}) yields:
\begin{equation}
  \label{eq:moto_24}
  \mu(J_l\setminus J_l\gdec)\le\left( 1 - \frac{\hat\varepsilon\msdec\twodec}{2}
                               \right)\mu(J_l);
\end{equation}
as $J_l\subset J_{l-1}\setminus J_{l-1}\gdec$ induction grants:
\begin{align}
  \label{eq:moto_25}
  \mu(J_l)&\le\left(1-\frac{\hat\varepsilon\msdec\twodec}{2}\right)^{l-1}\mu(K)\\
  \label{eq:moto_26}
  \mu(J_l\setminus J_l\gdec)&\le\left(1-\frac{\hat\varepsilon\msdec\twodec}{2}\right)^l\mu(K).
\end{align}
The construction in \texttt{Step 3} will be iterated finitely many
times and we just need an upper bound for the smallest number of
iterations which will give the desired approximation in measure (third
inequality in (a)). We get:
\begin{equation}
  \label{eq:moto_27}
  \begin{split}
    \mu\left(
         K\setminus\bigcup_{k=1}^l(\hat J_k\cap J_k\gdec)
       \right) &\le \mu(K\setminus J_1) +
       \sum_{k=1}^{l-1}\mu(J\konehalfdec\setminus
       J_{k+1})+\sum_{k=1}^l\mu(J_k\setminus\hat J_k)\\
       &+\sum_{k=1}^{l-1}\mu\left((\hat J_k\setminus
         J_k\gdec)\setminus J\konehalfdec\right) + \mu(J_l\setminus
       J_l\gdec)\\
       &\le\biggl(
         \varepsilon\msdec\onedec +
         \sum_{k=1}^{l-1}\varepsilon\msdec\konedec +
         \hat\varepsilon\onedec\sum_{k=1}^l\bigl(1-\frac{\hat\varepsilon\twodec}{2}\bigr)^{k-1}\\
         &+\sum_{k=1}^{l-1}\varepsilon\spdec\kkdec +
         \bigl(1-\frac{\hat\varepsilon\twodec}{2}\bigr)^{l}
         \biggr)\mu(K).
  \end{split}
\end{equation}
Recall that at each stage we can choose $\varepsilon\msdec\kkdec$ and
$\varepsilon\spdec\kkdec$ arbitrarily small and observe that:
\begin{equation}\label{eq:moto_28}
\hat\varepsilon\msdec\onedec\sum_{k=1}^l\bigl(1-\frac{\hat\varepsilon\twodec}{2}\bigr)^{k-1} \le2\frac{\hat\varepsilon\msdec\onedec}{\hat\varepsilon\msdec\twodec}=2\hat\varepsilon\msdec\twodec\le\varepsilon\msdec.
\end{equation}
Thus there is a universal constant $C_1$ such that if
$l:=\lceil|\log_{1-\hat\varepsilon\msdec\twodec/2}\varepsilon\msdec|\rceil$
one has:
\begin{equation}
  \label{eq:moto_29}
  \mu\left(
    K\setminus\bigcup_{k=1}^l(\hat J_k\setminus J_k\gdec)
     \right)\le C_1\varepsilon\msdec\mu(K).
\end{equation}
Thus we will let $J:=\bigcup_{k=1}^l(\hat J_k\setminus J_k\gdec)$. We
then have:
\begin{equation}
  \label{eq:moto_30}
  \|G_l\|_\infty\le 2\sum_{k=1}^l(\varepsilon\szdec\kkdec + \hat\varepsilon\szdec\kkdec)
\end{equation}
which can be made $\le C_1\varepsilon\szdec$ as the parameters
$\varepsilon\szdec\kkdec$, $\hat\varepsilon\szdec\kkdec$ can be chosen
arbitrarily small at each stage. Also the parameters $\alpha_k$ can be
chosen arbitrarily small; thus, as
$\hat\varepsilon\msdec\twodec\simeq\varepsilon\msdec^2$ we obtain a
universal constant $C_0$ such that:
\begin{equation}
  \label{eq:moto_31}
  \glip G_l.\le\left(\sqrt{3} + C_0\frac{\varepsilon_0}{\varepsilon\msdec^2}\right).
\end{equation}
We thus let $g:=G_l$ and~(\ref{eq:mina3}) now follows from
(\ref{eq:moto18}), (\ref{eq:moto20}) if we choose at each step the parameters
$\alpha_k$, $\varepsilon\kkdec\szdec$ and
$\hat\varepsilon\szdec\kkdec$ sufficiently small.
\end{proof}
\subsection{Proof of Theorem~\ref{main}(II)}
The proof will be achieved by an iteration of the Lemma \ref{lemmafinale}, below which is a simple consequence of Lemma \ref{lem:step-3}. More precisely, the function $g$ required in Definition \ref{def_presc} is obtained as a sum of functions $g_i$ given by Lemma \ref{lemmafinale}. One of the subtle points in the process is that in principle it could be $\glip g_i.\geq\sqrt 3$ for every $i$, hence the sum could fail to be Lipschitz in general. However, the fact that the $g_i$'s can be chosen asymptotically flat and with arbitrarily small norm, allows one to control the Lipschitz constant of the sum. 

%

\begin{lem}\label{lemmafinale}
  Let $f:\Omega\to {\rm{Lip}}(B,0)$ be a Borel map such that, for $\mu$-a.e. $x\in\Omega$, $f(x)\in C(\mu,x)$ and moreover the corresponding function $L$ in \eqref{e:admissible} vanishes. Let $K\subset\Omega$ be a compact set such that $\glip f(x).\leq 1$ for every $x\in K$ and let $\varepsilon>0$ be fixed. There are constants $C_0$ and $C_1$, depending only on $N$, a
  $(C_0+\sqrt 3)$-Lipschitz function 
  $g:\Omega\to\real$ and a compact $J\subset
  K$ such that:
  \begin{enumerate}[label={\normalfont(\alph*)}]
  \item $g$ is asymptotically flat on $J$,
    $\linfnrm g.\le \varepsilon$ and $\mu(K\setminus
    J)\le 3 C_1\varepsilon\mu(K)$.
  \item There are $0<r_1\le r_0\le\varepsilon$ and for every $x\in J$ there are $r_1\le r(x)\le r_0$ such that:
    \begin{equation}
      \label{eq:lemmafinale}
      \linfballnrm {\tang {x,r(x)}. g-f(x)}.\le 3C_1\varepsilon. 
    \end{equation}
  \item $g$ is supported on the tubular neighborhood of $K$ with radius $\varepsilon$.
  \end{enumerate}
\end{lem}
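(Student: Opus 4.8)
The plan is to deduce Lemma~\ref{lemmafinale} from Lemma~\ref{lem:step-3} by first decomposing $K$ into finitely many measurable pieces on each of which the hypotheses of that lemma can be met, and then gluing the functions it produces so that they have pairwise disjoint supports.

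For the decomposition: since $x\mapsto V(\mu,x)$ is Borel, I would split $K$ according to $d:=\dim V(\mu,x)\in\{0,\dots,N\}$. On the piece where $d=N$ the structure of $C(\mu,x)$ together with the vanishing of $L$ in~\eqref{e:admissible} forces $f(x)=0$ for $\mu$-a.e.\ $x$, so there I contribute nothing to $g$ and take that piece of $J$ to be a compact subset of $\{f=0\}$ of almost full measure. On each piece where $d<N$ I would refine using compactness twice — the Grassmannian of $d$-planes is compact, and $\{h\in\mathrm{Lip}(B^N(0,1),0):\glip h.\le 1\}$ is compact in the sup distance by Arzel\`a--Ascoli — so that for preassigned small $\varepsilon_0,\delta>0$ the piece breaks into finitely many Borel sets, on each of which $\linfnrm\pi_x-\pi.\le\varepsilon_0$ for some fixed $d$-plane $\pi$ and all the values $f(x)$ lie within $\delta$ of one another. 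Picking a base point $x_j$ in such a set and letting $h^{(j)}$ be the profile $f(x_j)$, transported from $V(\mu,x_j)^\perp$ to $\pi^\perp$ by orthogonal projection (a $(1+O(\varepsilon_0))$-bi-Lipschitz identification, which I rescale so that $h^{(j)}$ is exactly $1$-Lipschitz with $h^{(j)}(0)=0$ at an extra cost $O(\varepsilon_0)$), one obtains $\linfballnrm {f(x)-h^{(j)}}.\le 2\delta+O(\varepsilon_0)$ on the whole set, with $h^{(j)}$ extended to $\real^N$ as in Lemma~\ref{lem:step-1}. Shrinking each set to a compact $K_j$ absorbing the arbitrarily small residual mass, and invoking the locality property from~\cite{alberti-marchese} that $V(\mu\on E,\cdot)=V(\mu,\cdot)$ $\mu$-a.e.\ on $E$ for any Borel $E$, each $K_j$ then satisfies the hypotheses of Lemma~\ref{lem:step-3} with $N_0=d$, plane $\pi^{(j)}$ and function $h^{(j)}$; since there are finitely many compacta, $\rho:=\min_{j\ne j'}\dist(K_j,K_{j'})>0$.

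I would then run Lemma~\ref{lem:step-3} on each $K_j$ with $N_0^{(j)}<N$, with parameters $(\varepsilon\szdec^{(j)},\varepsilon\msdec^{(j)},r_0^{(j)})$ to be fixed small, obtaining a $(\sqrt 3+C_0\varepsilon_0/(\varepsilon\msdec^{(j)})^2)$-Lipschitz $g_j$ with $\linfnrm g_j.\le C_1\varepsilon\szdec^{(j)}$, a compact $J_j\subset K_j$ with $\mu(K_j\setminus J_j)\le C_1\varepsilon\msdec^{(j)}\mu(K_j)$ on which $g_j$ is asymptotically flat, and a finite partition $J_j=\bigcup_a C_a^{(j)}$ with scales $0<r_a^{(j)}\le r_0^{(j)}$ such that $\linfballnrm {\tang {x,r_a^{(j)}}.g_j-h^{(j)}}.\le C_1(\varepsilon_0+(\varepsilon\msdec^{(j)})^{1/(N-N_0^{(j)})})$ for $x\in C_a^{(j)}$. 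Because the support of $g_j$ need not lie near $K_j$, I would replace $g_j$ by $\chi_j g_j$, with $\chi_j$ a Lipschitz cutoff equal to $1$ on the $(\eta/2)$-neighbourhood of $K_j$, vanishing outside the $(2\eta/3)$-neighbourhood, with $\glip\chi_j.=O(1/\eta)$ and $\eta:=\min(\varepsilon,\rho/3,\dist(K,\Omega^c)/2)$: since $\linfnrm g_j.\le C_1\varepsilon\szdec^{(j)}$ is negligible next to $\eta$, this perturbs $\glip g_j.$ by as little as we wish, and for $r_0^{(j)}\le\eta/2$ it leaves $g_j$ unchanged on a fixed neighbourhood of $J_j$, preserving the blow-ups at points of $J_j$ and the asymptotic flatness there. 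Setting $g:=\sum_j\chi_j g_j$ and $J:=\bigcup_j J_j$, the summands have pairwise disjoint open supports inside the $\varepsilon$-tube of $K$, which gives (c); hence $\linfnrm g.=\max_j\linfnrm\chi_jg_j.\le\varepsilon$ once $\varepsilon\szdec^{(j)}\le\varepsilon/C_1$, and by an elementary gluing argument $\glip g.=\max_j\glip\chi_jg_j.$, which is $\le\sqrt 3+C_0$ with $C_0=C_0(N)$ once $\varepsilon_0\ll\min_j(\varepsilon\msdec^{(j)})^2$ and $\varepsilon\szdec^{(j)}\ll\eta$. For $x\in C_a^{(j)}\subset J_j$, disjointness and $\chi_j\equiv 1$ near $x$ give $\tang {x,r_a^{(j)}}.g=\tang {x,r_a^{(j)}}.g_j$, which is within $3C_1\varepsilon$ of $f(x)$ after choosing $\varepsilon_0,\delta,\varepsilon\msdec^{(j)}$ suitably small; on the $d=N$ piece $g$ vanishes near $J$ and $f=0$ there. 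Putting $r_1:=\min_{j,a}r_a^{(j)}$ and $r_0:=\max_{j,a}r_a^{(j)}$ yields (b); (a) follows from $\mu(K\setminus J)\le\mu(K\setminus\bigcup_j K_j)+\sum_j\mu(K_j\setminus J_j)$ by arranging $\mu(K\setminus\bigcup_j K_j)\le\varepsilon\mu(K)$ and $\varepsilon\msdec^{(j)}\le\varepsilon$; and $g$ is asymptotically flat on $J$ because near each of the finitely many mutually separated $J_j$, at small scale, $g$ coincides with the asymptotically flat $\chi_j g_j$.

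The step I expect to be the main obstacle is the control of $\glip g.$. A direct sum of the $g_j$ over the pieces — finite in number, but with no a priori bound, their count growing without limit as $\varepsilon_0\to 0$ — would have Lipschitz constant growing with the number of pieces, while Lemma~\ref{lem:step-3} only bounds each $\glip g_j.$ separately (essentially by $\sqrt 3$). The device that resolves this is exactly the two-step localization above: one first shrinks each $g_j$ to a neighbourhood of its own $K_j$ at negligible Lipschitz cost — using that $\linfnrm g_j.$ may be taken arbitrarily small — and only afterwards adds them up, so that disjointness of supports collapses $\glip g.$ to $\max_j\glip\chi_jg_j.$. A secondary, purely measure-theoretic point is that passing from $\mu$ to $\mu\on K_j$ must not alter the decomposability bundle, which is guaranteed by the locality of $V(\mu,\cdot)$ recalled in the decomposition step.
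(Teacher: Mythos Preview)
Your proposal is correct and follows essentially the same approach as the paper: decompose $K$ into finitely many compact pieces on which $\dim V(\mu,\cdot)$ is constant, the bundle is $\varepsilon_0$-close to a fixed plane, and $f$ is nearly constant; apply Lemma~\ref{lem:step-3} on each piece with parameters balanced so that $\varepsilon_0/\varepsilon\msdec^2=O(1)$ (the paper takes $\varepsilon_0=\varepsilon^{2N}$, $\varepsilon\msdec=\varepsilon^N$); then cut off each resulting $g_j^\ell$ to a neighbourhood of its piece and sum. You are in fact more explicit than the paper on two points it leaves implicit: the disjoint-supports argument showing that $\glip g.=\max_{j}\glip \chi_jg_j.$ rather than a sum (the paper just asserts the cutoffs are $(\sqrt 3+C_0)$-Lipschitz and sums without comment), and the separate treatment of the full-dimensional piece $d=N$ together with the locality of $V(\mu,\cdot)$ under restriction.
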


\begin{proof}


Let $C_0$ and $C_1$ be the constants in Lemma \ref{lem:step-3}. 
Since $f(x)\in C(\mu,x)$ for $\mu$-a.e. $x$, by the Lusin's theorem we can find at most $N+1$ disjoint compact sets $K_j\subset K$ $(j=0,\ldots, N)$ of positive measure, such that 
\begin{equation}\label{e:notte1}
\mu(K\setminus\bigcup_{j=0}^N K_j)<C_1\varepsilon\mu(K)
\end{equation}
 and $f(x)\in C(\mu,x)$ for every $x\in K_j$, for every $j$. Moreover, on each $K_j$ both $V(\mu,x)$ and $f(x)$ vary continuously in $x$ and $V(\mu,\cdot)$ has constant dimension $j$.\\


Since the Grassmannian of $j$-planes in $\R^N$ is totally bounded, and since $V(\mu,x)$ and $f(x)$ vary continuously in $x$ on each $K_j$, we can find finitely many disjoint non-empty compact subsets $K_j^\ell\subset K_j$ ($\ell=1,\ldots,k_j$) such that 

\begin{equation}\label{e:notte2}
\|V(\mu,x)-V(\mu,y)\|_\infty\leq\varepsilon^{2N}
\end{equation}
and 
\begin{equation}\label{e:notte3}
\|f(x)-f(y)\|_\infty\leq C_1\varepsilon
\end{equation}
for every pair $(x,y)$ of points in $K_j^\ell$ and moreover
\begin{equation}\label{e:notte4}
\mu(K_j\setminus \bigcup_\ell K_j^\ell)\leq\frac{C_1\varepsilon}{(N+1)}\mu(K).
\end{equation}

Choose for each $j,l$ a point $x_j^\ell$ and let $f_j^\ell:=f(x_j^\ell)\in \rm{Lip}$. Notice that, by assumption, $\glip f_j^\ell.\leq 1$. Let $$d:=\min\{1,\dist(K,(\R^N\setminus\Omega)),\min_{j,k}\{\dist(K_j^l,K_k^m):l\neq m\}\}.$$
For every $(j,\ell)$, apply Lemma \ref{lem:step-3} with $K:=K_j^\ell$, $\pi:=V(\mu,x_j^\ell)$, $h:=f_j^\ell\trace \pi^\perp$ and $\varepsilon_0:=\varepsilon^{2N},r_0:=\frac{d}{4}\varepsilon,\varepsilon\msdec:=\varepsilon^N,\varepsilon\szdec:=C_1^{-1}\varepsilon\min\{1,\frac{d}{4}\} $. 
By the lemma, for every $(j,\ell)$ there exist
a ($\sqrt{3}+C_0$)-Lipschitz function $g_j^\ell:\R^N\to\R$ and a compact set $J_j^\ell\subset K_j^\ell$ such that:
  \begin{enumerate}[label={\normalfont(\alph*)}]
  \item $g_j^\ell$ is asymptotically flat on $J^\ell_j$,
    $\linfnrm g^\ell_j.\le \varepsilon\min\{1,\frac{d}{4}\}$ and $\mu(K_j^a\setminus
    J_j^\ell)\le C_1\varepsilon^N(K_j^\ell)$.
  \item One can write every $J_j^\ell$ as a finite disjoint union
    $J_j^\ell = \bigcup_{a=1}^{M}J_{j,a}^\ell$ and
    for each $a\in{1,\cdots, M}$ ($M$ may depend on $j$ and $\ell$) there is an $0<r_a:=r_a(\ell,j)\le r_0$ such that if $x\in
    J_{j,a}^\ell$ one has:
    \begin{equation}
      \label{eq:apprbu}
      \linfballnrm {\tang.g^\ell_j-f}.\le 2C_1\varepsilon.
    \end{equation}
  \end{enumerate}
Via a simple cut-off, we can modify each $g_j^\ell$ to a ($\sqrt{3}+C_0$)-Lipschitz function $\bar g_j^\ell$ supported on $\Omega$ such that
$$\bar g_j^\ell=g_j^\ell \quad \text {on }\left\{x: \dist(x,K_j^\ell)<\frac{d}{4}\varepsilon\right\}$$
and
$$\bar g_j^\ell=0 \quad \text {on }\left\{x: \dist(x,K_j^\ell)>\frac{d}{2}\varepsilon\right\}$$

Finally we define the function $g:\Omega\to\R$ by
$$g:=\sum_{j,\ell}\bar g_j^\ell$$
and we observe that, by (a), $\|g\|_{\infty}\leq\varepsilon$.
Denoting $J:=\bigcup_{j,\ell}{J^\ell_j}$, by \eqref{e:notte1}, \eqref{e:notte4} and (a), we get 
$$\mu(K\setminus J)\leq 3C_1\varepsilon\mu(K).$$
Moreover, denoting $r_1:=\min_{j,\ell,a}\{r_a(\ell,j)\}$ and setting for every $x\in J_{j,a}^{\ell}$ $r(x):=r_a(\ell,j)$, we get by \eqref{e:notte3} and (b) that it holds $0<r_1\leq r(x)\leq r_0\leq\varepsilon$ and
    \begin{equation}
      \label{eq:apprbuBIS}
      \linfballnrm {\tang {x,r(x)}. g-f(x)}.\le 3C_1\varepsilon,
    \end{equation}
where we observe that by the choice of $r_0$, it holds $\tang {x,r(x)}. g=(g_j^\ell)_{x,r(x)}$, for every $x\in J_j^\ell$. 
\end{proof}

\begin{proof}[proof of Theorem \ref{main}(II)]

\makestep{1}{Prescribing the linear part}
Fix $\varepsilon>0$. It is not restrictive to assume that $\mu(\Omega)=1$. Let $K$ be a compact set such that $\mu(\Omega\setminus K)<\varepsilon/4$, $f(x)\in C(\mu,x)$, and $\glip f(x).\le D$, for every $x\in K$ and for some $D>0$. Without loss of generality we can assume that $D=1$. Firstly, for every $x\in K$ we extend the linear function $L$ which $f(x)$ defines on $V(\mu,x)$ (see \eqref{e:admissible}) to a linear function $\tilde L$ defined on $\R^N=(V(\mu,x),V(\mu,x)^{\perp})$ as 
$$\tilde L(x,y):=L(x).$$ 


Then we take any Borel measurable extension $\bar L$ of $\tilde L$ defined on the set $\Omega$ and preserving the bound $\glip \bar L.\leq 1$.
By Theorem \ref{mainI} applied to $f=\bar L$ and $\zeta=1$, we can find a compact set $K^0\subset K$ and a function $g_0\in C^1_c(\Omega)$ with $\|g_0\|_{\infty}\leq C$ such that 
$$\mu(\Omega\setminus K^0)<\varepsilon/2$$
$$Dg_0(x)=\bar L(x)=\tilde L, \quad \text{for every } x\in K^0$$
and $\glip g_0.\leq C$.
\\

\makestep{2}{Prescribing the non-linear part}
Consider the function $f_0:\Omega\to\Lip$ such that $f_0(x)\equiv 0$ for every $x\in\Omega\setminus K$ and
$f_0(x)=f(x)-\tilde L(x)$ for every $x\in K$.
We will apply Lemma \ref{lemmafinale} to a sequence of sets $(K_i)_{i\in\N}$ (with $K_0:=K$), the map $f_0$ and a sequence of parameters $(\varepsilon_i)_{i\in\N}$ with $\varepsilon_i\to 0$ and we will obtain respectively functions $g^i$, compact sets $J_i=:K_{i+1}$, and for every $x\in J_i$ a radius $r_1^i\leq r^i(x)\leq r^i_0\leq\varepsilon_i$.

Since we can choose the $\varepsilon_i$ inductively, we can assume that for every $i$ it holds
$$\varepsilon_i r_1^i\geq \sum_{j>i}\varepsilon_j $$
so that, for every $i$ it holds, for every $x\in J_i$
\begin{equation}
      \label{eq:non destroy1}
      \begin{split}
      &\linfballnrm {\tang x,r^i(x).(g^i+\sum_{j>i}g^j)-f_0(x)}.\\
      &\le\linfballnrm {\tang {x,r^i(x)} . g^i - f_0(x)}.\mskip 8mu+
      \linfballnrm {\tang x,r^i(x).\sum_{j>i}g^j}.\\
      &\le 3C_1\varepsilon^i + \frac{\sum_{j>i}\varepsilon_j
      }{r^i(x)} \leq (3C_1+1)\varepsilon_i. 
      \end{split}
    \end{equation}
Moreover, since the $g^j$'s are asymptotically flat on the $J_j$'s, we can add the further restriction on the inductive choice of the $\varepsilon_i$'s that, for every $j$ and for every $i>j$, $g_j$ is $(\varepsilon_j)^i$-Lipschitz on a tubular neighbourhood of $J_j$ of radius $\varepsilon_i$, below scale $\varepsilon_i$. Hence, since $r^i(x)\leq r_0^i\leq\varepsilon_i$, we have, for every $x\in J_i$
\begin{equation}
      \label{eq:non destroy2}
      \begin{split}
      &\linfballnrm {\tang x,r^i(x).(g^i+\sum_{j<i}g^j)-f_0(x)}.\\
      &\le\linfballnrm {\tang {x,r^i(x)}.g^i-f_0(x)}.\mskip 8mu+
      \linfballnrm {\tang x,r^i(x).\sum_{j<i}g^j}.\\
      &\le\linfballnrm {\tang {x,r^i(x)}.g^i-f_0(x)}.+
      \sum_{j<i}\glip g^j\trace B_{r^i(x)}(x).\\
      &\le 3C_1\varepsilon_i + \sum_{j<i}(\varepsilon_j)^i. 
      \end{split}
    \end{equation}

Denote $g_1:=\sum_{i\in\N} g^i$ and $J:=\cap_i K_i$. Combining \eqref{eq:non destroy1} and \eqref{eq:non destroy2} we have that, provided $(\varepsilon_i)_{i\in\N}$ respects the choices made above and provided $\sum_j(\varepsilon_j)^i\to 0$ as $i\to\infty$, and $\sum_j(\varepsilon_j)\leq\varepsilon/2$ it holds that $\mu(\Omega\setminus J)\leq \varepsilon/2$ and moreover, for every $x\in J$,
$$ \linfballnrm {\tang x,r^i(x).g_1-f_0(x)}.\to 0 \quad \text{ as } i\to\infty.$$

\makestep{3}{Lipschitz estimates} 
It remains to show that $g_1$ is Lipschitz. Let $x,y\in\Omega$ with $x\neq y$. Firstly observe that if $|x-y|\geq \frac{1}{2}\sum_i\varepsilon_i$, then the estimate is very simple, indeed
\begin{equation}\label{e:primastima} 
|g_1(y)-g_1(x)|\leq 2\sum_{i=1}^{\infty}\|g^i\|_\infty\leq 2\sum_i\varepsilon_i\leq 4|x-y|.
\end{equation}

Otherwise let us consider different cases. Here we make the following assumption on the sequence $(\varepsilon_j)_{j\in\N}$: for every $j$ it holds $\varepsilon_j\geq 2\varepsilon_k$ for every $k>j$, hence in particular $\varepsilon_j\geq \sum_{k>j}\varepsilon_k$.
\begin{itemize}
%

\item if either $x\in J$ or $y\in J$. Let $j_0$ be the first index $j$ such that $|y-x|\geq\varepsilon_j$. In particular $x$ and $y$ are in $B_{\varepsilon_{j_0-1}}(J^{j_0-1})$. Since we know that for every $j<j_0-1$, the function $g^j$ is $(\varepsilon_j)^{j_0-1}$-Lipschitz on the tubular neighbourhood of $B_{\varepsilon_{j_0-1}}(J^j)$, below scale $\varepsilon_{j_0-1}$, this implies that $\sum_{j=1}^{j_0-2}g^j$ is $(\sum_{j=1}^{j_0-2}(\varepsilon_j)^{j_0-1})$-Lipschitz on $B_{\varepsilon_{j_0-1}}(J^{j_0-1})$ below scale $\varepsilon_{j_0-1}$ and in particular $|(\sum_{j=1}^{j_0-2}g^j)(y)-(\sum_{j=1}^{j_0-2}g^j)(x)|\leq |y-x|$. Moreover, since $\varepsilon_{j_0}\geq\sum_{k>j_0}\varepsilon_k$, it holds $|y-x|\geq\frac{1}{2}\sum_{j=j_0}^{\infty}\varepsilon_j$. Hence we can write
\begin{equation}\label{missing1}
\begin{split}
|g_1(y)-g_1(x)|&\leq|(\sum_{j=1}^{j_0-2}g^j)(y)-(\sum_{j=1}^{j_0-2}g^j)(x)|\\
&\mskip 8mu+|g^{j_0-1}(y)-g^{j_0-1}(x)|+|(\sum_{j=j_0}^{\infty}g^j)(y)-(\sum_{j=j_0}^{\infty}g^j)(x)|\\
&\leq (1+\sqrt 3+C_0)|y-x|+2\sum_{j=j_0}^{\infty}\varepsilon_j\leq (5+\sqrt 3+C_0)|y-x|.
\end{split}
\end{equation}
\item If $x\not\in J$ and $y\not\in J$, let $i_0$ (respectively $j_0$) be the first index $i$ such that $x\not\in B_{\varepsilon_i}(J^i)$
(respectively $y\not\in B_{\varepsilon_i}(J^i)$). We can assume, without loss of generality, that $i_0\leq j_0$. 

If $|x-y|<\varepsilon_{i_0+1}$, then necessarily $j_0-i_0\leq 1$. In
this case (recalling (c) in Lemma \ref{lemmafinale}) $g^j(x)=g^j(y)=0$
for every $j\geq i_0+1$. Moreover for every $i<i_0-1$, the function
$g^i$ is $(\varepsilon_i)^{i_0-1}$-Lipschitz on the tubular
neighbourhood of $B_{\varepsilon_{i_0-1}}(J^i)$, below scale
$\varepsilon_{i_0-1}$, hence, as in the previous case, we can estimate 
\begin{equation}\label{missing2}\begin{split}
|g_1(y)-g_1(x)|&\leq|(\sum_{i=1}^{i_0-2}g^i)(y)-(\sum_{i=1}^{i_0-2}g^i)(x)|+|g^{i_0-1}(y)-g^{i_0-1}(x)|
\\ &\mskip 8mu+|g^{i_0}(y)-g^{i_0}(x)|\\
&\leq \sum_{i=1}^{i_0-1}(\varepsilon_i)^{i_0}|y-x|+2(\sqrt 3+C_0)|y-x|\leq (1+2\sqrt 3+2C_0)|y-x|.
\end{split}
\end{equation}
The last case to analyse is when $|x-y|\geq\varepsilon_{i_0+1}$. In this case let $k_0\leq i_0+1$ be the first index $k$ such that $|x-y|\geq \varepsilon_k$. Note that if $k_0=1$, then we fall in the first case considered, because we have required in particular that $\varepsilon_1\geq\sum_{j>1}\varepsilon_j$, and hence \eqref{e:primastima} provides the Lipschitz estimate. Therefore we can consider only the case $k_0\geq 2$. 

Since $|x-y|\leq \varepsilon_{k_0-1}$, then for every $i<k_0-1$, the
function $g^i$ is $(\varepsilon_i)^{k_0-1}$-Lipschitz on the tubular
neighbourhood of $B_{\varepsilon_{k_0-1}}(J^i)$, below scale
$\varepsilon_{k_0-1}$, hence
\begin{equation}\label{missing3}\begin{split}
|g_1(y)-g_1(x)|&\leq|(\sum_{j=1}^{k_0-2}g^j)(y)-(\sum_{j=1}^{k_0-2}g^j)(x)|+|g^{k_0-1}(y)-g^{k_0-1}(x)|
\\ &\mskip 8mu+|g^{k_0}(y)-g^{k_0}(x)|+2\sum_{j=k_0+1}^{\infty}\varepsilon_j \\
&\leq(\sum_{j=1}^{k_0-2}(\varepsilon_j)^{k_0-1}+2\sqrt 3
+2C_0)|y-x|+2\varepsilon_{k_0}\\
&\leq(4+2\sqrt 3 +2C_0)|y-x|.
\end{split}
\end{equation}

%
%
%
%
\end{itemize}

\makestep{4}{Conclusion of the proof}
Consider the Lipschitz function $g:=g_0+g_1$. It is easy to see that it holds
$$\mu(\{x\in\Omega:f(x)\not\subset \Tan(g,x)\})\leq \mu(\Omega\setminus(K^0\cap J))<\varepsilon.$$
Indeed $d_{V(\mu,x)}g_1=0$ on $J$ and $d_{V^{\perp}(\mu,x)}g_0=0$ on $K^0$.
Hence, since $\varepsilon$ can be chosen arbitrarily small, $f$ prescribes the blowups of a Lipschitz function weakly in the Lusin sense.

\end{proof}
\begin{remark}\rm{
As we will show in the proof of Theorem \ref{main1} (II), once it is possible to prescribe weakly (in the Lusin sense) a blowup in a closed class of admissible functions, it is also possible to prescribe all the admissible blowups ``simultaneously''. By this we mean that it is possible to find a Lipschitz function attaining all admissible blowups on an arbitrarily large set of points. In the previous proof it is sufficient to select (in a measurable way) a countable dense set of admissible blowups $\{g_i(x)\}$ for every point $x$, and, selecting in a suitable way different blowups at different scales, one can build a function $f$ attaining at many points all the $g_i$'s as blowups. To conclude, it is sufficient to observe that the set of all blowups at one point is closed.}
\end{remark}
\section{Optmality of the class $C(\mu,\cdot)$}\label{s6}
We now give an example of a measure for which one cannot prescribe
more blowups than those contained in $C(\mu,\cdot)$. In general it
seems a hard problem to characterize the largest set of blowups one can prescribe in terms of structural properties of the Radon measure.\\
\def\tang#1.{\setbox1=\hbox{$#1$\unskip}
{\normalfont\text{T}}_{\ifdim\wd1>0pt #1\else x,r\fi}}

Given a Radon measure $\mu$ on $\R^N$, $r>0$ and a point $x$ we define
the measure $\tang .\mu$ on $B$ by
$$\tang.\mu(A):=\mu(x+rA), \;{\rm{for\,every\;Borel\;set\;}}A\subset B.$$ 
We denote by $\Tan(\mu,x)$ the set of the \emph{blowups} of $\mu$ at $x$, i.e. all the possible limits of the form
$$\lim_{r_i\searrow 0}\kappa_i$$
where
\begin{equation}\label{eqbu}
\kappa_i:=\frac{\tang x,r_i.\mu}{\mu(B(x,r_i))}.
\end{equation}

Fix $k\in\{1,\dots,N-1\}$ and let $\nu_\R$ be a (doubling) Radon measure
on $\R$ such that $\nu_\R$ is singular with respect to the Lebesgue
measure, its support is $\R$ and for $\nu_\R$-a.e $x\in\R$ the set $\Tan(\nu_\R,x)$ contains only the Lebesgue measure. Examples of such measures are discussed in \cite{preiss-meageo} or can be obtained modifying the example of \cite{schul-doubling-conc}. 

Let $\mu$ be the product measure 
$$\mu:=\Leb^k\otimes \underbrace{\nu_\R\otimes\ldots\otimes\nu_\R}_{(N-k)-times}$$
and consider a Lipschitz function $f$ on
$\R^N=\R^k\times\R^{N-k}$. The decomposability bundle $V(\mu,\cdot)$
coincides with $\R^k$ as $\mu$, by the properties of $\nu_\R$, is
concentrated on a set which intersects each $C^1$-curve $\gamma$ whose tangent vector does not
lie in $\R^k$ in a set of zero $1$-dimensional Hausdorff measure, and
thus we have a well defined derivative $d_{\R^k}f$ in the direction of $\R^k$. Observe that $\mu$ is also doubling and fix a point $P$ of approximate continuity for $d_{\R^k}f$. We can assume that at $P$ all blowups of $\mu$ are positive multiples of $\Leb^N$.

Let $g$ be a blowup of $f$ at $P$ and let $(r_i)_{i\in\N}$ be a
sequence of radii for which $g=\lim_{i\to\infty} \tang P,r_i. f$. Let $\tilde\mu$ be the limit of any converging subsequence of $\kappa_i$, defined in \eqref{eqbu} with $x=P$. Since the support of $\tilde \mu$ is the whole ball $B$ and since $d_{\R^k}f$ is approximately continuous at $P$, for every $q\in\R^N$ and $v\in\R^k$ such that $q+v\in B$ we get
$$g(q+v)-g(q)=\langle d_{\R^k}f(P),v\rangle.$$
Let $m$ be the (Lipschitz) restriction of $g$ to $\R^{N-k}\cap B$. Then for every $x\in\R^k, y\in\R^{N-k}$ such that $x+y\in B$ it holds
$$g(x,y)=m(y)+\langle d_{R^k}f(P),x\rangle,$$
hence $g\in C(\mu,P)$.

\section{Proof of theorem \ref{main1}}\label{s4}
As we already observed in point (iii) of Remark \ref{rmk1}, statement (I) of Theorem \ref{main1} is contained in Proposition 4.2 of \cite{Ma}. Regarding statement (II), we will prove a stronger (perhaps surprising) statement: namely we will prove that if $\mu$ is singular, then the generic $1$- Lipschitz function (in the sense of Baire categories) attains every 1-Lipschitz function as blowup at $\mu$-almost every point.

In this section we denote by $X$ the complete metric space of 1-Lipschitz functions on $\R$ endowed with the supremum norm. By $\mu$ we denote a singular probability measure on $\R$. We begin with the following lemma. 

\begin{lem}[Covering by intervals with non-negligible centres]
\label{cor:large_centers}
Let $U\subset\R$ be an open set. For every $r_0>0$ and $n\in\N$ there is a sequence of closed intervals $\{[x_j-r_j, x_j+r_j] \}_j$ contained in $U$ with disjoint interiors such that:
\begin{align}
  \label{eq:large_centers_cs1}
  r_j&\le r_0\\  \label{eq:large_centers_cs2}
 \mu(\bigcup_j[x_j-(8n)^{-1}r_j, x_j+(8n)^{-1}r_j])&\geq \frac{1}{16}n^{-1}\mu(\bigcup_j[x_j-r_j, x_j+r_j])\\  \label{eq:large_centers_cs3}
 \mu(U\setminus\bigcup_j[x_j-r_j, x_j+r_j])&=0.
\end{align}
\end{lem}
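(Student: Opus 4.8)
The plan is to derive the statement from a single‑scale selection lemma by a Vitali‑type exhaustion, in analogy with the passage from Lemma~\ref{lem:neg_frames} to Corollary~\ref{cor:neg_frames}. In dimension one, however, the selection lemma is much softer: the role played in Lemma~\ref{lem:neg_frames} by the frame estimate is here taken over by the trivial remark that the center $[x-(8n)^{-1}r,\,x+(8n)^{-1}r]$ occupies a fixed fraction $(8n)^{-1}$ of the length of $[x-r,\,x+r]$, so that a randomly translated grid of intervals of length $2r$ captures in its centers, on average, a $(8n)^{-1}$‑fraction of the mass of $\mu$.

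\emph{Single‑scale lemma.} First I would prove: if $W\subset\R$ is open with $0<\mu(W)<\infty$ and $r_0>0$, then there are $r\le r_0$ and $\omega\in\R$ for which the family $\mathcal F=\{I\in\mathcal G_{r,\omega}:I\subset W\}$ — where $\mathcal G_{r,\omega}=\{[2rk+\omega,2r(k+1)+\omega]:k\in\Z\}$, each $I=[2rk+\omega,2r(k+1)+\omega]$ is written as $[x_I-r,x_I+r]$, and $\mathrm{cen}(I):=[x_I-(8n)^{-1}r,x_I+(8n)^{-1}r]$ — satisfies: (i) each member has radius $r\le r_0$; (ii) $\sum_{I\in\mathcal F}\mu(\mathrm{cen}(I))\ge(16n)^{-1}\mu(W)$; (iii) $\mu(W\setminus\bigcup\mathcal F)\le\mu(W)/2$; and moreover $\mathcal F$ is countable, its members have pairwise disjoint interiors, and $W\setminus\bigcup\mathcal F$ is open. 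To prove this, set $W'=\{x\in W:\dist(x,W^c)>2r\}$ (depending on $r$); since $W'\uparrow W$ as $r\downarrow0$ and $\mu(W)<\infty$, I may fix $r\le r_0$ with $\mu(W\setminus W')\le\mu(W)/2$, i.e. $\mu(W')\ge\mu(W)/2$. For fixed $x\in W'$, as $\omega$ ranges over $[0,2r)$ the point $x$ lies in a unique cell $I_\omega(x)$ of $\mathcal G_{r,\omega}$ (up to a Lebesgue‑null set of $\omega$), and an elementary computation shows that $\{\omega\in[0,2r):x\in\mathrm{cen}(I_\omega(x))\}$ has $\mathcal L^1$‑measure $(4n)^{-1}r$, hence relative measure $(8n)^{-1}$ in $[0,2r)$. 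By Tonelli's theorem $\int_0^{2r}(2r)^{-1}\big(\sum_{I\in\mathcal G_{r,\omega}}\mu(\mathrm{cen}(I)\cap W')\big)\,d\omega=(8n)^{-1}\mu(W')$, so for some $\omega$ one has $\sum_{I\in\mathcal G_{r,\omega}}\mu(\mathrm{cen}(I)\cap W')\ge(8n)^{-1}\mu(W')$. For such an $\omega$: if $\mathrm{cen}(I)$ meets $W'$ at a point $y$, then for every $z\in I$, using $\diam I=2r$ and $\dist(y,W^c)>2r$, one gets $\dist(z,W^c)\ge\dist(y,W^c)-|z-y|>0$, hence $I\subset W$ and $I\in\mathcal F$. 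Therefore $\sum_{I\in\mathcal F}\mu(\mathrm{cen}(I))\ge\sum_{I\in\mathcal G_{r,\omega}}\mu(\mathrm{cen}(I)\cap W')\ge(8n)^{-1}\mu(W')\ge(16n)^{-1}\mu(W)$, which is (ii); (iii) follows from $W\setminus\bigcup\mathcal F\subset W\setminus W'$; and $W\setminus\bigcup\mathcal F$ is open because inside each connected component of $W$ the selected cells form a contiguous block whose union is a closed subinterval, so the remainder inside that component is a union of at most two open intervals.

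\emph{Exhaustion.} Next I would iterate: put $W_0:=U$ and, as long as $\mu(W_m)>0$, apply the single‑scale lemma to $W_m$ to obtain $\mathcal F_{m+1}$ and set $W_{m+1}:=W_m\setminus\bigcup\mathcal F_{m+1}$, which is open with $\mu(W_{m+1})\le\mu(W_m)/2$; if $\mu(W_m)=0$ take all subsequent $\mathcal F$'s empty. Then $\mu(W_m)\le 2^{-m}\mu(U)$. Let $\mathcal F=\bigcup_{m\ge1}\mathcal F_m=\{I_j\}_j$ with $I_j=[x_j-r_j,x_j+r_j]$. It is a countable family, each $r_j\le r_0$ (this is \eqref{eq:large_centers_cs1}), with pairwise disjoint interiors: within one stage the cells come from a single grid, and $\mathcal F_{m+1}\subset W_m=U\setminus\bigcup_{k\le m}\bigcup\mathcal F_k$ separates different stages. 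Moreover $U\setminus\bigcup_j I_j=\bigcap_m W_m$, so $\mu(U\setminus\bigcup_j I_j)=\lim_m\mu(W_m)=0$, which is \eqref{eq:large_centers_cs3}; in particular $\mu(\bigcup_j I_j)=\mu(U)$. Finally each $\mathrm{cen}(I_j)$ lies in the open interval $(x_j-r_j,x_j+r_j)$ and these are pairwise disjoint, so the $\mathrm{cen}(I_j)$ are pairwise disjoint; hence, using (ii) (the contribution of the single stage $m=0$ already suffices), $\mu\big(\bigcup_j[x_j-(8n)^{-1}r_j,x_j+(8n)^{-1}r_j]\big)=\sum_j\mu(\mathrm{cen}(I_j))\ge(16n)^{-1}\mu(U)=(16n)^{-1}\mu(\bigcup_j I_j)$, which is \eqref{eq:large_centers_cs2}. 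If $\mu(U)=0$ the empty family trivially satisfies all three conditions.

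\emph{Main difficulty.} There is no real obstacle. The two points needing some care are the bookkeeping guaranteeing that the intervals exhaust $U$ up to a $\mu$‑null set despite the use of infinitely many scales — handled by the geometric decay $\mu(W_m)\le 2^{-m}\mu(U)$, which in turn requires choosing $r$ small enough at each stage so that the boundary strip $W\setminus W'$ carries at most half the mass — and the (elementary) Fubini computation giving the precise constant $(8n)^{-1}$ for the average fraction of mass retained in the cores of a random grid.
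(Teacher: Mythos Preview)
Your argument is correct and in fact cleaner than the paper's. Both proofs rest on the same averaging idea over translations of a grid, but the paper proceeds in two layers: it first invokes Corollary~\ref{cor:neg_frames} (itself obtained by a translation-averaging argument) to get a covering by intervals with \emph{thin frames}, then inside each such interval performs a secondary pigeonhole over $8n$ shifts of a finer subdivision, finally adding two boundary subintervals to restore the full covering. You bypass the ``thin frame'' machinery entirely: at each stage you work with a single grid scale, choose the scale small enough that the $2r$-inner part $W'$ carries half the mass, and apply Tonelli directly to find a translate whose cell-centers capture at least $(8n)^{-1}\mu(W')\ge(16n)^{-1}\mu(W)$; the Vitali-type exhaustion with geometric decay $\mu(W_m)\le 2^{-m}\mu(U)$ then does the rest. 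Your route is more self-contained and avoids the somewhat awkward ``add two boundary intervals'' patching step; the paper's route has the advantage of reusing Corollary~\ref{cor:neg_frames}, which was already needed for Theorem~\ref{mainI}. One small point: your single-scale lemma assumes $\mu(W)<\infty$, so the exhaustion as written starts from $\mu(U)<\infty$; this is harmless here since in Section~\ref{s4} the measure is a probability, but you might mention it.
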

\begin{proof}
Given any measure $\mu$ on $\R$ and $0<\lambda<1$, it is known that for $\mu$-a.e. $x$ there holds
$$\limsup_{r\to 0}\frac{\mu(\overline{B(x,\lambda r)})}{\mu(\overline{B(x,r)})}\geq \lambda.$$
The lemma then follows from Besicovitch Covering Theorem. We thank the referee for suggesting this short proof. We also show a constructive proof, in order to help the reader to understand the ideas behind Corollary \ref{cor:good_rect}.

Through the proof the closed interval $[x-r,x+r]$ will be denoted by $I(x,r)$. Firstly we apply Corollary \ref{cor:neg_frames} with $\varepsilon=2^{-6}$ obtaining a sequence of disjoint intervals $\{I(z_\lambda,r_\lambda)\}_\lambda$. 
By the choice of $\varepsilon$, for every $\lambda$ it holds that 
\begin{equation}\label{e:fava1}
\mu (I(z_\lambda,(1-2^{-6})r_\lambda)\geq\frac{1}{2}\mu(I(z_\lambda, r_\lambda)).
\end{equation}
Now we ``split'' each interval $I(z_\lambda, (1-2^{-6})r_\lambda), $ into $2^7-2$ sub-intervals
$$\{I_\lambda^i:=I(z_\lambda^i, 2^{-7}r_\lambda)\}_{i=1}^{2^7-2}.$$ 
with disjoint interiors and length $2^{-6}r_\lambda$. Denote by $\bar I_\lambda^i$ the ``central part'' of $I_\lambda^i$, i.e.
$$\bar I_\lambda^i:=I(z_\lambda^i, 2^{-10}n^{-1}r_\lambda).$$
Observe that, for every $\lambda$, the family
$$\left\{\bigcup_{i=1}^{2^7-2}\bar I_\lambda^i+j2^{-9}n^{-1}r_\lambda\right\}_{j=0,\ldots,8n-1}$$
covers the set $I(z_\lambda, (1-2^{-6})r_\lambda)$.
Hence for at least one index $j_0$, the set 
\begin{equation}\label{e:fava2}
\bigcup_{i=1}^{2^7-2}I_\lambda^i+j_02^{-9}n^{-1}r_\lambda
\end{equation}
satisfies
$$\mu(\bigcup_{i=1}^{2^7-2}\bar I_\lambda^i+j_02^{-9}n^{-1}r_\lambda)\geq \frac{1}{8}n^{-1}\mu(I(z_\lambda, (1-2^{-6})r_\lambda))\stackrel{\eqref{e:fava1}}{\geq} \frac{1}{16}n^{-1}\mu(I(z_\lambda, r_\lambda)).$$
Moreover, for every $j=0,\ldots,8n-1$ and every $i=1,\ldots,2^7-2$ the interval $I_\lambda^i+j2^{-9}n^{-1}r_\lambda$ is contained in the interior of $I(z_\lambda, r_\lambda)$
The result follows by adding to these intervals the two intervals 
$$[z_\lambda-r_\lambda, a] \quad\text{and}\quad [b, z_\lambda+r_\lambda],$$
where $a$ and $b$ are respectively the minimum and the maximum of the set in \eqref{e:fava2}.
Of course the procedure above should be also repeated for every $\lambda$.

\end{proof}

\begin{prop}\label{p:baire}
Let $\mu$ be a singular probability measure on $\R$. Let $f:[-1,1]\to\R$ be a 1-Lipschitz function with $f(0)=0$. Then the set $$X_f:=\{g\in X: f\in \Tan(g,x)\; {\rm{for\;\mu-almost\;every\;}}x\}$$
is residual in $X$ (i.e. it contains the intersection of countably many open and dense sets).
\end{prop}
\begin{proof}
For $n\in\N$ and $g\in X$, consider the set
$$E^n_g:=\{x\in\R:\exists\; \rho< n^{-1}\; {\rm{s.t.}}\;|f-\tang {x,\rho}. g|<n^{-1}\}.$$
First of all we notice that $E^n_g$ is open. Indeed if $x\in E^n_g$, $\rho\leq n^{-1}$ satisfies $|f-\tang {x,\rho}. g|<n^{-1}$ and $y\in\R$ is so that 
$$2|y-x|< \rho(n^{-1}-|f-\tang {x,\rho}. g|),$$ then, using that $g$ is 1-Lipschitz, we deduce 
$$|f-\tang {y,\rho}. g|\leq |f-\tang {x,\rho}. g|+|\tang {x,\rho}. g-\tang {y,\rho}. g|\leq |f-\tang {x,\rho}. g|+2\rho^{-1}|y-x|<n^{-1},$$
hence $y\in E^n_g$. Now we define
$$A_n:=\{g\in X: \mu(E^n_g)> 1-n^{-1}\}.$$\\

\makestep{1}{$A_n$ is open}
Fix $g\in A_n$ and consider the multifunction $\varrho:E^n_g\to 2^{(0,n^{-1})}$ defined by 
$$x\mapsto\{\rho\in(0,n^{-1})\;{\rm{s.t.}}\;|f-\tang {x,\rho}. g|<n^{-1}\}.$$ Notice that the values of $\varrho$ are non-empty open sets because the function $(x,\rho)\mapsto |f-\tang {x,\rho}. g|$ is continuous in the variable $\rho$. Moreover, since the function is continuous also in the variable $x$, for $\delta>0$ the sets 
\begin{multline*}
U_\delta:=\{x\in E_g^n: \rho_0(x):=\sup\{\varrho(x)\}>\delta\text{ and there exists }\rho(x)\text{ s.t. }\\
\delta<\rho(x)\in\varrho(x) \text{ and } |f-\tang {x,\rho(x)}. g|<n^{-1}-\delta\}
\end{multline*}
are open and $\bigcup_{\delta>0}U_\delta=E_g^n$. Moreover $\mu(E_g^n)>1-n^{-1}$, since $g\in A_n$. Then there exists $\delta>0$, with $\mu(U_\delta)>1-n^{-1}$. 

If we consider now $h\in X$ such that $2|g-h|<\delta^2$, we deduce that for every $x\in U_\delta$ it holds
\begin{multline*}
|f-\tang x,\rho(x).h|\leq |f-\tang {x,\rho(x)}. g|+|\tang {x,\rho(x)}. g-\tang {x,\rho(x)}. h|\\
\leq |f-\tang {x,\rho(x)}. g|+2|g-h|\delta^{-1}< (n^{-1}-\delta)+\delta.
\end{multline*}
The last inequality guarantees that $A_n$ is open.\\

\makestep{2}{$A_n$ is dense} Let $g\in X$ and fix $\varepsilon>0$. We want to show that there exists $h\in A_n$ such that $|h-g|\leq\varepsilon$. 

Consider inductively a sequence of functions $h_i$ defined as follows. Let $h_0:=g$, $M_0:=\R$, $\alpha_0:=\varepsilon$ and for $i=1,2,\ldots$ let $U_i\subset M_{i-1}$ be an open set such that 
\begin{equation}\label{e:bound1}
\Leb^1(U_i)\leq\frac{\alpha_{i-1}}{16n}
\end{equation} 
and 
\begin{equation}\label{e:bound2}
\mu(M_{i-1}\setminus U_i)<\frac{1}{n2^{i+2}}.
\end{equation} 
Moreover by Corollary \ref{cor:large_centers} we can select $I^i_1,\ldots, I^i_{m(i)}\subset U_i$ closed intervals with center $x^i_j$, length $4\ell^i_j$ and disjoint interiors such that firstly
\begin{equation}\label{e:bound3}
\mu(U_i\setminus\bigcup_{j=1}^{m(i)}I^i_j)<\frac{1}{n2^{i+2}}
\end{equation}
and secondly, denoting $\bar{I}^i_j$ the closed interval with center $x^i_j$ and length $(2n)^{-1}\ell^i_j$,
\begin{equation}\label{e:bound4}
\mu(\bigcup_{j=1}^{m(i)}\bar{I}^i_j)\geq (16n)^{-1}\mu(\bigcup_{j=1}^{m(i)}I^i_j),\quad {\rm{for\; every}}\; i.
\end{equation}

Or first aim is to perturb $h_{i-1}$ obtaining a new function $h_i$ such that all the points in $\bigcup_{j=1}^{m(i)}\bar I^i_j$ belong to $E^n_{h_i}$. Let $f_i$ be the 1-Lipschitz function
$$f_i(x):=h_{i-1}(x-|(-\infty,x)\cap\bigcup_{j=1}^{m(i)}I^i_j|).$$
Observe that $f_i$ is differentiable with $f_i'\equiv 0$ on the set $\bigcup_{j=1}^{m(i)}I^i_j$.
Denote by $k_i$ the 1-Lipschitz function
$$k_i(x):=f_i(x)+\sum_{j=1}^{m(i)} f_i^j(x),$$
where $f_i^j:\R\to\R$ is any 1-Lipschitz function such that $f_i^j\equiv 0$ on $\R\setminus I^i_j$ and
\begin{equation}\label{e_fix_bu}
\tang {x^i_j,\ell^i_j}.f_i^j=f
\end{equation}
(observe that such a function exists because $f$ is 1-Lipschitz, $g(0)=0$ and the length of the interval $I^i_j$ is $4\ell^i_j$). Eventually we define the 1-Lipschitz function $h_i:=f_i+k_i$.

Denote, for every $i$ 
$$\alpha_i:=\min_{j=1,\ldots,m(i)}\{\ell^i_j\};\quad M_i:=(\bigcup_{j=1}^{m(i)}I^i_j)\setminus(\bigcup_{j=1}^{m(i)}\bar{I}^i_j).$$

Note that the following properties hold, for every $i,j$
\begin{itemize}
\item[(i)] $|h_i-h_{i-1}|\leq 2\Leb^1(U^i)\stackrel{\eqref{e:bound1}}{\leq}\frac{\alpha_{i-1}}{8n}$,
\item[(ii)] $|\tang {x,\ell^i_j}. h_i-f|<(2n)^{-1}$, for every $x\in\bar{I}^i_j$.
\end{itemize}

Comparing (ii) with the definition of $E_g^n$, it is evident not only
that every $x\in\bar{I}^i_j$ belongs to $E_{h_i}^n$, but also that
there is still ``room'' for some additional perturbation. Namely for
every function $\tilde h$ with $|h_i-\tilde h|<(4n)^{-1}\alpha_i$ it
holds that every $x\in\bar{I}^i_j$ belongs to $E_{\tilde h}^n$, indeed
\begin{equation}\label{andreas-crap1}\begin{split}
|\tang {x,\ell^i_j}. \tilde h-f|&\stackrel{(i)}\leq|\tang
{x,\ell^i_j}. \tilde h- \tang {x,\ell^i_j}. h_i|+|\tang {x,\ell^i_j}. h_i-f|\\
 &\stackrel{(ii)}\leq 2(\ell^i_j)^{-1}|\tilde h-h_i|+
 (2n)^{-1}<(2n)^{-1}(\ell^i_j)^{-1}\alpha_i+(2n)^{-1}\\
 &\leq n^{-1}.
 \end{split}
 \end{equation}
In particular, for every $i,j$, for every $x\in\bar{I}^i_j$ and for every $m>i$ it holds $x\in E^n_{h_m}$, since 
$$|h_i-h_m|\leq \sum_{j=i}^{m-1}\frac{\alpha_j}{8n}\stackrel{\eqref{e:bound1}}\leq\frac{1}{8n}\alpha_i\sum_{j=0}^{m-j-1}(16)^{-i}<(4n)^{-1}\alpha_i.$$

Moreover, by (i) and the choice of $\alpha_0$ it follows that $|g-h_m|<\varepsilon$, for every $m$.
Combining \eqref{e:bound2}, \eqref{e:bound3} and \eqref{e:bound4} we deduce that for $i_0$ large enough we have $$\mu(\bigcup_{i\leq i_0}(\bigcup_{j=1}^{m(i)}\bar{I}^i_j))>1-n^{-1},$$ hence denoting $h:=h_{i_0}$, we have that $h\in A_n$.\\

\makestep{3}{Conclusion of the proof}
Clearly every function which belongs to the intersection of the $A_n$'s is also in $X_f$, hence $X_f$ is a residual set and in particular, by the Baire theorem, it is dense in $X$.
\end{proof}

\begin{proof}[Proof of theorem \ref{main1}(II)]
Without loss of generality we can assume that the Lipschitz constant of $f(x)$ is bounded by 1 for $\mu$-a.e. $x$ and that $\Omega=\R$, because Proposition \ref{p:baire} also holds when $\R$ is replaced by an open subset $\Omega$ (clearly in this case the space $X$ will be replaced by the space $X^\Omega$ of 1-Lipschitz functions on $\Omega$). 

{\bf{First case.}} $\mu$ is a finite measure.
Consider the metric space $Z$ made by the 1-Lipschitz functions on $[-1,1]$ with value 0 at the origin, endowed with the supremum distance. Let $(f_i)_{i\in\N}$ be dense in $Z$. Up to rescaling, we may assume that $\mu$ is a probability measure. By Proposition \ref{p:baire} each set $X_{f_i}$ is residual in $X$, and so it is $Y:=\bigcap_i{X_{f_i}}$. This means that for all $g\in Y$ and for $\mu$-a.e. $x$, every $f_i$ belongs to $\Tan(g,x)$. The theorem is then a consequence of the simple observation that $\Tan(g,x)$ is always a closed subset of $Z$.

{\bf{Second case.}} $\mu$ is any Radon measure. Write $\R$ as a countable union of sets $E_i$, $i=1,2,\ldots$ with finite measure. Consider for every $i$ the space $Y_i$ defined above relatively to the measure $\mu\trace E_i$. Since each $Y_i$ is residual in $X$, so it is the set $Y_\infty:=\cap_i Y_i$.
\end{proof}

%
%
\bibliographystyle{alpha}
\bibliography{prescribed_blowups_biblio}

%
%

\vskip .5 cm

{\parindent = 0 pt\begin{footnotesize}

A.M.
\\
Institut f\"ur Mathematik,
Mathematisch-naturwissenschaftliche Fakult\"at,
Universit\"at Z\"urich\\
Winterthurerstrasse 190,
CH-8057 Z\"urich,
Switzerland
\\
e-mail: {\tt andrea.marchese@math.uzh.ch}

\end{footnotesize}
}
\vskip .5 cm
{\parindent = 0 pt\begin{footnotesize}

A.S.
\\
Department of Mathematics,
ETH Z\"urich\\
R\"amistrasse 101
CH-8092 Z\"urich,
Switzerland
\\
e-mail: {\tt andrea.schioppa@math.ethz.ch}

\end{footnotesize}
}

\end{document}